\newcommand{\lbl}[1]{\label{#1}}
\newtheorem{theorem}{Theorem}[section]
\newtheorem{proposition}{Proposition}[section]
\newtheorem{lemma}{Lemma}[section]
\newtheorem{remark}{Remark}[section]
\newcommand{\be}{\begin{equation}}
\newcommand{\ee}{\end{equation}}
\newcommand\bes{\begin{eqnarray}} \newcommand\ees{\end{eqnarray}}
\newcommand{\bess}{\begin{eqnarray*}}
\newcommand{\eess}{\end{eqnarray*}}
\newcommand\bedd{\bes\left\{\begin{array}{ll}\medskip}
\newcommand\eedd{\end{array}\right.\ees}
\newcommand\lm{\lambda}
\newcommand\ty{\infty}
\newcommand\dd{\displaystyle}
\begin{document}
  \pagestyle{myheadings}
\thispagestyle{empty}

\begin{center}{\Large\bf Free boundary problems for the diffusive competition system }\\[2mm]
{\Large\bf in higher dimension with sign-changing coefficients}\footnote{This work was
supported by NSFC Grant 11371113}\\[4mm]
{\Large Yonggang Zhao}\\[2mm]
Department of Mathematics, Harbin Institute of Technology, Harbin 150001, PR China, and
College of Mathematics and Information Science, Henan Normal University, Xinxiang
453007, PR China \\[3mm]
{\Large Mingxin Wang}\footnote{Corresponding author. E-mail address: mxwang@hit.edu.cn}\\[1mm]
{Natural Science Research Center, Harbin Institute of Technology, Harbin 150080, PR China}
\end{center}

\begin{quote}
\noindent{\bf Abstract.} In this article we investigate two free boundary problems for a Lotka-Volterra competition system in a higher space dimension with sign-changing coefficients. One may be viewed as describing how  two competing species invade if they occupy an initial region, the other describes the dynamical process of a new competitor invading into the habitat of a native species.  For simplicity, it is assumed that the environment is radially symmetric.  The main purpose of this article is to understand the asymptotic behavior of competing species spreading via a free boundary. We derive some sufficient conditions for species spreading success and spreading failure. Moreover, when spreading successfully,  we provide the long time behavior of solutions.

\noindent{\bf Keywords:} Free boundary problems; Diffusive competition model; Sign-changing coefficients;  Spreading and vanishing; Long time behavior.

\noindent {\bf AMS subject classifications (2010)}:
35K51, 35R35, 92B05, 35B40.
 \end{quote}

 \section{Introduction}
 \setcounter{equation}{0} {\setlength\arraycolsep{3pt}

In this article we study the evolution of positive solutions $(u(t,r),v(t,r),h(t))$, with $r=|x|$ and $x\in\mathbb{R}^N$, to the following  free boundary problems for a Lotka-Volterra type competition system in heterogeneous environment
\begin{equation}\label{1.1}
\left\{\begin{array}{lll}
 u_t-d_1\Delta u=u(a_1(r)-b_1(r)u-c_1(r)v), &t>0, \  0\leq r<h(t),\\[1mm]
 v_t-d_2\Delta v=v(a_2(r)-b_2(r)u-c_2(r)v),&t>0, \  0\leq r<h(t),\\[1mm]
 u_r(t,0)=v_r(t,0)=0,\ \ &t>0, \\[1mm]
 u=v=0,\ h'(t)=-\mu( u_r+\beta v_r), &t>0,\  r=h(t),\\[1mm]
u(0,r)=u_0(r),\ v(0,r)=v_0(r), & 0\leq r\leq h_0=h(0)
 \end{array}\right.
\end{equation}
and
\begin{equation}\label{a1}
\left\{\begin{array}{lll}
u_t-d_1\Delta u=u(a_1(r)-b_1(r)u-c_1(r)v), &t>0, \  0\leq r<h(t),\\[1mm]
v_t-d_2\Delta v=v(a_2(r)-b_2(r)u-c_2(r)v),&t>0, \  0\leq r<\infty,\\[1mm]
u_r(t,0)=v_r(t,0)=0, &t>0, \\[1mm]
h'(t)=-\mu u_r,&t>0,\  r=h(t), \\[1mm]
u(t,r)=0, &t>0,\ h(t)\leq r<\infty,\\[1mm]
u(0,r)=u_0(r), & 0\leq r\leq h_0=h(0),\\[1mm]
v(0,r)=v_0(r),&0\leq r<\infty.
 \end{array}\right.
\end{equation}
In the above two problems, $\Delta u=u_{rr}+\frac{N-1}{r}u_r$; $h_0, \mu, \beta$ and $d_i$ $(i=1,2)$ are given positive constants;  $r=h(t)$ represents the moving boundary to be determined; functions $a_i(r), b_i(r)$, $c_i(r)$  belong to $C^\gamma([0,\infty))\cap L^\infty([0,\infty))$ with $\gamma\in(0,1)$ $(i=1,2)$, and satisfy
\begin{quote}
({\bf H})\ $a_i(r)$ is positive somewhere in $[0,\infty)$, $b_i(r)$ and $c_i(r)$ are positive in $[0,\infty)$, $i=1,2$. Moreover, there exist positive constants $\underline{b}_i$, $\bar{b}_i$, $\underline{c}_i$ and $\bar{c}_i$ such that
$$\underline{b}_i=\inf_{0\leq r<\infty}b_i(r)\leq \sup_{0\leq r<\infty} b_i(r)=\bar{b}_i,\ \ \ \underline{c}_i=\inf_{0\leq r<\infty}c_i(r)\leq \sup_{0\leq r<\infty} c_i(r)=\bar{c}_i.$$
\end{quote}
The initial functions $u_0(r)$ and $v_0(r)$ correspondingly fulfill
\begin{equation}\label{1.2}
\left\{\begin{array}{ll}
u_0(r),v_0(r)\in C^2([0,h_0]),\  u_0(r),  v_0(r)>0 \ {\rm in}\  [0,h_0),\\[1mm]
u_0'(0)=v_0'(0)=u_0(h_0)=v_0(h_0)=0
\end{array}\right.
\end{equation}for problem (\ref{1.1})
and
\begin{equation}\label{a2}
\left\{\begin{array}{ll}
u_0\in C^2([0,h_0]),\ u_0'(0)=u_0(h_0)=0\ {\rm and }\  u_0(r)>0\ {\rm in}\ [0,h_0),\\[1mm]
v_0\in C^2([0,\infty))\cap L^\infty([0,\infty)),\ v_0'(0)=0\ {\rm and }\  v_0(r)>0\ {\rm in}\ [0,\infty)
\end{array}\right.
\end{equation}for problem (\ref{a1}).
We would like to point out that for some conclusions of this article,  one can relax the smoothness hypothesis on the coefficients $a_i(r), b_i(r), c_i(r)$ by only assuming that $a_i(r), b_i(r), c_i(r)\in C([0,\infty))\cap L^\infty([0,\infty))$.

From a biological point of view, problem (\ref{1.1}) may be used to describe how the two new or invasive competing species with population density $(u(t,|x|),v(t,|x|))$ invade if they initially occupy an $N$-dimensional ball $\{|x|<h_0\}$. Both species have a tendency to invade further into their new habitat. The expanding front is represented by the free boundary $\{|x|=h(t)\}$, which is proportional to the normalized population gradient at the spreading front, i.e., $h'(t)=-\mu[u_r(t,h(t))+\beta v_r(t,h(t))]$. Problem (\ref{a1}) describes the dynamical process of a new competitor invading into the habitat of a native species. The new competitor $u(t,|x|)$ initially exists in the ball $\{|x|<h_0\}$, and disperse through random diffusion over an expanding ball $\{|x|<h(t)\}$, whose invading front $\{|x|=h(t)\}$ evolves according to the free boundary condition $h'(t)=-\mu u_r(t,h(t))$. The native species $v(t,|x|)$  undergoes diffusion and growth in the entire available habitat (assumed to be $\mathbb{R}^N$ here). The coefficient functions $a_1(|x|)$ and $a_2(|x|)$ measure the intrinsic growth rates of $u(t,|x|)$ and $v(t,|x|)$, $b_1(|x|)$ and $c_2(|x|)$ represent the intraspecific and $c_1(|x|)$ and $b_2(|x|)$ the interspecific competition rates, and $d_1$ and $d_2$ are the diffusion rates.

Both problem (\ref{1.1}) and (\ref{a1}) are variations of the diffusive Lotka-Volterra type competition model, which has been studied  in detail over a bounded spatial domain or  the entire space $\mathbb{R}^N$. For instance, the dynamical behavior of the problem
\begin{eqnarray*}
\left\{\begin{array}{ll}
u_t-d_1\Delta u=u(a_1-b_1u-c_1v),&t>0,\ x\in \Omega,\\[1mm]
v_t-d_2\Delta v=v(a_2-b_2u-c_2v),&t>0,\ x\in \Omega,\\[1mm]
\frac{\partial u}{\partial n}=\frac{\partial v}{\partial n}=0, &t>0,\ x\in\partial \Omega,\\[1mm]
u(0,x)=u_0(x)>0,\ v(0,x)=v_0(x)>0,&x\in\Omega
\end{array}\right.
\end{eqnarray*}
is widely known  (\cite{CC, KW, Pao}), where $a_i,b_i,c_i$ and $d_i$ (i=1,2) are given positive constants, $\Omega$ is a bounded domain in $\mathbb{R}^N$ $(N\geq1)$ with smooth boundary, $n$ is the outward unit normal vector on $\partial \Omega$. The system can be regarded as  depicting how two competitors evolve in a closed habitat $\Omega$, with no flux across the boundary $\partial \Omega$. Thus their competitive strengths are completely determined by the coefficients in the system. For the entire space problem
\begin{eqnarray*}\label{1.3}
\left\{\begin{array}{ll}
u_t-d_1u_{xx}=u(a_1-b_1u-c_1v),&t>0,\ x\in \mathbb{R},\\[1mm]
v_t-d_2v_{xx}=v(a_2-b_2u-c_2v),&t>0,\ x\in \mathbb{R},
\end{array}\right.
\end{eqnarray*}
there have been many interesting studies on the existence of positive traveling wave solutions (see, e.g., \cite{Kan,OMWM}).

Recently, Guo and Wu (\cite{GW}) investigated the free boundary problem
\begin{equation}\label{aa}
\left\{\begin{array}{lll}
 u_t=u_{xx}+u(1-u-kv), &t>0, \ \ 0<x<s(t),\\[1mm]
 v_t=dv_{xx}+Rv(1-v-hu),&t>0, \ \ 0<x<s(t),\\[1mm]
 u_x=v_x=0,\ \ &t>0, \ \ x=0,\\[1mm]
 u=v=0, \ s'(t)=-\mu(u_x+\beta v_x),\ \ &t>0, \ \ x=s(t), \\[1mm]
 u(0,x)=u_0(x), \ \ v(0,x)=v_0(x),& 0\leq x\leq s_0=s(0)
 \end{array}\right.
\end{equation}
and only focused on the weak competition case: $0<h,k<1$.  It was proved that if $s(\infty)<\frac \pi2\min\{\sqrt{d/R},\, 1\}$, then the two species vanish eventually, i.e.,
  $\lim_{t\to\infty}\|u(t,\cdot),v(t,\cdot)\|_{C([0,s(t)])}=0;$
if $s(\infty)>s^*$, then the two species spread successfully, i.e., $\dd\liminf_{t\to\infty} u(t,\cdot)>0$, $\dd\liminf_{t\to\infty} v(t,\cdot)>0$, where
  $$s^*=\left\{\begin{array}{ll}
  \dd\frac{\pi}{2}\sqrt{\frac d R}\frac 1 {\sqrt{1-h}} \ \ &{\rm if}\ d<R,\\[5mm] \dd\frac\pi 2\frac 1 {\sqrt{1-k}} \ \ &{\rm if}\ d>R,\\[5mm]
   \dd\frac\pi 2\min\left\{\frac 1 {\sqrt{1-k}},\frac 1 {\sqrt{1-h}}\right\} \ \ &{\rm if }\ d=R.
   \end{array}\right.$$
Moreover, they demonstrated that if $d,R,\mu$ and $\beta$ are given and $d\neq R$, then the spreading-vanishing dichotomy can be assured either $h$ or $k$ is small enough.  In addition, the precise asymptotic behavior of $(u,v)$ was provided when the two species spread successfully, i.e.,
$$\lim_{t\to\infty}(u,v)(t,x)=\left(\frac{1-k}{1-hk}, \frac{1-h}{1-hk}\right) $$
uniformly on any compact subset of $[0,\infty)$.
Then, Wang and Zhao (\cite{WZh1}) extended  the results obtained in \cite{GW}, and discussed the long time behavior of the cases with both $0<h<1\leq k$ and $0<k<1\leq h$. Besides, the authors of \cite{WZh1} also disposed of problem (\ref{aa}) with the left boundary condition $u_x=v_x=0$ replaced by $u=v=0$, and derived various interesting results.

In \cite{DL2}, Du and Lin investigated the diffusive competition model (\ref{a1})  in which $a_i(r), b_i(r), c_i(r)$ are given positive constants. In the case that $u$ is an inferior competitor (determined by the reaction terms), they demonstrated that $(u,v)\to (0,v^*)$ as $t\to \infty$, where $(0,v^*)$ is the semitrivial steady-states of the system. When $u$ is a superior competitor, a spreading-vanishing dichotomy were given; moreover, when spreading of $u$ happens, they also presented some rough estimates of the spreading speed.

The primary intention of this article is to generalize the results mentioned above to competition system in heterogeneous environment, where the variable intrinsic growth rate may be ``very negative'' in a `` suitably large region''. We will give some sufficient conditions for species spreading success and spreading failure, and present the long time behavior of solutions when spreading successfully. Though our ideas essentially follow that of \cite{GW,WZh1} and \cite{DL2}, most of the technical proofs here are quite different from and much more involved than the corresponding ones, and some of the results here are proved by completely different methods.

We end the introduction by mentioning some related researches.  In the absence of $v$, the two problems (\ref{1.1}) and (\ref{a1}) both reduce to the diffusive logistic model with a free boundary which has been studied by Du et al. \cite{DG,DG1,DGP}.  Peng and Zhao \cite{PZh} considered a free boundary problem of the diffusive logistic model with seasonal succession. In \cite{WZh2}, Wang and Zhao studied a free boundary problem for a predator-prey model with double free boundaries in one dimension, in which the prey occupies  the whole space but the predator lives in a bounded area at the initial state. Later, Wang \cite{Wjde14, Wcnns14} dealt with the case that both predator and prey live in a bounded area at the initial state. For more mathematical problems with free boundary conditions, we refer the readers to \cite{ChenF,Cui,DLou,ELW,FHX,WZhao,ZW} and some of the references cited therein.

The organization of this article is as follows. In the following three sections, we shall focus on the behavior of problem (\ref{1.1}). The global existence, uniqueness and estimate of solutions $(u,v,h)$ for problem (\ref{1.1}) are established in Section \ref{sec2}. In Section \ref{sec3}, we give some sufficient conditions of spreading and vanishing. Section \ref{sec4} is devoted to the long time behavior of $(u,v)$ for the spreading case. In Section \ref{sec5}, we explain how the techniques for (\ref{1.1}) can be modified to discuss  problem (\ref{a1}), where the free boundary is only determined by the new competitor $u$.

\section{Global existence, uniqueness and estimate of solutions to (\ref{1.1})}\label{sec2}
\setcounter{equation}{0}
In this section we present the global existence, uniqueness and estimate of the solution $(u,v,h)$ to problem (\ref{1.1}).
\begin{theorem}\label{t2.1}
For any given $u_0$ and $v_0$ satisfying {\rm(\ref{1.2})}, problem {\rm(\ref{1.1})} has a unique global solution $(u,v,h)$, and
\begin{equation}\label{2.1}
(u,v,h)\in C^{1+\frac{\gamma}{2},2+\gamma}( Q)\times C^{1+\frac{\gamma}{2},2+\gamma}( Q)\times C^{1+\frac{1+\gamma}{ 2}}((0,\infty)),
\end{equation}
where $Q=\{(t,r)\in \mathbb{R}^2:t\in(0,\infty),\ r\in[0,h(t)]\}$. Moreover, there exist  positive constants $K$ and $K_1$ dependent on $d_i,\mu, \beta, \underline{b}_i, \bar b_i, \underline{c}_i, \bar{c}_i, h_0$ and $\|a_i,u_0,v_0\|_\infty$ such that
\begin{equation}\label{2.2}
0<u(t,r)\leq K, \ 0<v(t,r)\leq K,\ 0<h'(t)\leq K,\ \forall\  t>0,\ 0\leq r<h(t);
\end{equation}
\begin{equation}\label{2.3}
\|u(t,\cdot),v(t,\cdot)\|_{C^1([0,h(t)])}\leq K_1,\ \forall \ t\geq1;\ \ \|h'\|_{C^{\frac{\gamma}{2}}([n+1,n+3])}\leq K_1, \ \forall\ n\geq0.
\end{equation}
\end{theorem}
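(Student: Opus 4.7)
The plan is to follow the now-standard scheme for free-boundary problems of this type: straighten the unknown boundary via a change of variables, obtain local existence by a contraction argument, extend globally via uniform a priori bounds, and finally bootstrap the regularity.

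For \textbf{local existence and uniqueness}, I would introduce the transformation $y = h_0\,r/h(t)$ to convert (\ref{1.1}) into a coupled parabolic system for $(\tilde u(t,y),\tilde v(t,y),h(t))$ on the fixed cylinder $(0,T)\times[0,h_0]$, with $h$ appearing as an unknown in the stretched coefficients and in the free-boundary ODE
\be
h'(t)=-\mu\frac{h_0}{h(t)}\bigl(\tilde u_y(t,h_0)+\beta\tilde v_y(t,h_0)\bigr).
\ee
On a small ball in $C^{\gamma/2,1+\gamma}([0,T]\times[0,h_0])^2\times C^{1+\gamma/2}([0,T])$ centred at the constant extensions of $(u_0,v_0,h_0)$, I would define the fixed-point map sending $(u,v,h)$ to $(\hat u,\hat v,\hat h)$, where $(\hat u,\hat v)$ solve the linear parabolic system with coefficients frozen at $(u,v,h)$ and $\hat h$ is recovered by integrating the ODE above. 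Parabolic Schauder estimates together with smallness of $T$ make this map a contraction, yielding a unique local solution of the regularity class stated in (\ref{2.1}). The only wrinkle compared with the one-dimensional setting is the degeneracy of the radial Laplacian at $r=0$, but the Neumann condition $u_r(t,0)=v_r(t,0)=0$ allows one to extend $u,v$ evenly across $r=0$ and treat the equation as uniformly parabolic on a slightly enlarged domain.

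For the \textbf{a priori bounds} (\ref{2.2}) I would derive, in order: (i) $u,v>0$ by the strong maximum principle applied to the linear equation each variable satisfies; (ii) upper bounds $u,v\leq K$ by comparison with the logistic ODE supersolutions $U'=U(\|a_1\|_\infty-\underline b_1 U)$ and $V'=V(\|a_2\|_\infty-\underline c_2 V)$, giving $K$ depending only on $\|u_0,v_0,a_i\|_\infty$, $\underline b_1$ and $\underline c_2$; (iii) $h'(t)>0$ via Hopf's lemma at $r=h(t)$; and (iv) the upper bound $h'(t)\leq K$ via the standard moving-strip barrier of \cite{DL2,GW}: in $\Omega_M=\{(t,r):h(t)-M^{-1}\leq r\leq h(t)\}$ introduce $w(t,r)=K[2M(h(t)-r)-M^2(h(t)-r)^2]$ and choose $M$ large enough — depending on the coefficient and $L^\infty$ bounds — so that $w$ is a supersolution for each of $u$ and $v$ on $\Omega_M$; comparison then yields $|u_r(t,h(t))|,|v_r(t,h(t))|\leq 2MK$, and the free-boundary condition delivers the claim. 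Since these bounds are independent of the existence time, the local solution extends to $t=\infty$ by continuation, and uniqueness persists globally.

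Finally, for the regularity bounds (\ref{2.3}), I would apply interior parabolic Schauder estimates to the straightened problem on successive windows $[n,n+3]\times[0,h_0]$: the stretched coefficients inherit $C^{\gamma/2,\gamma}$ regularity from $h'\in C^{\gamma/2}$ together with the $C^\gamma$ regularity of $a_i,b_i,c_i$, and the uniform $L^\infty$ bounds allow one to iterate to obtain $\|\tilde u,\tilde v\|_{C^{1+\gamma/2,2+\gamma}([n+1,n+3]\times[0,h_0])}\leq K_1$; pulling back and substituting into the free-boundary condition then gives the desired $C^{\gamma/2}$ bound on $h'$, and hence $h\in C^{1+(1+\gamma)/2}$. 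I expect step (iv) — the barrier construction giving a uniform upper bound on $h'$ — to be the main technical obstacle, because one must simultaneously control both competing components in the presence of the sign-changing, heterogeneous reaction terms, and choose $M$ so that the resulting bound depends only on the given data and not on the unknown $h(t)$ itself.
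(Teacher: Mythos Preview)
Your outline is essentially the paper's own scheme: straighten the boundary, contraction mapping for local existence, comparison with the logistic ODE for $L^\infty$ bounds, Hopf for $h'>0$, the quadratic barrier of \cite{DL,GW} for $h'\le K$, continuation to global existence, and window-by-window interior estimates on $[n,n+3]$ for (\ref{2.3}). Two small points where the paper differs from what you wrote are worth noting.

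First, in your last step you invoke interior \emph{Schauder} estimates on the straightened problem. That requires a uniform-in-$n$ H\"older bound on the stretched coefficients, hence on $h'$, which is precisely part of what (\ref{2.3}) asserts; as written the argument is circular. The paper breaks the loop by using interior $L^p$ estimates instead (only $L^\infty$ coefficients are needed), then Sobolev embedding gives $\|w,z\|_{C^{(1+\gamma)/2,1+\gamma}([n+1,n+3]\times[0,1])}\le K_1$ uniformly in $n$; the free-boundary condition then yields $\|h'\|_{C^{\gamma/2}([n+1,n+3])}\le K_1$. Only after this does Schauder become available with uniform constants.

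Second, you flag the barrier construction (iv) as the main obstacle, but in fact the sign-changing reaction terms cause no difficulty there: since $u,v\le K$, the right-hand sides are bounded by a constant times $u$ (resp.\ $v$), and the same barrier $w(t,r)=K[2M(h(t)-r)-M^2(h(t)-r)^2]$ as in \cite{DL,GW} works verbatim with $M$ depending only on $d_i$, $K$, $\|a_i\|_\infty$, $h_0$ and the $C^1$ norms of $u_0,v_0$. The paper treats this step as routine and refers the reader to \cite{GW,DL2}.
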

\begin{proof}
The proof can be done by adapting analogous methods in \cite{DL2, GW, Wang14, WZhao}. For the reader's convenience,  we provide the arguments with obvious modifications, which is divided into the following several steps.

{\it Step 1.}
Notice that $a_i(r)$, $b_i(r)$ and $c_i(r)$ belong to $L^\infty([0, \infty))$. Similar to  the argument of Lemma 2.1 in \cite{GW} we can apply the contraction mapping theorem to show that problem (\ref{1.1}) has a unique local solution $(u,v,h)\in C^{\frac{1+\nu}{2},1+\nu}( Q_\tau)\times C^{\frac{1+\nu}{2},1+\nu}(Q_\tau)\times C^{1+\frac{\nu}{2}}(0,\tau)$ for any $\nu\in(0,1)$ and some $\tau>0$ small enough, where $Q_\tau=\{(t,r):t\in(0,\tau],\ r\in[0,h(t)]\}$.

Then we promote the regularity of the solution $(u,v,h)$. To do this, define the transformations
\begin{equation}\label{2.4}
y=x/h(t),\ \ s=|y|, \ \ w(t,s)=u(t,r),\ \ z(t,s)=v(t,r).
\end{equation}
By elementary calculus one can verify that
\begin{equation}\label{2.5}
\left\{\begin{array}{lll}
 w_t-d_1\zeta(t)\Delta_sw-\xi(t,s)w_s=w(\tilde a_1-\tilde b_1w-\tilde c_1z), &0<t<\tau, \ \ 0\leq s<1,\\[1mm]
 z_t-d_2\zeta(t)\Delta_sz-\xi(t,s)z_s=v(\tilde a_2-\tilde b_2w-\tilde c_2z),&0<t<\tau, \ \ 0\leq s<1,\\[1mm]
 w_s(t,0)=0,\ w(t,1)=0,\ \ &0\le t<\tau, \\[1mm]
 z_s(t,0)=0,\ z(t,1)=0,\ \ &0\le t<\tau,\\[1mm]
 w(0,x)=u_0(h_0s), \ \ z(0,x)=v_0(h_0s),& 0\leq s\leq 1,
 \end{array}\right.
\end{equation}
where $\zeta(t)=h^{-2}(t)$, $\xi(t,s)=h'(t)s/h(t)$, $\tilde g=\tilde g(t,s)= g(h(t)s)$, $g$ may be one of the functions $a_i, b_i, c_i$ (i=1,2). Obviously, (\ref{2.5}) is an initial and boundary problem with fixed boundary condition. Note that $a_i(r), b_i(r)$ and $c_i(r)$ $(i=1,2)$ belong to $C^\gamma([0,\infty))$,
$(u,v,h)\in C^{\frac{1+\nu}{2},1+\nu}( Q_\tau)\times C^{\frac{1+\nu}{2},1+\nu}(Q_\tau)\times C^{1+\frac{\nu}{2}}(0,\tau)$. For any $0<\varepsilon\ll1$, applying Theorem 10.1 of \cite{LSU} to problem (\ref{2.5}) on $[\varepsilon, \tau]\times{[\varepsilon,1]}$ and $[\varepsilon,\tau]\times[0,1-\varepsilon]$, respectively, we can derive
$$w,\,z\in C^{1+\frac{\gamma}2,\,2+\gamma}\big([\varepsilon,\tau]\times[\varepsilon,1]\big)
  \cap C^{1+\frac{\gamma}2,\,2+\gamma}\big([\varepsilon,\tau]\times[0,1-\varepsilon]\big).$$
This combined with (\ref{2.4}) results in
$$u,\,v\in C^{1+\frac{\gamma}2,\,2+\gamma}\big([\varepsilon,\tau]\times[\varepsilon h(t),h(t)]\big)
  \cap C^{1+\frac{\gamma}2,\,2+\gamma}\big([\varepsilon,\tau]\times[0,(1-\varepsilon)h(t)]\big).$$
Owing to the arbitrariness of $\varepsilon$ we have
$u,\,v\in C^{1+\frac{\gamma}2,\,2+\gamma}((0,\tau]\times[0,h(t)])$,
which implies $\ u_r,\,v_r\in C^{\frac{1+\gamma}2,\,1+\gamma}((0,\tau]\times[0,h(t)]).$
Therefore, by means of the free boundary condition $h'(t)=-\mu(u_r+\beta v_r)$, it can be deduced that $h'\in C^{\frac{1+\gamma}2}((0,\tau])$.

{\it Step 2.} We assert that if $(u,v,h)$ is a solution of problem (\ref{1.1}) defined in $[0,\tau]$ for some $\tau\in(0,\infty)$, then there exists a positive constant $K_2$ independent of $\tau$ such that these results of (\ref{2.2}) hold for $t\in(0,\tau)$ and $r\in(0,h(t))$ instead of $t>0$ and $r\in(0,h(t))$.

As $u,v>0$ for $0<r<h(t)$ and $u=v=0$ for $r=h(t)$, we have $u_r(t,h(t)), v_r(t,h(t))\le 0$ and so $h'(t)\ge 0$. In view of
\cite[Lemma 2.6]{Lie}, it can be deduced that $u_r(t,h(t)), v_r(t,h(t))<0$. Therefore,  $h'(t)=-\mu( u_r+\beta v_r)>0$ for $0<t\le\tau$.

Let $\bar u(t)$ be the solution of $u'=u(\|a_1(r)\|_{L^\infty}-\underline{b}_1 u)$ with $u(0)=\|u_0\|_{L^\infty}$. The comparison principle allows us to conclude that $u(t,r)\leq\bar u(t)\leq\max\{\|a_1\|_{\infty}/\underline{b}_1,\|u_0\|_{\infty}\}$ for all $t\in(0,\tau]$ and $r\in[0,h(t)]$. Analogously, we can obtain that $v(t,r)\leq\max\{\|a_2\|_{\infty}/\underline{c}_2,\|v_0\|_{\infty}\}$ for all  $t\in(0,\tau]$ and $r\in[0,h(t)]$.

The proof of remaining results for our assertion is similar to that of corresponding part of Lemma 2.2 in \cite{GW} (or Theorem 2.4 in \cite{DL2}).

{\it Step 3.} By invoking the conclusions of {\it Step 1} and {\it Step 2}, we can apply the contradiction argument to achieve the existence of global solution. When the solution to problem (\ref{1.1}) exists globally,  the procedure of the previous two steps can be still done for arbitrary $\tau\in(0,\infty)$. This finishes the proof of (\ref{2.1}) and (\ref{2.2}).

{\it Step 4.} We show (\ref{2.3}). For any integer $n\geq0$, define $w^n(t,s)=w(t+n,s)$,  $z^n(t, s)=z(t+n,s)$ and $h^n(t)=h(t+n)$. It is easy to check that $(w^n,z^n,h^n)$ satisfies
\begin{eqnarray*}
\left\{\begin{array}{lll}
 w^n_t-d_1\zeta^n(t)\Delta_s w^n-\xi^n(t,s)w^n_s=f^n, &0<t\leq3, \ \ 0\leq s<1,\\[1mm]
 z^n_t-d_2\zeta^n(t)\Delta_s z^n-\xi^n(t,s)z^n_s=g^n,&0<t\leq3, \ \ 0\leq s<1,\\[1mm]
 w^n_s(t,0)=0,\ w^n(t,1)=0,\ \ &0\leq t\leq3, \\[1mm]
 z^n_s(t,0)=0,\ z^n(t,1)=0,\ \ &0\leq t\leq3,\\[1mm]
 w^n(0,x)=u(n,h^n(0)s), \ \ z^n(0,x)=v(n,h^n(0)s),& 0\leq s\leq 1,
 \end{array}\right.
\end{eqnarray*}
where $\zeta^n(t)=\zeta(t+n)$, $\xi^n(t,s)=\xi(t+n,s)$, $f^n=w^n(a_1(h^n(t)s)-b_1(h^n(t)s)w^n-c_1(h^n(t)s)z^n)$ and $g^n=z^n(a_2(h^n(t)s)-b_2(h^n(t)s)w^n-c_2(h^n(t)s)z^n)$.

Combining estimate (\ref{2.2}) with the assumptions on $h_0$, $a_i(r)$, $b_i(r)$ and $c_i(r)$, we can deduce that $\zeta^n$, $\xi^n$, $f^n$ and $g^n$ are uniformly bounded on $n$.  Besides, $w^n(t,1)=z^n(t,1)=0$. Therefore, employing the interior $L^p$ estimate  (see \cite[Theorems 7.15 and 7.20]{Lie}) and embedding theorem yields that there is a positive constant $K_1$ independent of $n$ such that $\|w^n,z^n\|_{C^{\frac{1+\gamma}2,1+\gamma}([1,3]\times[0,1])}\leq K_1$ for any $n\geq0$. This implies that for any $n\geq0$, $\|w,z\|_{C^{\frac{1+\gamma}2,1+\gamma}(E_n)}\leq K_1$ with $E_n=[n+1,n+3]\times[0,1]$. In view of estimate (\ref{2.2}), the transformation (\ref{2.4}) and the free boundary condition, it is not difficult to get that $\|h'\|_{C^{\frac{\gamma}{2}}([n+1,n+3])}\leq K_1$ for all $n\geq0$, i.e., the second estimate of (\ref{2.3}). Because these rectangles $E_n$ overlap and $K_1$ is independent of $n$, we have $\|w,z\|_{C^{0,1}([1,\infty)\times[0,1])}\leq K_1$. And then the first estimate of (\ref{2.3}) is obtained immediately.

This completes the proof of Theorem \ref{t2.1}.
\end{proof}
It is readily seen from the last estimate of (\ref{2.2}) that the free boundary $h(t)$ is strictly monotone increasing. Thus there exists $h_\infty\in(0,\infty]$ such that  $\lim_{t\to\infty}h(t)=h_\infty$.

\section{Spreading-vanishing criteria for problem (\ref{1.1})}\label{sec3}
\setcounter{equation}{0}
To discuss the asymptotic behavior of $u$ and $v$ for vanishing case ($h_\infty<\infty$), we first give the following proposition.
\begin{proposition}\label{pr3.1}
Let $d$, $\nu$, $\sigma$ and $g_0$ be positive constants and $C$ be any real number. Suppose $$w_0\in C^2([0,g_0]),\ w_0'=w_0(g_0)=0,\ w_0(r)>0 {\rm\; in\;}[0,g_0).$$ Assume that $g(t)\in C^{1+\frac \sigma 2}([0,\infty)),\ g(t)>0\ {for}\ 0\leq t<\infty,\ \lim_{t\to\infty} g(t)=g_\infty<\infty, \ \lim_{t\to\infty} g'(t)=0;$ and that $ w\in C^{\frac{1+\sigma}2,1+\sigma}([0,\infty)\times[0,g(t)])$, $ w(t,r)>0$ for $0\leq t<\infty$ and $0\leq r<g(t)$,
 $\|w(t,\cdot)\|_{C^1[0,\,g(t)]}\leq M$ for any $t\geq1$ and some $M>0$.
If $(w,g)$ satisfies
\begin{eqnarray*}
\left\{\begin{array}{lll}
 w_t-d\Delta w\geq Cw, &t>0,\ 0\leq r<g(t),\\[1mm]
 w_r=0,\ \ \ &t>0, \ r=0,\\[1mm]
 w=0,\ g'(t)\geq-\nu w_r, \ &t>0,\ r=g(t),\\[1mm]
 w(0,r)=w_0(r), \ &0\leq r\leq g_0=g(0),
 \end{array}\right.
\end{eqnarray*}
then
  $\lim_{t\to\infty}\,\max_{0\leq r\leq g(t)}w(t,r)=0.$
\end{proposition}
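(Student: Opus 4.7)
My plan is to argue by contradiction using a time-translation compactness argument followed by the strong parabolic maximum principle and the Hopf boundary-point lemma. Suppose, for contradiction, that $\limsup_{t\to\infty}\max_{0\le r\le g(t)}w(t,r)=2\delta>0$, and pick sequences $t_n\to\infty$ and $r_n\in[0,g(t_n)]$ with $w(t_n,r_n)\ge\delta$. The uniform spatial $C^1$ bound $\|w(t,\cdot)\|_{C^1[0,g(t)]}\le M$ combined with $w(t,g(t))=0$ gives $w(t_n,r_n)\le M(g(t_n)-r_n)$, which rules out $r_n\to g_\infty$; after passing to a subsequence, $r_n\to r_*\in[0,g_\infty)$.

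To obtain a limit on a fixed spatial domain I would straighten the moving boundary by setting $y=r/g(t)$ and $W(t,y):=w(t,g(t)y)$. Then $W$ satisfies, on $\{0\le y<1\}$,
\[
 W_t-\frac{d}{g^2(t)}\Bigl(W_{yy}+\frac{N-1}{y}W_y\Bigr)-\frac{g'(t)}{g(t)}\,y\,W_y\ \ge\ CW,
\]
together with $W_y(t,0)=0$, $W(t,1)=0$ and the free-boundary estimate $-W_y(t,1)=-g(t)w_r(t,g(t))\le g(t)g'(t)/\nu$. Since $g(t)\to g_\infty$ and $g'(t)\to 0$, the coefficients of the equation stabilize and the boundary trace $W_y(t,1)$ tends to $0$.

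Now set $W_n(\tau,y):=W(t_n+\tau,y)$ and $g_n(\tau):=g(t_n+\tau)$. The uniform $C^1$ bound on $w$ combined with interior parabolic $L^p$/Schauder estimates (obtained by treating the inequality as the linear equation $LW=f$ with nonnegative, locally bounded forcing $f$, using the assumed H\"older regularity of $w$ on each finite time window) yields, via a diagonal subsequence argument, convergence in $C^{1,2}_{\rm loc}(\mathbb{R}\times[0,1))$ together with $C^1$ convergence up to $y=1$ to a nonnegative limit $W_\infty$ on $\mathbb{R}\times[0,1]$ satisfying
\[
 (W_\infty)_t-\frac{d}{g_\infty^2}\Bigl((W_\infty)_{yy}+\frac{N-1}{y}(W_\infty)_y\Bigr)\ \ge\ CW_\infty,
\]
with $(W_\infty)_y(\tau,0)=0$, $W_\infty(\tau,1)=0$, $(W_\infty)_y(\tau,1)=0$, and crucially $W_\infty(0,y_*)\ge\delta$ where $y_*:=r_*/g_\infty\in[0,1)$.

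The contradiction is then delivered by the maximum principle: since $W_\infty\ge 0$ is a super-solution of a uniformly parabolic linear operator on $(-\infty,0]\times[0,1)$ and is strictly positive at the interior point $(0,y_*)$, the strong parabolic minimum principle gives $W_\infty(0,y)>0$ for all $y\in[0,1)$; then the Hopf boundary-point lemma applied at $(0,1)$ forces $(W_\infty)_y(0,1)<0$, which contradicts $(W_\infty)_y(0,1)=0$. The main obstacle is the compactness step: the proposition assumes only a one-sided parabolic inequality and non-uniform interior H\"older regularity on $w$, so justifying the $C^{1,2}_{\rm loc}$ convergence of the translated family requires a careful combination of the Lieberman interior estimates already used in Theorem \ref{t2.1} with the a priori bounds provided by the hypothesis; once this regularity is in place, the strong minimum principle plus Hopf conclusion is entirely standard.
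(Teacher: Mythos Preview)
Your contradiction--straightening--compactness--Hopf strategy is exactly the standard argument that the paper invokes by citing \cite{ZW}, so the overall route is correct and matches the intended proof.

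One technical point deserves sharpening. You propose to obtain $C^{1,2}_{\rm loc}$ compactness of the translates $W_n$ by writing the inequality as $LW=f$ with $f\ge 0$ and applying Lieberman's interior $L^p$ estimates. But those estimates require $f\in L^p$, and from the hypotheses you only know $f\ge 0$, not that it is bounded; the assumed class $C^{\frac{1+\sigma}2,1+\sigma}$ does not control $w_t$ or $w_{rr}$ uniformly. The clean fix is to avoid asking the limit itself to be a classical supersolution: the uniform spatial $C^1$ bound plus the time--H\"older regularity already give $C^0$ (indeed $C^{0,1}$-in-$y$) compactness of $W_n$ up to $y=1$, so $(W_\infty)_y(\tau,1)=0$ is legitimate. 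For the Hopf step, instead of applying it to $W_\infty$, compare each $W_n$ on $[-1,0]\times[0,1]$ with the \emph{solution} $\phi_n$ of the corresponding linear equation (equality, not inequality) with the same lateral boundary data and initial data $\phi_n(-1,\cdot)\le W_n(-1,\cdot)$; the $\phi_n$ enjoy genuine Schauder bounds, converge to a limit $\phi$ solving the frozen-coefficient equation, and satisfy $\phi\le W_\infty$ with equality at $(0,1)$. Hopf applied to $\phi$ then forces $(W_\infty)_y(0,1)\le \phi_y(0,1)<0$, which is the contradiction you want. With this adjustment the argument is complete.
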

\begin{proof}
The proof of this result is essentially the same as {\it Step 1} of Theorem 3.1 in \cite{ZW},  so we leave out the details.
\end{proof}
\begin{theorem}\label{the3.1}
Assume that $(u,v,h)$ is the solution of problem {\rm (\ref{1.1})}. If $h_\infty<\infty$, then
$$
\lim_{t\to\infty}\|u(t,\cdot),v(t,\cdot)\|_{C([0,h(t)])}=0.
$$
This result indicates that when the two species do not spread successfully, then they must vanish eventually.
\end{theorem}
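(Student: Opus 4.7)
The plan is to apply Proposition 3.1 separately to $u$ and to $v$, viewing each as a scalar function whose reaction term can be bounded below by a linear term. The substantive preliminary step is to verify that $h'(t)\to 0$, after which the rest is a matter of checking boundary signs.

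First I would establish that $h'(t)\to 0$ as $t\to\infty$. Since $h$ is monotone increasing by the last estimate in \eqref{2.2} and $h_\infty<\infty$, the integral $\int_0^\infty h'(t)\,dt=h_\infty-h_0$ is finite. Combined with the uniform H\"older bound $\|h'\|_{C^{\gamma/2}([n+1,n+3])}\le K_1$ in \eqref{2.3}, a standard argument shows $h'(t)\to 0$: otherwise we would find $t_k\to\infty$ with $h'(t_k)\ge \delta>0$, and the H\"older bound would give a fixed-length interval around each $t_k$ on which $h'\ge \delta/2$, contradicting the convergence of the integral.

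Next I would apply Proposition 3.1 to $w=u$ with $d=d_1$, $\nu=\mu$, $g=h$. The PDE gives
\[
u_t-d_1\Delta u = u\bbb(a_1(r)-b_1(r)u-c_1(r)v\bbb) \ge -C_1 u,
\]
where $C_1=\|a_1\|_\infty+\bar b_1 K+\bar c_1 K$ using the $L^\infty$-bounds from \eqref{2.2} and hypothesis (\textbf{H}). The interior boundary condition $u_r(t,0)=0$ and the Dirichlet condition $u(t,h(t))=0$ are immediate from \eqref{1.1}. The crucial inequality $h'(t)\ge -\mu u_r$ at $r=h(t)$ follows from the free boundary law $h'(t)=-\mu(u_r+\beta v_r)$ together with $v_r(t,h(t))\le 0$, which holds because $v>0$ inside and $v=0$ on the boundary (so $-\mu\beta v_r\ge 0$). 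Regularity hypotheses on $u$ and $u_0$ are provided by Theorem \ref{t2.1} and \eqref{1.2}. Since $h_\infty<\infty$ and $h'(t)\to 0$ have already been secured, Proposition 3.1 yields $\max_{0\le r\le h(t)}u(t,r)\to 0$.

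An identical argument applied to $w=v$ with $d=d_2$, $\nu=\mu\beta$ gives the corresponding conclusion for $v$: the reaction bound is $v_t-d_2\Delta v\ge -C_2 v$, and the boundary condition rearranges as $h'(t)=-\mu u_r-\mu\beta v_r \ge -\mu\beta v_r$, using $u_r(t,h(t))\le 0$. Combining the two limits gives the theorem. The only real content lies in the $h'(t)\to 0$ step; everything else is sign-checking of $u_r$ and $v_r$ on the moving boundary and invoking the uniform estimates of Section \ref{sec2}.
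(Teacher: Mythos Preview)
Your proof is correct and follows essentially the same approach as the paper: first use the uniform H\"older bound on $h'$ together with $h_\infty<\infty$ to get $h'(t)\to 0$, then bound the reaction terms from below by $-Cu$ (resp.\ $-Cv$) and use the sign of $v_r(t,h(t))$ (resp.\ $u_r(t,h(t))$) at the free boundary to verify the hypotheses of Proposition~\ref{pr3.1}. The paper cites the strict inequality $u_r,v_r<0$ at $r=h(t)$ from the proof of Theorem~\ref{t2.1}, while you use the weaker $\leq 0$; either suffices.
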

\begin{proof}
By the last estimate of (\ref{2.3}) we know that $\|h'\|_{C^{\frac{\gamma}{2}}([1,\infty))}\leq K_1$.  Combining this with $h'(t)>0$ and $h_\infty<\infty$ implies $h'(t)\to 0$ as $t\to\infty$.

Due to Hypothesis ({\bf H}) and estimate (\ref{2.2}), there exists positive constant $M_1$ such that  $|a_1-b_1u-c_1v|\leq M_1$ and $|a_2-b_2u-c_2v|\leq M_1$. From the proof of Theorem \ref{t2.1} we can find that $u_r(t,h(t))<0$ and $v_r(t,h(t))<0$. Thus it follows that
\begin{eqnarray*}
\left\{\begin{array}{lll}
 u_t-d_1\Delta u\geq -M_1u, &t>0,\  0\leq r<h(t),\\[1mm]
 u_r=0,\ \ \ &t>0, \  r=0,\\[1mm]
 u=0,\  h'(t)\geq-\mu u_r, \ &t>0,\ r=h(t),\\[1mm]
 u(0,r)=u_0(r), \ &0\leq r\leq h_0.
 \end{array}\right.
\end{eqnarray*}
By virtue of (\ref{2.2}), (\ref{2.3}) and Proposition \ref{pr3.1} it is derived that  $\lim_{t\to\infty}\,\max_{0\leq r\leq h(t)}u(t,r)=0.$
In the same way we immediately get $\lim_{t\to\infty}\,\max_{0\leq r\leq h(t)}v(t,r)=0.$

This proof is completed.
\end{proof}

For any given $\ell$, let $\lambda_1(\ell;q,d)$ denote the principle eigenvalue of the problem
\begin{equation}\label{3.1}
\left\{\begin{array}{ll}
-d\Delta\psi -q(|x|)\psi=\lambda\psi,&x\in B_\ell,\\[1mm]
\psi=0,&x\in \partial B_\ell,
\end{array}
\right.
\end{equation}
where $q\in L^\infty([0,\infty))$ and $d$ is a positive constant, $B_\ell$ represents the ball of $\mathbb{R}^N$ centered at the origin with radius $\ell$. In the sequel we will use $\lambda_1(h_0;a_1,d_1)$, $\lambda_1(h_0;a_2,d_2)$, $\lambda_1(h_\infty;a_1,d_1)$, $\lambda_1(h_\infty;a_2,d_2)$  to denote the first eigenvalue of problem (\ref{3.1}) with $\ell$, $q$, $d$ replaced by the corresponding data, respectively. Notice that $q$ is bounded and the boundary condition is $\psi=0$. Then the following Proposition \ref{pr3.2} is well known (see, e.g., \cite{CC, Ni, W2}).

\begin{proposition}\label{pr3.2}
{\rm(i)} $\lambda_1(\ell;q,d)$ depends continuously on  $\ell$, $q$ and $d$;

{\rm(ii)} $\lambda_1(\ell;q,d)$ is strictly decreasing in $\ell$ and $q(|x|)$, strictly increasing in $d$;

{\rm (iii)} $\lim_{\ell\to0^+}\lambda_1(\ell;q,d)=\lim_{d\to\infty}\lambda_1(\ell;q,d)=\infty$,
$\lim_{d\to0^+}\lambda_1(\ell;q,d)=-\max_{[0,\ell]}q(|x|)$.
\end{proposition}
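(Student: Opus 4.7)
The plan is to read off all three properties from the variational (Rayleigh) characterization of the principal eigenvalue,
$$\lambda_1(\ell;q,d) \;=\; \inf_{\psi\in H^1_0(B_\ell)\setminus\{0\}} \frac{d\int_{B_\ell}|\nabla\psi|^2\,dx \;-\; \int_{B_\ell}q(|x|)\psi^2\,dx}{\int_{B_\ell}\psi^2\,dx},$$
combined with the Krein--Rutman theorem, which supplies a strictly positive, radial minimizer $\psi_{\ell,q,d}$, unique up to scaling. Because every appearance of $\lambda_1$ is as an infimum of a quotient that depends transparently on $(q,d)$ and (after rescaling) on $\ell$, the conclusions reduce to careful bookkeeping plus one application of the strong maximum principle.

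For the monotonicity in (ii), I fix $\psi$ and observe that the Rayleigh quotient is affine with positive slope in $d$ and with negative slope in $q$, so the infimum is non-decreasing in $d$ and non-increasing in $q$. Strictness comes from testing with the minimizer of the larger-$\lambda_1$ side: inserting $\psi_{\ell,q,d_2}$ into the Rayleigh quotient with parameter $d_1<d_2$ gives $\lambda_1(\ell;q,d_1) < \lambda_1(\ell;q,d_2)$ since $\int|\nabla\psi_{\ell,q,d_2}|^2>0$, and the same one-line comparison handles $q$. For strict monotonicity in $\ell$, extending $\psi_{\ell_1,q,d}$ by zero to $B_{\ell_2}$ shows $\lambda_1(\ell_2;q,d)\le\lambda_1(\ell_1;q,d)$ when $\ell_1<\ell_2$; equality would force the zero-extension to be a principal eigenfunction on $B_{\ell_2}$ that vanishes on the nonempty open annulus $B_{\ell_2}\setminus\overline{B_{\ell_1}}$, contradicting the strong maximum principle.

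Continuity in $(q,d)$ for (i) is the Lipschitz estimate
$$|\lambda_1(\ell;q_1,d_1)-\lambda_1(\ell;q_2,d_2)| \le |d_1-d_2|\,R(\psi_*) + \|q_1-q_2\|_\infty,$$
obtained by testing with the minimizer $\psi_*$ of whichever problem has the larger $\lambda_1$, where $R(\psi_*):=\int|\nabla\psi_*|^2/\int\psi_*^2$ stays locally bounded thanks to the a priori bounds for $\lambda_1$ derived in (iii). Continuity in $\ell$ comes from the rescaling $y=x/\ell$, $\tilde\psi(y):=\psi(\ell y)$, which converts (\ref{3.1}) into the eigenvalue problem $-(d/\ell^2)\Delta\tilde\psi - q(\ell|y|)\tilde\psi = \lambda\tilde\psi$ on the fixed domain $B_1$ with Dirichlet boundary data; continuity in $\ell$ then reduces to continuity in the rescaled coefficients, already handled.

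For (iii), with $\mu_1>0$ the first Dirichlet eigenvalue of $-\Delta$ on $B_1$, the estimates $\int|\nabla\psi|^2\ge(\mu_1/\ell^2)\int\psi^2$ and $\int q\psi^2\le\max_{[0,\ell]}q\cdot\int\psi^2$ yield the lower bound
$$\lambda_1(\ell;q,d)\ge\frac{d\mu_1}{\ell^2}-\max_{[0,\ell]}q,$$
which immediately drives $\lambda_1\to\infty$ as $\ell\to 0^+$ or $d\to\infty$ and supplies the easy half $\liminf_{d\to 0^+}\lambda_1\ge-\max_{[0,\ell]}q$ of the third limit. The matching upper bound as $d\to 0^+$ is the main technical point: given $\varepsilon>0$, I would use continuity of $q$ to pick a radius $r_0$ with $q(r_0)>\max q -\varepsilon$ and a smooth nonnegative test function $\psi$ supported in a small neighbourhood of $\{|x|=r_0\}\subset B_\ell$ so that $\int q\psi^2/\int\psi^2\ge \max q -\varepsilon$; since $R(\psi)$ is then a fixed constant, $\lambda_1(\ell;q,d)\le dR(\psi)-(\max q-\varepsilon)\to -(\max q-\varepsilon)$ as $d\to 0^+$, and letting $\varepsilon\downarrow 0$ closes the argument. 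The only genuine obstacles are this localized test-function construction and the strong-maximum-principle step in (ii) for the strict monotonicity in $\ell$; everything else is a direct reading of the Rayleigh quotient.
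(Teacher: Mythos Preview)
The paper does not actually prove this proposition; it states the result as well known and refers to \cite{CC, Ni, W2}. Your variational argument via the Rayleigh quotient is precisely the standard route found in those references and is correct in all essentials: the monotonicity in $d$ and $q$ is read off from the affine dependence of the quotient, domain monotonicity in $\ell$ follows from extension by zero together with the strong maximum principle, the two-sided bound $d\mu_1/\ell^2-\max_{[0,\ell]}q\le\lambda_1\le dR(\varphi)-\int q\varphi^2/\int\varphi^2$ for any fixed test function $\varphi$ gives both limits in (iii), and continuity in $\ell$ is obtained by rescaling to the unit ball.

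Two minor points deserve one more sentence each. First, in the $d\to 0^+$ step you should say explicitly that $r_0$ can be taken strictly less than $\ell$ (so that the test function is compactly supported in $B_\ell$); this is automatic from the continuity of $q$ on $[0,\ell]$, even when the maximum is attained at the endpoint. Second, your Lipschitz bound for continuity in $(q,d)$ invokes the local boundedness of $R(\psi_*)$, which you attribute to (iii); since (iii) is established directly from the Rayleigh quotient without using (i), there is no circularity, but it is cleaner to note explicitly that $R(\psi_*)=(\lambda_1+\int q\psi_*^2/\int\psi_*^2)/d$ is bounded above once $\lambda_1$ is (via any fixed test function) and $d$ is bounded away from zero.
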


\begin{proposition}\lbl{p3.2}\, Assume that the function $q(x)$ satisfies one of the following assumptions:
  \vspace{-1mm}\begin{quote}
 {\bf(A1)}\, There exist a constant $\beta>0$ and two sequences $\{R_n\}$, $\{r_n\}$ satisfying $R_n>r_n>0$ and $R_n-r_n\to\ty$ as $n\to\ty$, such that $q(|x|)\ge\beta$ for $r_n\le|x|\le R_n$;\\
  {\bf(A2)}\, There exist three constants $\beta>0$, $k>1$, $-2<\gamma\le 0$ and a sequence $\{r_n\}$ satisfying $r_n\to\ty$ as $n\to\ty$, such that $q(|x|)\ge\beta |x|^\gamma$ for $r_n\le|x|\le kr_n$.
 \vspace{-1mm}\end{quote}
Then for any given $d>0$, there exists a unique $\ell_0=\ell_0(d)>0$ such that $\lm_1(\ell_0;q,d)=0$. Hence, $\lm_1(\ell;q,d)<0$ for all $\ell>\ell_0$.
\end{proposition}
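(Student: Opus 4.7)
The plan is to combine the three qualitative properties given just above---continuity of $\ell\mapsto\lm_1(\ell;q,d)$, its strict monotonicity in $\ell$, and the limit $\lm_1(\ell;q,d)\to+\infty$ as $\ell\to 0^+$---with the structural hypotheses (A1) or (A2) to locate a unique zero. Strict monotonicity gives both the uniqueness of $\ell_0$ and the tail inequality $\lm_1(\ell;q,d)<0$ for $\ell>\ell_0$ for free, so the only substantive step is to exhibit a single $\ell^*>0$ with $\lm_1(\ell^*;q,d)<0$. Once that is in hand, the intermediate value theorem applied to the continuous function $\ell\mapsto\lm_1(\ell;q,d)$ (which is $>0$ for small $\ell$ and $<0$ at $\ell^*$) delivers $\ell_0\in(0,\ell^*]$.

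For the crucial negativity I will use the variational characterization
$$\lm_1(\ell;q,d)=\inf_{0\ne\phi\in H_0^1(B_\ell)}\frac{\int_{B_\ell}\bigl(d|\nabla\phi|^2-q(|x|)\phi^2\bigr)\,dx}{\int_{B_\ell}\phi^2\,dx},$$
with a trial function concentrated on a thick annular region where $q$ is quantitatively positive. The key geometric observation is that any spherical shell $\{r<|x|<R\}$ contains the Euclidean ball $B(x_0,\rho)$ with $|x_0|=(r+R)/2$ and $\rho=(R-r)/4$ (a direct triangle-inequality check). By scaling, the first Dirichlet eigenvalue of $-\Delta$ on this ball is $C_N/\rho^2$ for a dimensional constant $C_N$, so taking its principal eigenfunction extended by zero to $B_\ell$ (for $\ell\ge R$) as a trial function yields
$$\lm_1(\ell;q,d)\le\frac{dC_N}{\rho^2}-\inf_{B(x_0,\rho)}q(|x|).$$

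Under (A1), set $r=r_n$, $R=R_n$, $\ell=R_n$; since $q\ge\beta$ on the inscribed ball the estimate becomes $\lm_1(R_n;q,d)\le 16dC_N/(R_n-r_n)^2-\beta$, and $R_n-r_n\to\infty$ makes this negative for $n$ large. Under (A2), set $r=r_n$, $R=kr_n$, $\ell=kr_n$; because $\gamma\le 0$, one has $q(|x|)\ge\beta(kr_n)^\gamma$ on the inscribed ball, so
$$\lm_1(kr_n;q,d)\le\frac{16\,dC_N}{(k-1)^2 r_n^2}-\beta k^\gamma r_n^\gamma,$$
and since $\gamma>-2$ the first term is $o(r_n^\gamma)$ as $r_n\to\infty$, so the right-hand side is eventually negative. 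Either way, $\ell^*$ is produced and the proof is finished.

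The main obstacle is really only selecting the correct trial function---the trick is to use the first eigenfunction of $-\Delta$ on an inscribed Euclidean ball rather than attempting a purely radial construction, so that one exploits the classical scaling $\mu_1(B_\rho)\sim\rho^{-2}$ directly and avoids the unfavorable $(R/r)^{N-1}$-type weight factors that appear if one uses radial sine bumps on the whole annulus. After that, the conclusion is a one-line application of the variational principle combined with the three qualitative facts from the preceding proposition.
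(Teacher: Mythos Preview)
Your argument is correct. The variational characterization over $H_0^1(B_\ell)$ is valid here since $q\in L^\infty$ makes $-d\Delta-q$ self-adjoint with compact resolvent; your inscribed-ball test function lies in $H_0^1(B_\ell)$ because $B(x_0,\rho)\subset\{r<|x|<R\}\subset B_\ell$, and the scaling $\mu_1(B_\rho)=C_N\rho^{-2}$ together with the lower bounds on $q$ in the annulus gives exactly the estimates you wrote. Under {\bf(A1)} the bound $16dC_N/(R_n-r_n)^2-\beta$ is eventually negative; under {\bf(A2)} the comparison $r_n^{-2}=o(r_n^{\gamma})$ (since $\gamma>-2$) makes $16dC_N/((k-1)^2r_n^{2})-\beta k^\gamma r_n^{\gamma}$ eventually negative. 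The remaining existence/uniqueness of $\ell_0$ then follows from the continuity, strict monotonicity in $\ell$, and the limit $\lambda_1(\ell;q,d)\to+\infty$ as $\ell\to 0^+$ recorded in Proposition~\ref{pr3.2}.

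As for comparison with the paper: the paper does not supply its own proof of this proposition at all---it simply states that ``this proposition can be proved by the similar method to that of \cite[Remark 3.1]{Wang14}'' and omits the details. Your write-up therefore fills in what the paper leaves out. The idea of placing a Euclidean ball inside the favorable annulus and using its Dirichlet ground state as a trial function is the clean way to do this; a purely radial bump on the annulus would also work but, as you note, brings in the Jacobian factor $r^{N-1}$ and makes the bookkeeping less transparent without changing the outcome.
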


This proposition can be proved by the similar method to that of \cite[Remark 3.1]{Wang14}. The details are omitted here.

\begin{lemma}\label{lem3.1}
Let $\Lambda=\min\{\lambda_1(h_\infty;a_1,d_1),\lambda_1(h_\infty;a_2,d_2)\}$. If $h_\infty<\infty$, then $\Lambda\geq0$.
\end{lemma}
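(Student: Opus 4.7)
The plan is a standard contradiction argument: assume $\Lambda<0$, so without loss of generality $\lm_1(h_\ty;a_1,d_1)<0$, and contradict Theorem \ref{the3.1}, which (since $h_\ty<\ty$) already gives $\|u(t,\cdot),v(t,\cdot)\|_{C([0,h(t)])}\to 0$. The device is a time-independent positive sub-solution of the $u$-equation on a ball slightly smaller than $B_{h_\ty}$, built from the first Dirichlet eigenfunction of (\ref{3.1}); it will force $u(t,0)$ to stay bounded below by a fixed positive constant, contradicting the uniform vanishing.

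By Proposition \ref{pr3.2}(i), choose $\ell<h_\ty$ so close to $h_\ty$ that $\lm_1(\ell;a_1,d_1)\le -2\ep_0<0$, and let $\psi$ be the associated radial, positive eigenfunction normalized by $\|\psi\|_\ty=1$. Using $h(t)\nearrow h_\ty$ and Theorem \ref{the3.1}, pick $t_0$ with $h(t_0)>\ell$ and $\bar c_1\|v(t,\cdot)\|_\ty\le\ep_0$ for all $t\ge t_0$; then pick $\delta>0$ with $\bar b_1\delta\le\ep_0$ and $\delta\psi(r)\le u(t_0,r)$ on $[0,\ell]$. The second condition on $\delta$ is feasible because $u(t_0,\cdot)$ is continuous and strictly positive on the compact $[0,\ell]$ (strong maximum principle combined with $\ell<h(t_0)$), while $\psi$ vanishes only at $r=\ell$, so $\inf_{[0,\ell]} u(t_0,\cdot)/\psi>0$.

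Setting $\underline u(r)=\delta\psi(r)$ and using $-d_1\Delta\psi=(a_1+\lm_1(\ell;a_1,d_1))\psi$, one computes
$$\underline u_t-d_1\Delta\underline u-\underline u(a_1-b_1\underline u-c_1 v)=\delta\psi\bbb(\lm_1(\ell;a_1,d_1)+b_1\delta\psi+c_1 v\bbb)\le\delta\psi(-2\ep_0+\ep_0+\ep_0)=0,$$
so $\underline u$ is a sub-solution of the $u$-equation on $[t_0,\ty)\times[0,\ell]$. Its boundary data lie below those of $u$: $\underline u_r(t,0)=0$ by radial symmetry of $\psi$, and $\underline u(t,\ell)=0<u(t,\ell)$ since $\ell<h(t)$ and $u>0$ strictly inside its support. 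The parabolic comparison principle then yields $u(t,r)\ge\delta\psi(r)$ on $[t_0,\ty)\times[0,\ell]$, hence $u(t,0)\ge\delta\psi(0)>0$ for all $t\ge t_0$, contradicting Theorem \ref{the3.1}. The case $\lm_1(h_\ty;a_2,d_2)<0$ is treated identically with the roles of $u,v$ (and of $b_1,c_1$ versus $b_2,c_2$) exchanged. The main technical point to watch is the joint choice of $\delta$ and $t_0$: one needs $\bar b_1\delta+\bar c_1\|v(t,\cdot)\|_\ty\le 2\ep_0$ \emph{and} $\delta\psi\le u(t_0,\cdot)$ simultaneously, and it is the strict inequality $\ell<h(t_0)$ together with the strong maximum principle that make this possible.
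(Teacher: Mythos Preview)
Your proof is correct and follows essentially the same contradiction strategy as the paper: assume $\lambda_1(h_\infty;a_1,d_1)<0$, use Theorem~\ref{the3.1} to make $v$ small, and then produce a positive lower barrier for $u$ on a fixed ball $B_\ell$ with $\ell<h_\infty$, contradicting $\|u(t,\cdot)\|\to 0$. The only difference is in the construction of the barrier: the paper compares $u$ with the solution $w$ of the auxiliary logistic problem $w_t-d_1\Delta w=w(a_1-c_1\varepsilon-b_1 w)$ on $[0,h(T)]$ and invokes its convergence to the positive steady state $W$, whereas you go one step lower and use a small multiple $\delta\psi$ of the principal eigenfunction as a stationary sub-solution; your route is marginally more elementary since it bypasses the global-attractor statement for the logistic equation.
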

\begin{proof} Here we use the contradiction argument. Assume that the result is false, then we have $\lambda_1(h_\infty;a_1,d_1)<0$ or $\lambda_1(h_\infty;a_2,d_2)<0$.

If $\lambda_1(h_\infty;a_1,d_1)<0$, by the continuity of $\lambda_1(h_\infty;a_1,d_1)$ with respect to $a_1(r)$, one can take sufficiently small $\varepsilon>0$  such that $\lambda_1(h_\infty;a_1-c_1\varepsilon,d_1)<0$. It follows from Theorem \ref{the3.1} that $\lim_{t\to\infty}\|v(t,\cdot)\|_{C([0,h(t)])}=0$ and $\lim_{t\to\infty}\|u(t,\cdot)\|_{C([0,h(t)])}=0.$   For any given  $\varepsilon$, in light of Proposition \ref{pr3.2},  there is $T\gg1$ such that $\lambda_1(h(T);a_1-c_1\varepsilon,d_1)<0$ and $v(t,r)\leq\varepsilon$ for all $t\geq T$ and $0\leq r\leq h(t)$. Let $w(t,r)$ be the unique positive solution of  the initial boundary problem
\begin{eqnarray*}
\left\{\begin{array}{ll}
w_t-d_1\Delta w=w(a_1(r)-c_1(r)\varepsilon-b_1(r)w),&t>T, \ 0\leq r<h(T),\\[1mm]
w_r(t,0)=0=w(t,h(T)),&t\geq T,\\[1mm]
w(T,r)=u(T,r),&0\leq r\leq h(T).
\end{array}
\right.
\end{eqnarray*}
By use of the comparison principle, it can be deduced that $u(t,r)\geq w(t,r)$ for all $t\geq T$ and $0\leq r\leq h(T)$. As $\lambda_1(h(T);a_1-c_1\varepsilon,d_1)<0$, we know that $w(t,r)\to W(r)$ as $t\to \infty$ uniformly on $[0,h(T)]$, where $W(r)$ is the unique positive solution of
\begin{eqnarray*}
\left\{\begin{array}{ll}
-d_1\Delta W=W(a_1(r)-c_1(r)\varepsilon-b_1(r)W),&0\leq r<h(T),\\[1mm]
W_r(0)=0=W(h(T)).
\end{array}
\right.
\end{eqnarray*}
As a result, $\liminf_{t\to\infty}u(t,r)\geq W(r)>0$ in $(0,h(T))$, which brings about a contradiction with the fact $\lim_{t\to\infty}\|u(t,\cdot)\|_{C([0,h(t)])}=0.$

If $\lambda_1(h_\infty;a_2,d_2)<0$, similar to the above, there exist $0<\delta\ll1$ and $\tau\gg1$ such that $\lambda_1(h(\tau);a_2-b_2\delta,d_2)<0$ and $u(t,r)\leq\delta$ for all $t\geq \tau$ and $0\leq r\leq h(t)$. Let $z(t,r)$ and $Z(r)$ denote the unique positive solution of
\begin{eqnarray*}
\left\{\begin{array}{ll}
z_t-d_2\Delta z=z(a_2(r)-b_2(r)\delta-c_2(r)z),&t>\tau, \ 0\leq r<h(\tau),\\[1mm]
z_r(t,0)=0=z(t,h(\tau)),&t\geq \tau,\\[1mm]
w(\tau,r)=v(\tau,r),&0\leq r\leq h(\tau)
\end{array}
\right.
\end{eqnarray*}
and the corresponding stationary problem
\begin{eqnarray*}
\left\{\begin{array}{ll}
-d_2\Delta Z=Z(a_2(r)-b_2(r)\delta-c_2(r)Z),&0\leq r<h(\tau),\\[1mm]
Z_r(0)=0=Z(h(\tau)),
\end{array}
\right.
\end{eqnarray*}respectively. In the same way as above, we have $\liminf_{t\to\infty}v(t,r)\geq\liminf_{t\to\infty}z(t,r)=Z(r)>0$ in $(0,h(\tau))$, and then derive a paradox. This finishes the proof.
\end{proof}

Next, we present a comparison principle which will help us to establish  the two species  vanishing under certain conditions. Its proof can be accomplished in a similar way to Lemma 5.1 in \cite{GW} (also see Lemma 3.5 of \cite{DL}), hence we omit the details.
\begin{proposition} \label{pr3.3}
Let $(u,v,h)$ be a solution of problem {\rm(\ref{1.1})}. Assume that $(\bar u,\bar v,\bar h)\in [C(\overline \mathcal{D})\cap C^{1,2}(\mathcal{D})]^2\times C^1([0,\infty))$ with $\mathcal{D}=\{(t,r):\,t>0,0\leq r<\bar h(t)\}$, satisfying
\begin{eqnarray*}
\left\{\begin{array}{lll}
 \bar u_t-d_1\Delta \bar u\geq \bar u(a_1(r)-b_1(r)\bar u), &t>0, \  0\leq r<\bar h(t),\\[1mm]
 \bar v_t-d_2\Delta \bar v\geq \bar  v(a_2(r)-c_2(r)\bar v),&t>0, \  0\leq r<\bar h(t),\\[1mm]
\bar u_r=0,\ \bar v_r=0,\ \ &t>0, \ r=0,\\[1mm]
 \bar u=\bar v=0, \ \bar h'(t)\geq-\mu(\bar u_r+\beta \bar v_r),\ \ &t>0,  \ r=\bar h(t), \\[1mm]
 \bar u(0,r)\geq0, \ \bar v(0,r)\geq0,& 0\leq r\leq \bar h(0).
 \end{array}\right.
\end{eqnarray*}
If $h_0\leq \bar h(0)$, $u_0(r)\leq \bar u(0,r)$, $v_0(r)\leq\bar v(0,r)$ for all $r\in[0,h_0]$, then
$$h(t)\leq\bar h(t)\ {\rm for\ all}\ t\geq0,\ u(t,r)\leq\bar u(t,r),\ v(t,r)\leq\bar v(t,r)\ {\rm for\ all}\ t\geq0,\ 0\leq r\leq h(t).$$
\end{proposition}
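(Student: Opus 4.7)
The plan is to imitate the standard free-boundary comparison argument as in Lemma 5.1 of \cite{GW} or Lemma 3.5 of \cite{DL}. The crucial algebraic input is that since $u,v\ge 0$ on the physical domain of $(u,v,h)$ and $c_1(r), b_2(r)\ge 0$, the solution of (\ref{1.1}) automatically satisfies the \emph{decoupled} upper-bound inequalities
\begin{eqnarray*}
u_t - d_1\Delta u \le u(a_1(r) - b_1(r)u),\qquad v_t - d_2\Delta v \le v(a_2(r) - c_2(r)v),
\end{eqnarray*}
so the hypotheses imposed on $(\bar u,\bar v)$---which deliberately ignore the cross coupling terms---are precisely what is needed for comparison.

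First I would reduce to the case of strict initial inequalities. For $\varepsilon>0$ small, solve the perturbed upper-barrier problem with $\bar h^\varepsilon(0)=\bar h(0)+\varepsilon$ and with $\bar u^\varepsilon(0,\cdot),\bar v^\varepsilon(0,\cdot)$ chosen to strictly dominate $u_0,v_0$ on $[0,h_0]$, producing $(\bar u^\varepsilon,\bar v^\varepsilon,\bar h^\varepsilon)$ defined on its own $\mathcal{D}^\varepsilon$. The conclusion for the original barrier will then follow by continuous dependence upon sending $\varepsilon\to 0^+$.

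Assuming now the initial inequalities are strict, set
\begin{eqnarray*}
t^*=\sup\{t>0:h(s)<\bar h(s)\text{ for all }s\in[0,t]\}.
\end{eqnarray*}
Continuity gives $t^*>0$, and I claim $t^*=\infty$. Suppose for contradiction that $t^*<\infty$, so $h(t^*)=\bar h(t^*)$ and on $[0,t^*]$ the domain $\{r\le h(t)\}$ lies inside $\{r\le \bar h(t)\}$. Put $w=\bar u-u$ and $z=\bar v-v$ on the common parabolic cylinder $\mathcal{Q}^*=\{(t,r):0\le t\le t^*,\;0\le r\le h(t)\}$. Using the decoupled inequalities above together with the mean-value theorem applied to the scalar logistic nonlinearities (Lipschitz on the bounded ranges supplied by (\ref{2.2})), one checks that $w$ and $z$ satisfy linear parabolic differential inequalities with bounded coefficients, together with the boundary data $w_r(t,0)=z_r(t,0)=0$, $w(t,h(t))=\bar u(t,h(t))\ge 0$, $z(t,h(t))=\bar v(t,h(t))\ge 0$, and strictly positive initial data $w(0,\cdot),z(0,\cdot)>0$ on $[0,h_0]$. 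The strong parabolic maximum principle then yields $w,z>0$ throughout $\mathcal{Q}^*$ (away from the parabolic corner). In particular $w(t^*,r),z(t^*,r)>0$ on $[0,h(t^*))$ while $w(t^*,h(t^*))=z(t^*,h(t^*))=0$, so Hopf's boundary-point lemma forces $w_r(t^*,h(t^*))<0$ and $z_r(t^*,h(t^*))<0$, equivalently $\bar u_r<u_r$ and $\bar v_r<v_r$ at $r=h(t^*)$. Combining with the two free-boundary conditions gives
\begin{eqnarray*}
\bar h'(t^*)\ge -\mu(\bar u_r+\beta\bar v_r)>-\mu(u_r+\beta v_r)=h'(t^*),
\end{eqnarray*}
which contradicts $h<\bar h$ on $[0,t^*)$ with equality at $t^*$ (which forces $h'(t^*)\ge\bar h'(t^*)$). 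Hence $h(t)<\bar h(t)$ for all $t>0$, and the pointwise comparisons $u\le\bar u$, $v\le\bar v$ on $\{0\le r\le h(t)\}$ follow by exactly the same parabolic maximum principle argument used above, now applied on the whole time axis.

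The main obstacle is handling the two moving boundaries simultaneously and ensuring that Hopf's lemma applies legitimately at the contact point $(t^*,h(t^*))$; once the reduction to strict initial data is made and the decoupled-upper-bound structure is exploited, the remainder is routine parabolic comparison. One technical wrinkle worth monitoring is that the linearized coefficients for $w$ and $z$ involve $\bar u,\bar v$ evaluated on $\mathcal{Q}^*\subset\overline{\mathcal{D}}$, which is why $(\bar u,\bar v)\in[C(\overline{\mathcal{D}})\cap C^{1,2}(\mathcal{D})]^2$ and the bound (\ref{2.2}) for $(u,v)$ are both needed to make the coefficients bounded.
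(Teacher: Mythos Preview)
Your proposal is correct and follows exactly the standard argument the paper invokes: the paper omits the proof entirely and simply cites Lemma~5.1 of \cite{GW} and Lemma~3.5 of \cite{DL}, which proceed via the same decoupling observation, strict-inequality reduction, contact-time argument, and Hopf lemma that you outline.

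One small technical point: your reduction to strict inequalities by ``solving the perturbed upper-barrier problem'' is not quite how the cited references do it, and as written it is a little imprecise---$(\bar u,\bar v,\bar h)$ is an arbitrary supersolution, not a solution of a well-posed problem, so there is nothing canonical to perturb. The cleaner device (used in \cite{DL}) is to leave the barrier alone and instead replace $\mu$ by $\mu-\varepsilon$ in problem~(\ref{1.1}), obtaining a solution $(u_\varepsilon,v_\varepsilon,h_\varepsilon)$; the free-boundary inequality for $\bar h$ is then \emph{strict} relative to $h_\varepsilon$, which makes the Hopf contradiction go through directly, and continuous dependence of $(u_\varepsilon,v_\varepsilon,h_\varepsilon)$ on $\varepsilon$ recovers the general case. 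This also handles $h_0=\bar h(0)$ without a separate argument. Apart from this cosmetic adjustment, your argument is the intended one.
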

\begin{lemma}\label{lem3.2}
Assume that $\lambda_1(h_0;a_1,d_1)>0$ and $\lambda_1(h_0;a_2,d_2)>0$. Then there exists a positive number $\mu_0$ dependent on $h_0,d_1,d_2,\beta$, $a_1,a_2,u_0$ and $v_0$, such that $h_\infty<\infty$ for any $0<\mu<\mu_0$.
\end{lemma}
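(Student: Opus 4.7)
The plan is to construct an explicit upper solution $(\bar u,\bar v,\bar h)$ for problem \eqref{1.1} whose free boundary $\bar h(t)$ stays bounded on $[0,\infty)$, and then invoke the comparison principle (Proposition \ref{pr3.3}) to conclude $h(t)\leq\bar h(t)$ for all $t\geq 0$, so that $h_\infty<\infty$.

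Since $\lambda_1(h_0;a_i,d_i)>0$ for $i=1,2$, continuous dependence on $\ell$ (Proposition \ref{pr3.2}) lets us pick a small $\delta>0$ for which
\begin{equation*}
\lambda_1(h_0(1+\delta);a_i,d_i)>0, \qquad i=1,2.
\end{equation*}
Let $\phi_i>0$ be the corresponding principal eigenfunctions on $B_{h_0(1+\delta)}$, normalized by $\max\phi_i=1$. The Hopf lemma gives $\phi_i'(h_0(1+\delta))<0$. Now set
\begin{equation*}
\bar h(t):=h_0\bigl(1+\delta-\tfrac{\delta}{2}e^{-\sigma t}\bigr),\qquad \bar u(t,r):=M e^{-\sigma t}\phi_1\!\bigl(rh_0(1+\delta)/\bar h(t)\bigr),
\end{equation*}
and $\bar v(t,r)$ analogously with $\phi_2$, where $\sigma>0$ and $M>0$ will be chosen. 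Clearly $\bar h(0)>h_0$, $\bar h(t)\nearrow h_0(1+\delta)$, and $\bar u(t,\bar h(t))=\bar v(t,\bar h(t))=0$.

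Three verifications are needed. First, one checks the differential inequalities $\bar u_t-d_1\Delta\bar u\geq\bar u(a_1-b_1\bar u)$ and $\bar v_t-d_2\Delta\bar v\geq\bar v(a_2-c_2\bar v)$. Expanding in radial coordinates with the rescaled argument, the leading term is $-\lambda_1(h_0(1+\delta);a_i,d_i)\bar u$ (respectively $\bar v$) plus extra convective pieces of order $\bar h'(t)/\bar h(t)=O(\sigma)$ and a self-limiting term of order $Me^{-\sigma t}$; taking $\sigma$ small and then using the $e^{-\sigma t}$ decay, these perturbations are absorbed into the positive spectral gap. Second, choose $M$ large enough so that $\bar u(0,r)\geq u_0(r)$ and $\bar v(0,r)\geq v_0(r)$ on $[0,h_0]$; this is possible because $\phi_i$ is bounded away from zero on the compact subinterval $[0,h_0/(1+\delta/2)]$, while $u_0,v_0$ are bounded. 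Third, the free boundary inequality becomes
\begin{equation*}
\tfrac{h_0\delta\sigma}{2}e^{-\sigma t}=\bar h'(t)\;\geq\;-\mu\bigl(\bar u_r+\beta\bar v_r\bigr)\big|_{r=\bar h(t)}=\mu e^{-\sigma t}\bigl[-M\phi_1'(h_0(1+\delta))-\beta M\phi_2'(h_0(1+\delta))\bigr]\tfrac{h_0(1+\delta)}{\bar h(t)},
\end{equation*}
which holds for all $t\geq 0$ provided $\mu\leq\mu_0$ for an explicit positive $\mu_0$ depending only on $h_0,d_1,d_2,\beta,a_1,a_2,u_0,v_0$. Proposition \ref{pr3.3} then yields $h(t)\leq\bar h(t)\leq h_0(1+\delta)$, hence $h_\infty\leq h_0(1+\delta)<\infty$.

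The main obstacle is the first verification: one must track carefully how the time-dependent rescaling $r\mapsto rh_0(1+\delta)/\bar h(t)$ inside $\phi_i$ interacts with the radial Laplacian, ensuring that the convective error terms produced by $\bar h'(t)$ are genuinely $O(\sigma)$ (uniformly in $r\in[0,\bar h(t)]$) so that they are dominated by the strictly positive eigenvalue $\lambda_1(h_0(1+\delta);a_i,d_i)$. Once that is secured, the remaining choices of $M$ and $\mu_0$ are straightforward, and the conclusion follows from the comparison principle.
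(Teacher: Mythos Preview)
Your construction and overall strategy match the paper's almost exactly (the paper uses the principal eigenfunctions on $B_{h_0}$ rather than on the enlarged ball $B_{h_0(1+\delta)}$, but this is a cosmetic difference). The obstacle you flag at the end is genuine and is precisely where the work lies; as written, your sketch does not close it.

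Two ingredients are missing. First, with $s=rh_0(1+\delta)/\bar h(t)$, the convective contribution to $\bar u_t-d_1\Delta\bar u-\bar u(a_1-b_1\bar u)$ is $-\bar u\cdot\dfrac{s\phi_1'(s)}{\phi_1(s)}\cdot\dfrac{\bar h'(t)}{\bar h(t)}$; saying this is ``$O(\sigma)$ uniformly in $r$'' requires $s\phi_1'(s)\le M_1\phi_1(s)$ on $[0,h_0(1+\delta))$ for some constant $M_1$, since $\phi_1$ vanishes at the endpoint. This is exactly the paper's estimate (3.2): near the boundary $\phi_1'<0$ by Hopf so the left side is negative, while on the complementary compact set $\phi_1$ is bounded below and $\phi_1'$ is bounded. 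Second, the differential inequality also produces the term $\bigl(h_0(1+\delta)/\bar h\bigr)^2 a_1(s)-a_1(r)$, which is \emph{not} of order $\sigma$; it is handled instead by choosing $\delta$ small and using the uniform continuity of $a_1$ on $[0,3h_0]$ (the paper's estimate (3.3)). Your sketch does not mention this term. Once (3.2) and (3.3) are inserted, your verification goes through and the proof coincides with the paper's.
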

\begin{proof}
Inspired by \cite{DG,ZX}, we are going to construct the suitable upper solutions and apply Proposition \ref{pr3.3} to derive the desired result.

Let $\psi_1(r)$, $\psi_2(r)$ be the positive eigenfunctions corresponding to $\lambda_1^1:=\lambda_1(h_0;a_1,d_1)$, $\lambda_1^2:=\lambda_1(h_0;a_2,d_2)$, respectively, and $\|\psi_1(r)\|_\infty=\|\psi_2(r)\|_\infty=1$. Define
\begin{eqnarray*}
&\bar h(t)=h_0(1+2\delta-\delta e^{-\sigma t}),\ t\geq0;\ \ \  s=s(t,r)=\frac{h_0r}{\bar h(t)},\ \ t\geq0,\ 0\leq r\leq \bar h(t);&\\
&\bar u(t,r)=Me^{-\sigma t}\psi_1(s),\ \ \bar v(t,r)=Me^{-\sigma t}\psi_2(s),\ \ t\geq0,\ 0\leq s\leq h_0,&
\end{eqnarray*}
where $\delta,\sigma,M$ are positive constants to be determined later.

We first point out that $\psi_i'(r)<0$, $i=1,2$ on $[h_0-\eta,h_0]$ for some $\eta>0$, and $\psi_i(r)>0$ on $[0,h_0-\eta]$. Thus it is not difficult to manifest that there exists a positive number $M_1$ such that  for $0\leq r<h_0$,
\begin{equation}\label{3.2}
r\psi_i'(r)\leq M_1\psi_i(r),\ i=1,2.
\end{equation}
On the other hand, due to the continuity of $a_1(r)$ and $a_2(r)$ on $[0,3h_0]$, we can verify that for any given $0<\varepsilon<1,$ there exists  $0<\delta_0\ll1$ such that for any $0<\delta\leq\delta_0$,
\begin{equation}\label{3.3}
\left|\frac{a_1(s)h_0^2}{\bar h^2(t)}-a_1(r)\right|\leq\varepsilon,\ \ \ \left|\frac{a_2(s)h_0^2}{\bar h^2(t)}-a_2(r)\right|\leq{\varepsilon},\ \ \forall\ t>0,\ 0\leq r<\bar h(t).
\end{equation}

By virtue of (\ref{3.2}), (\ref{3.3}) and the assumption $\lambda_1^1>0$, elementary computations generate
\begin{eqnarray}\label{3.4}
&&\bar u_t-d_1\Delta \bar u-\bar u(a_1(r)-b_1(r)\bar u) \nonumber\\[1mm]
&=&\bar u\left[-\sigma-\frac{s\psi_1'(s)\bar h'(t)}{\psi_1(s)\bar h(t)}-\frac{d_1\psi_1''(s)h_0^2}{\psi_1(s)\bar h^2(t)}-\frac{d_1(N-1)\psi_1'(s)h_0}{r\psi_1(s)\bar h(t)}-a_1(r)\right]+b_1(r)\bar u^2 \nonumber\\[1mm]
  &\geq& \bar u\left(-\sigma-\frac{s\psi_1'(s)h_0\delta\sigma e^{-\sigma t}}{\psi_1(s)\bar h(t)}-a_1(r)+\frac{h_0^2a_1(s)}{\bar h^2(t)}+\frac{h_0^2\lambda_1^1}{\bar h^2(t)}\right) \nonumber\\[1mm]
   &\geq& \bar u\left(-\sigma-M_1h_0\sigma-\varepsilon+{\lambda_1^1}/{9}\right)>0,\ \ \ \ \;\;\forall\ t>0,\ 0\leq r<\bar h(t)
\end{eqnarray}
provided $0<\varepsilon,\sigma\ll1$. Similarly, it  can be deduced that for all $t>0$ and $0\leq r<\bar h(t)$,
\begin{eqnarray}\label{3.5}
\bar v_t-d_2\Delta\bar v-\bar v(a_2(r)-c_2(r)\bar v)
  &\geq& \bar v\left(-\sigma-\frac{s\psi_2'(s)h_0\delta\sigma e^{-\sigma t}}{\psi_2(s)\bar h(t)}-a_2(r)+\frac{h_0^2a_2(s)}{\bar h^2(t)}+\frac{h_0^2\lambda_1^2}{\bar h^2(t)}\right) \nonumber\\[1mm]
   &\geq& \bar v\left(-\sigma-M_1h_0\sigma-\varepsilon+{\lambda_1^2}/{9}\right)>0
\end{eqnarray}
provided that $0<\varepsilon,\sigma\ll1$. For fixed $0<\delta\leq\delta_0$ and $0<\varepsilon,\sigma\ll1$, we can select sufficiently large positive number $M$ such that
\begin{equation}\label{c7}
u_0(r)\leq M \psi_1(r)=\bar u(0,r),\ \ v_0(r)\leq M \psi_2(r)=\bar v(0,r),\ \forall \ 0\leq r\leq h_0.
\end{equation}

Apparently,
\begin{equation}\label{c8}
\bar u_r(t,0)=0,\ \ \bar v_r(t,0)=0,\ \forall\ t>0.
\end{equation}
On the other hand,  it is easy to show that $\bar h'(t)=h_0\delta\sigma e^{-\sigma t}$ and
\begin{eqnarray*}
-\mu(\bar u_r+\beta\bar v_r)(t,\bar h(t))&=& -\mu\left[Me^{-\sigma t}\frac{h_0}{\bar h(t)}(\psi_1'(h_0)+\beta\psi_2'(h_0))\right] \\[1mm]
&\leq&\mu Me^{-\sigma t}(1+\beta)\max\{|\psi_1'(h_0)|,|\psi_2'(h_0)|\}
\end{eqnarray*}
since $\psi_1'(h_0),\psi_2'(h_0)<0$. Therefore, there is a positive real number $\mu_0$ such that for all $ 0<\mu\leq\mu_0,$
\begin{equation}\label{c9}
\bar h'(t)\geq -\mu(\bar u_r+\beta\bar v_r)(t,\bar h(t)),\ \ \forall \ t>0.
\end{equation}
Additionally, it is obvious that
\begin{equation}\label{3.9}
\bar u(t,\bar h(t))=\bar v(t,\bar h(t))=0,\ \ \forall\ t>0.
\end{equation}

Taking into account (\ref{3.4})-(\ref{3.9}), by means of  Proposition \ref{pr3.3} we can derive
$$h(t)\leq\bar h(t),\ u(t,r)\leq\bar u(t,r),\ v(t,r)\leq\bar v(t,r),\ \forall\ t\geq0,\ 0\leq r\leq h(t).$$
As a consequence, $h_\infty\leq \bar h(\infty)=h_0(1+2\delta)$ for any $0<\mu\leq\mu_0.$
This proof is completed.
\end{proof}

The following proposition can be proved in a similar manner to that of \cite[Proposition 3.1]{Wjde14} (or \cite[Lemma 3.6]{PZh}), so the proof will not be duplicated here.

\begin{proposition}\label{pr3.4}
Let $d$ and $C$ be positive constants.  For any given $g_0,R>0$, and any function $w_0\in C^2([0,g_0])$ satisfying $w_0'(0)=w_0(g_0)=0$ and $w_0>0$ in $[0,g_0)$, there exists $\bar\nu>0$  such that when $\nu>\bar\nu$, $(w,g)$ satisfies
\begin{eqnarray*}
\left\{\begin{array}{ll}
w_t-d\Delta w\geq Cw,&t>0,\ 0< r<g(t),\\[1mm]
w_r(t,0)=0=w(t,g(t)),&t>0,\\[1mm]
g'(t)=-\nu w_r(t,g(t)),&t>0,\\[1mm]
w(0,r)=w_0(r),\ g(0)=g_0,&0\leq r\leq g_0,
\end{array}\right.
\end{eqnarray*}
we must have $\liminf_{t\to\infty}g(t)>R$.
\end{proposition}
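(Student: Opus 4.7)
The plan is to apply the free-boundary comparison principle to a sub-solution $(\underline{w},\underline{g})$ designed so that $\underline{g}$ increases past $R$ in finite time. By the Hopf boundary point lemma, $w_r(t,g(t))<0$, so $g$ is strictly increasing and $g_\infty:=\lim_{t\to\infty}g(t)\in(g_0,\infty]$ exists; hence it suffices to exhibit a single $T>0$ with $g(T)>R$, since monotonicity then yields $\liminf_{t\to\infty}g(t)\geq g(T)>R$. The construction is the lower-solution analog of the upper-solution construction in Lemma \ref{lem3.2} and parallels \cite{Wjde14,PZh}.

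Fix $R_1>R$ and a smooth radial profile $\phi$ on $[0,R_1]$ with $\phi>0$ on $[0,R_1)$, $\phi(R_1)=0$, $\phi'(0)=0$, $\phi'(R_1)<0$, chosen so that the radial Laplacian at the boundary, $\Delta_y\phi(R_1)=\phi''(R_1)+(N-1)\phi'(R_1)/R_1$, is strictly positive (a polynomial of the form $\phi(y)=1+a(y/R_1)^2+b(y/R_1)^3+c(y/R_1)^4$ with appropriately chosen coefficients works). Set $\underline{g}(t)=\sqrt{g_0^2+2At}$ so that $\underline{g}\,\underline{g}'\equiv A$, pick $T=(R_1^2-g_0^2)/(2A)$ so $\underline{g}(T)=R_1>R$, and define $\underline{w}(t,r)=\delta\phi(rR_1/\underline{g}(t))$ with $\delta>0$ small enough that $\underline{w}(0,\cdot)\leq w_0$ on $[0,g_0]$ (possible because both functions vanish linearly at $r=g_0$ while $w_0>0$ inside). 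The boundary conditions $\underline{w}_r(t,0)=0$ and $\underline{w}(t,\underline{g}(t))=0$ are built in. In the variable $s=rR_1/\underline{g}(t)$, the PDE inequality $\underline{w}_t-d\Delta\underline{w}\leq C\underline{w}$ reduces to $As|\phi'(s)|-d\Delta_s\phi(s)\leq C\underline{g}(t)^2\phi(s)$, which holds near $s=R_1$ because $\Delta_s\phi(R_1)>0$ and $A$ is chosen small, and on the interior because $C\underline{g}(t)^2\phi(s)$ absorbs the bounded left-hand side once $\underline{g}$ is sufficiently large. The Stefan inequality $\underline{g}'(t)\leq-\nu\underline{w}_r(t,\underline{g}(t))$ simplifies to $A\leq\nu\delta R_1|\phi'(R_1)|$, which holds for all $\nu>\bar\nu:=A/(\delta R_1|\phi'(R_1)|)$. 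The free-boundary comparison principle (the one-species analog of Proposition \ref{pr3.3}) then gives $g(t)\geq\underline{g}(t)$ on $[0,T]$, so $g(T)\geq R_1>R$ as required.

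The main obstacle is the verification of the PDE inequality at the moving boundary. If one naively took $\phi$ to be the first Dirichlet eigenfunction of $-d\Delta$ on $B_{R_1}$, then $\Delta_y\phi(R_1)=0$ and the inequality would fail: differentiating $\underline{w}(t,\underline{g}(t))=0$ in $t$ forces $\underline{w}_t(t,\underline{g}(t))=|\underline{w}_r(t,\underline{g}(t))|\underline{g}'(t)>0$, while $d\Delta\underline{w}$ and $C\underline{w}$ both vanish there. Using instead a non-eigenfunction profile with strict convexity at $R_1$ repairs this mismatch. A secondary difficulty is that when $g_0$ is very small the interior portion of the inequality becomes tight; this can be handled by a short preliminary step that uses the positivity of $C$ together with a Harnack-type estimate on the super-solution $w$ to pass to an effective initial configuration on a larger fixed interval, on which the rescaled construction then applies directly.
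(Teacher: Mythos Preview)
Your overall strategy—build an explicit lower barrier $(\underline w,\underline g)$ with $\underline g(T)>R$ and invoke the one-species free-boundary comparison principle—is exactly the route taken in the references \cite{Wjde14,PZh} that the paper cites in lieu of a proof. Your diagnosis that a Dirichlet eigenfunction fails at the moving boundary (because $\Delta\psi$ vanishes there) and that one must use a profile with $\Delta_s\phi(R_1)>0$ is also correct.

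The gap is in the interior verification of $\underline w_t-d\Delta\underline w\le C\underline w$. After your rescaling the inequality becomes, essentially,
\[
A\,s|\phi'(s)|-d\,\Delta_s\phi(s)\;\le\; C\,\underline g(t)^{2}\,\phi(s),\qquad 0\le s\le R_1,\ \ 0\le t\le T.
\]
Near $s=R_1$ the choice $\Delta_s\phi(R_1)>0$ together with $A$ small handles things. But a positive radial profile with $\phi(R_1)=0$ cannot be subharmonic throughout $[0,R_1)$, so there is a subinterval on which $-d\Delta_s\phi>0$; there the inequality forces $-d\Delta_s\phi(s)\le C\underline g(t)^{2}\phi(s)$. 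At $t=0$ you have $\underline g(0)=g_0$, and $g_0$ is \emph{given}: it may be arbitrarily small compared with $d/C$ and the profile constants, so the right-hand side cannot in general absorb the left. Your remark that ``$C\underline g(t)^{2}\phi$ absorbs the bounded left-hand side once $\underline g$ is sufficiently large'' is true only for $t$ bounded away from $0$, whereas the barrier must be valid on all of $[0,T]$.

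The proposed repair via a preliminary Harnack step does not close the loop: to push $g$ past a prescribed threshold one needs a quantitative lower bound on $-w_r(t,g(t))$, and the hypothesis on $w$ yields no such control at the free boundary without essentially assuming what is to be proved. The standard remedy in \cite{Wjde14,PZh} is simpler and local to your construction: multiply the profile by a decaying exponential, taking $\underline w(t,r)=\delta\,e^{-\alpha t}\phi\big(rR_1/\underline g(t)\big)$. The extra term $-\alpha\underline w$ in $\underline w_t$ then dominates the interior defect once $\alpha$ is chosen large (depending on $g_0$, $d$, $C$, $\phi$); the boundary analysis is unchanged, and the Stefan inequality $\underline g'\le -\nu\underline w_r$ is recovered by enlarging $\bar\nu$ by a factor $e^{\alpha T}$. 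With that single modification your argument goes through.
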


In order to derive the criteria for spreading and vanishing, we define $$\Sigma=\{\ell>0:
 \lambda_1(\ell;a_1,d_1)=0\ {\rm\ or}\ \lambda_1(\ell;a_2,d_2)=0\}.$$
According to the monotonicity of $\lambda_1(\ell;a_1,d_1)$ and $\lambda_1(\ell;a_2,d_2)$ with respect to $\ell$, we easily know that $\Sigma$ contains at most two elements. If one of the functions $a_1$ and $a_2$ satisfies  condition {\bf(A1)} or condition {\bf(A2)}, it follows from Proposition \ref{p3.2} that $\Sigma\not=\emptyset$.

In what follows we regard $h_0$ and $\mu$ as the varying parameters to study the  criteria for spreading and vanishing. Suppose that $\Sigma\neq\emptyset$ and $h^*=\min\Sigma\in(0,\infty)$, i.e., either $\lambda_1(h^*;a_1,d_1)=0$ or $\lambda_1(h^*;a_2,d_2)=0$.
\begin{lemma}\label{lem3.3}
{\rm (i)} If $h_\infty<\infty$, then $h_\infty\leq h^*$.

{\rm (ii)} If $h_0<h^*$, then there exist two positive numbers $\mu_0$, $\mu^0$ such that $h_\infty\leq h^*$ for any $0<\mu\leq\mu_0$ and $h_\infty=\infty$ for any $\mu\geq\mu^0$.
\end{lemma}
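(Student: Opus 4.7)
\medskip\noindent\textbf{Part (i).} The plan is to combine Lemma \ref{lem3.1} with the strict monotonicity of the principal eigenvalue in $\ell$. Assuming $h_\infty<\infty$, Lemma \ref{lem3.1} yields $\min\{\lambda_1(h_\infty;a_1,d_1),\lambda_1(h_\infty;a_2,d_2)\}\geq 0$. Since $h^*=\min\Sigma$, there exists $i\in\{1,2\}$ with $\lambda_1(h^*;a_i,d_i)=0$, and Proposition \ref{pr3.2}(ii) gives $\lambda_1(\ell;a_i,d_i)<0$ for every $\ell>h^*$. Hence $h_\infty>h^*$ would contradict Lemma \ref{lem3.1}, and we conclude $h_\infty\leq h^*$.

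\medskip\noindent\textbf{Part (ii), small $\mu$.} By Proposition \ref{pr3.2}(ii)(iii), $\lambda_1(\cdot;a_i,d_i)$ is strictly decreasing in $\ell$ with $\lim_{\ell\to 0^+}\lambda_1(\ell;a_i,d_i)=+\infty$, and by definition of $h^*=\min\Sigma$ neither eigenvalue has reached zero for $\ell<h^*$. Thus $h_0<h^*$ gives $\lambda_1(h_0;a_i,d_i)>0$ for $i=1,2$, and Lemma \ref{lem3.2} provides $\mu_0>0$ with $h_\infty<\infty$ for $0<\mu\leq\mu_0$; Part (i) then yields $h_\infty\leq h^*$.

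\medskip\noindent\textbf{Part (ii), large $\mu$.} The strategy is to show that $h(t_0)>h^*$ for some finite $t_0$ once $\mu$ exceeds a threshold $\mu^0$, after which the contrapositive of Part (i) (using $h_\infty\in(0,\infty]$) forces $h_\infty=\infty$. Since $v_r(t,h(t))<0$, the free-boundary law yields $h'(t)\geq-\mu u_r(t,h(t))$, so any scalar sub-problem $(w,g)$ with $g(0)=h_0$, $g'(t)=-\mu w_r(t,g(t))$, $w(0,\cdot)\leq u_0$, and $w\leq u$ on $[0,g(t)]$ will satisfy $g(t)\leq h(t)$ by comparison. It therefore suffices to produce such $(w,g)$ meeting the hypotheses of Proposition \ref{pr3.4} and to invoke that proposition.

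\medskip The hard part is securing the positive-source condition $w_t-d_1\Delta w\geq Cw$ with $C>0$ required by Proposition \ref{pr3.4}, since the source $a_1(r)-b_1(r)u-c_1(r)v$ in the $u$-equation is not uniformly positive under hypothesis ({\bf H}). I would handle this via a localized-eigenfunction construction: pick $\ell>h^*$ with $\lambda_1^\ell:=\lambda_1(\ell;a_1,d_1)<0$ (available by Proposition \ref{pr3.2}(ii) since $\lambda_1(h^*;a_1,d_1)=0$), let $\phi>0$ be the associated Dirichlet eigenfunction on $B_\ell$, and take $w$ to be a small multiple $\delta\phi$ of $\phi$ supported on the expanding sub-domain $[0,g(t)]$ that matches the prescribed free-boundary motion. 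The identity $w_t-d_1\Delta w=(a_1+\lambda_1^\ell)w$ then shows that $w$ is a sub-solution of the $u$-equation whenever $b_1(r)w+c_1(r)v\leq|\lambda_1^\ell|$ on $\{w>0\}$; choosing $\delta>0$ small and controlling $v$ in the relevant region (via the a priori bound $v\leq K$ and, if necessary, by first restricting to a time interval on which $v$ is small) one gets $w_t-d_1\Delta w\geq(|\lambda_1^\ell|/2)w$, so Proposition \ref{pr3.4} applies with $C=|\lambda_1^\ell|/2$ and delivers the desired $\mu^0$.
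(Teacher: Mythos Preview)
Your arguments for Part (i) and for the small-$\mu$ half of Part (ii) are correct and coincide with the paper's proof.

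For the large-$\mu$ half of Part (ii), however, there is a genuine gap, and it stems from an over-literal reading of Proposition \ref{pr3.4}. As stated, that proposition carries the inequality $w_t-d\Delta w\geq Cw$ with $C>0$, but this is a sign slip: the result being quoted (Proposition 3.1 in \cite{Wjde14}, Lemma 3.6 in \cite{PZh}) allows the linear coefficient to be negative, i.e.\ the working hypothesis is $w_t-d\Delta w\geq -Cw$. The paper's own application confirms this: it introduces the auxiliary problem $\underline u_t-d_1\Delta\underline u=-C\underline u$ with $C=\|a_1-b_1u-c_1v\|_\infty$, applies Proposition \ref{pr3.4} to $(\underline u,\underline h)$ directly, and then compares $\underline h$ with $h$. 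No positive source is needed.

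Your proposed workaround --- building a sub-solution out of a Dirichlet eigenfunction on $B_\ell$ so as to manufacture a genuinely positive linear term --- does not close. The critical requirement is $b_1(r)w+c_1(r)v\leq|\lambda_1^\ell|$ on the support of $w$, and while $b_1w$ can be made small by shrinking $\delta$, the term $c_1(r)v$ cannot: $v$ is $O(1)$ at $t=0$ (it equals $v_0>0$) and there is no mechanism in (\ref{1.1}) forcing $v$ to become small on $[0,h(t)]$ at any later time. The suggestion to ``restrict to a time interval on which $v$ is small'' therefore has no content. A secondary issue is that $h^*$ is defined via \emph{either} eigenvalue, so $\lambda_1(h^*;a_1,d_1)=0$ need not hold; but even switching to the $v$-equation would not rescue the argument, since the analogous obstruction ($b_2(r)u$ not small) appears there. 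The fix is simply to use Proposition \ref{pr3.4} in its intended form with the crude bound $a_1-b_1u-c_1v\geq -C$, exactly as the paper does.
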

\begin{proof}
(i) Assume on the contrary that $h^*<h_\infty<\infty$. According to Proposition \ref{pr3.2}(ii) we derive either $\lambda_1(h_\infty;a_1,d_1)<0$ or $\lambda_1(h_\infty;a_2,d_2)<0$, which is in contradiction to the conclusion of Lemma \ref{lem3.1}. Therefore, $h_\infty\leq h^*$.

(ii) It follows from Lemma \ref{lem3.2} and part (i)  that when $h_0<h^*$, there is  a $\mu_0>0$ such that $h_\infty\leq h^*$ for $0<\mu\leq\mu_0$. In order to discuss the other assertion we consider the following auxiliary problem
\begin{eqnarray*}
\left\{\begin{array}{ll}
\underline u_t-d_1\Delta\underline u= -C\underline u,&t>0,\ 0\leq r<\underline h(t),\\[1mm]
\underline u_r(t,0)=0=\underline u(t,\underline h(t)),&t>0,\\[1mm]
\underline h'(t)=-\mu \underline u_r(t,\underline h(t)),&t>0,\\[1mm]
\underline u(0,r)=u_0(r),\ \underline h(0)= h_0,&0\leq r\leq  h_0,
\end{array}\right.
\end{eqnarray*}
where $C=\|a_1-b_1u-c_1v\|_\infty$. Utilizing Proposition \ref{pr3.4} we know that for any given $R>h^*$, there exists a $\mu^0>0$ such that $\underline h(\infty)>R$ for all $\mu\geq\mu^0$.  The comparison principle infers  $h(\infty)\geq \underline h(\infty)>h^*$. Hence the desired result follows from the discussion of part (i). The proof is ended.
\end{proof}
Finally, we present the sufficient conditions for spreading and vanishing, which is a principle theorem in this section.
\begin{theorem}\label{the3.2}
{\rm (i)} If $h_0\geq h^*$, then $h_\infty=\infty$ for all $\mu>0$.

{\rm (ii)} If $h_0< h^*$, then there exist  two positive numbers $\mu_*\leq\mu^*$ such that $h_\infty=\infty$ for any $\mu>\mu^*$, whereas $h_\infty\leq h^*$ for any $0<\mu\leq\mu_*$ or $\mu=\mu^*$.
\end{theorem}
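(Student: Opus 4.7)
My plan is to treat the two parts separately, leaning on Lemma~\ref{lem3.3} as the principal input and supplementing it with a continuous-dependence argument for the finer statement in (ii). For part (i) I would argue by contradiction: if $h_0\geq h^*$ and $h_\infty<\infty$, then Lemma~\ref{lem3.3}(i) forces $h_\infty\leq h^*$, whereas the strict monotonicity $h'(t)>0$ supplied by Theorem~\ref{t2.1} gives $h_\infty>h(0)=h_0\geq h^*$, a contradiction. Thus $h_\infty=\infty$ for every $\mu>0$.

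For part (ii), fix $h_0<h^*$ and, to emphasize the $\mu$-dependence, write $(u^\mu,v^\mu,h^\mu)$ for the solution of (\ref{1.1}) and $h_\infty^\mu=\lim_{t\to\infty}h^\mu(t)$. Put
\[
V=\{\mu>0:h_\infty^\mu\leq h^*\},\qquad S=\{\mu>0:h_\infty^\mu=\infty\}.
\]
By Lemma~\ref{lem3.3}(i) one has $V\cup S=(0,\infty)$ with $V\cap S=\emptyset$, while Lemma~\ref{lem3.3}(ii) guarantees that $V\supset(0,\mu_0]$ and $S\supset[\mu^0,\infty)$ for some $0<\mu_0\leq\mu^0$. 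I would then define
\[
\mu^*:=\sup V,\qquad \mu_*:=\sup\{\mu>0:(0,\mu]\subset V\},
\]
so that $0<\mu_0\leq\mu_*\leq\mu^*\leq\mu^0<\infty$. For any $\mu>\mu^*$, $\mu\notin V$ forces $\mu\in S$, i.e.\ $h_\infty^\mu=\infty$, which is the first half of (ii). The remaining claim---that $\mu_*$ and $\mu^*$ themselves lie in $V$---reduces to showing that $V$ is closed in $(0,\infty)$, equivalently that $S$ is open.

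The main obstacle is precisely this openness, which I plan to establish from continuous dependence of $h^\mu(t)$ on $\mu$ on bounded time intervals. Given $\mu\in S$, the definition of $h_\infty^\mu=\infty$ furnishes some $T>0$ with $h^\mu(T)>h^*$. Applying the straightening transformation (\ref{2.4}) from the proof of Theorem~\ref{t2.1}, I would recast (\ref{1.1}) on the fixed cylinder $[0,T]\times[0,1]$; the uniform estimates (\ref{2.2})--(\ref{2.3}) combined with standard $L^p$ and Schauder parabolic theory then render the solution map $\mu\mapsto(w^\mu,z^\mu,h^\mu)$ continuous into $C^{1+\gamma/2,2+\gamma}\times C^{1+\gamma/2,2+\gamma}\times C^{1+(1+\gamma)/2}([0,T])$. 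The strict inequality $h^{\mu'}(T)>h^*$ therefore persists for every $\mu'$ in a neighborhood of $\mu$, and Lemma~\ref{lem3.3}(i) upgrades $h_\infty^{\mu'}>h^*$ to $h_\infty^{\mu'}=\infty$, giving $\mu'\in S$. This continuous-dependence step is the only part of the argument whose verification is not already an immediate consequence of the results already at hand; note also that I do not need monotonicity of $h_\infty^\mu$ in $\mu$, which is why the theorem must allow the possibility $\mu_*<\mu^*$ rather than a single sharp threshold.
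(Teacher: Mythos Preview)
Your proposal is correct and follows essentially the same approach as the paper. Both arguments use the strict monotonicity $h'(t)>0$ together with Lemma~\ref{lem3.3}(i) for part~(i), and for part~(ii) both define $\mu^*=\sup V$ (the paper's $\mathcal{X}^*$) and $\mu_*$ as the supremum of the initial interval contained in $V$, then invoke continuous dependence of $h^\mu(T)$ on $\mu$ to conclude that $\mu^*,\mu_*\in V$; your framing via ``$S$ is open'' is a slight repackaging of the paper's contradiction argument, and you supply more detail on the continuous-dependence step, which the paper simply asserts.
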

\begin{proof}
(i) Due to the estimate (\ref{2.2}) it is easy to see that $h_\infty>h^*$ if $h_0\geq h^*$. Then it  follows from Lemma \ref{lem3.3}(i) that $h_\infty=\infty$ for all $\mu>0$.

(ii) The argument is essentially parallel to that of  Theorem 4.11 in \cite{KY}, but for completeness and the reader's convenience  we provide the details below.

Define $\mathcal{X}^*=\{\mu>0:\ h_\infty\leq h^*\}$. By means of Lemma \ref{lem3.3}(ii) we find that $(0,\mu_0]\subset\mathcal{X}^*$ and $\mathcal{X}^*\cap[\mu^0,\infty)=\emptyset$. Consequently, $\mu^*:=\sup\mathcal{X}^*\in[\mu_0,\mu^0]$. In view of this definition and Lemma \ref{lem3.3}(i) we know that $h_\infty=\infty$ when $\mu>\mu^*$. Therefore, $\mathcal{X}^*\subset(0,\mu^*]$.

We assert that $\mu^*\in\mathcal{X}^*$. Assume for contradiction that $h_\infty=\infty$ for $\mu=\mu^*$. Then we can select $T>0$ such that $h(T)>h^*$. In order to emphasize the dependence of the solution $(u,v,h)$ of (\ref{1.1}) on $\mu$, we now write $(u_\mu,v_\mu,h_\mu)$ instead of $(u,v,h)$. Hence we have $h_\mu(T)>h^*$. By virtue of the continuous dependence of $(u_\mu,v_\mu,h_\mu)$ on $\mu$, we can choose $\varepsilon>0$ small enough so that $h_\mu(T)>h^*$ for any $\mu\in[\mu^*-\varepsilon,\mu^*-\varepsilon]$. And then it can be derived that
$$\lim_{t\to\infty}h_\mu(t)>h_\mu(T)>h^*,\ \forall\ \mu\in[\mu^*-\varepsilon,\mu^*-\varepsilon],$$
which implies that $[\mu^*-\varepsilon,\mu^*-\varepsilon]\cap\mathcal{X}^*=\emptyset$. Thus we have $\sup\mathcal{X}^*\leq\mu^*-\varepsilon$, which is in contradiction to the definition of $\mu^*$. This proves our assertion that $\mu^*\in\mathcal{X}^*$.

Define $$\mathcal{X}_*=\{\kappa:\ \kappa\geq\mu_0\ {\rm such\ that}\ h_{\mu,\infty}\leq h^*\ {\rm for\ any}\ 0<\mu\leq\kappa\}$$
where $\mu_0$ is given in Lemma \ref{lem3.2}. Apparently, $\mu_*:=\sup\mathcal{X}_*\leq\mu^*$ and $(0,\mu_*)\subset\mathcal{X}_*$. Similar to the above, it can be shown that $\mu_*\in\mathcal{X}_*$. This completes the proof of Theorem \ref{the3.2}.
\end{proof}
\begin{remark}
For problem $(\ref{1.1})$, Theorem \ref{the3.2} does not provide any information for spreading success and spreading failure   when $\mu_*<\mu<\mu^*$. But for problem $(\ref{a1})$, we will give a threshold result for spreading-vanishing in section \ref{sec5}.
\end{remark}

\section{Long time behavior of $(u,v)$ for the spreading case} \label{sec4}
\setcounter{equation}{0}

The goal of this section is to deal with the long time behavior of $(u,v)$ for  spreading case: $h_\infty=\infty$. First of all we give the existence and uniqueness of positive solution to the logistic type elliptic equation
\begin{eqnarray}\label{4.1}
-d\Delta u=u(q(|x|)- p(|x|)u),& x\in\mathbb{R}^N,
\end{eqnarray}
Here, and in the following,  $d$ denotes given positive number, $q(|x|)$ and $p(|x|)$ are assigned functions in $C^\gamma([0,\infty))\cap L^\infty([0,\infty))$ with $0<\underline{p}\leq p(|x|)\leq \bar{p}<\infty\ {\rm in} \  [0,\infty).$

The following result is a special case of Theorem 7.12 in \cite{D}.
\begin{proposition}\label{pr4.1}
Suppose that there exist positive constants $\underline{q},$ $\bar{q}$ and $\rho\in(-2,0]$ such that
\begin{equation}\label{4.2}
{\underline{q}}=\liminf_{r\to\infty}\frac{q(r)}{r^\rho},\  \ \ {\bar{q}}=\limsup_{r\to\infty}\frac{q(r)}{r^\rho}.
\end{equation}
Then problem {\rm (\ref{4.1})} admits a unique positive  solution $u(r)$ satisfying
$$\underline{q}/{\bar{p}}\leq\liminf_{r\to\infty}\frac{u(r)}{r^\rho},\ \ \ \limsup_{r\to\infty}\frac{u(r)}{r^\rho}\leq \bar{q}/{\underline{p}}.$$
\end{proposition}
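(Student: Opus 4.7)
The plan is to reduce the problem on $\mathbb{R}^N$ to an increasing family of ball problems, use monotone convergence to produce a solution, and then bracket its growth at infinity by explicit comparison functions of the form $C r^\rho$. Since the result is quoted from \cite{D}, I only sketch the shape of the argument.

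First, for each $R>0$ large enough that $\lambda_1(R;q,d)<0$ (such $R$ exist because \eqref{4.2} with $\rho\in(-2,0]$ forces $q$ to be bounded below by a positive constant on some annulus, so Propositions \ref{pr3.2} and \ref{p3.2} apply), I would solve the truncated Dirichlet problem
\[
-d\Delta u = u(q(|x|)-p(|x|)u)\ \text{in}\ B_R,\qquad u=0\ \text{on}\ \partial B_R,
\]
by sub/supersolutions. A small positive multiple of the first eigenfunction of \eqref{3.1} on $B_R$ is a subsolution, and the constant $M=\|q\|_\infty/\underline p$ is a supersolution (here $\rho\le 0$ and \eqref{4.2} ensure $q$ is essentially bounded on $[0,\infty)$ together with $q\in L^\infty$). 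Standard theory yields a unique positive solution $u_R$ with $0<u_R\le M$.

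Second, I would show the family $\{u_R\}$ is monotone in $R$: if $R_1<R_2$, then $u_{R_2}$ restricted to $B_{R_1}$ is a supersolution on $B_{R_1}$ dominating $u_{R_1}$, giving $u_{R_1}\le u_{R_2}$. Combined with the uniform bound by $M$, this lets me set $u(x)=\lim_{R\to\infty}u_R(x)$. Interior $L^p$ and Schauder estimates upgrade the convergence to $C^2_{\rm loc}(\mathbb{R}^N)$, so $u$ solves \eqref{4.1} on all of $\mathbb{R}^N$, and $u>0$ by the strong maximum principle. Uniqueness among positive bounded solutions follows by a standard sweeping argument: if $\tilde u$ is another positive solution, then for $t\ge 1$ one has $t\tilde u\ge u_R$ on $\overline{B_R}$, and letting $t\downarrow 1$ (using the exact logistic nonlinearity) yields $\tilde u\ge u$; the reverse inequality is obtained by the same device after first showing that any positive solution stays bounded, using a rescaling argument together with the sign of $q$ at infinity.

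Third, and this is the genuinely delicate step, I would establish the asymptotic bounds
\[
\underline q/\bar p\le\liminf_{r\to\infty}u(r)/r^\rho,\qquad \limsup_{r\to\infty}u(r)/r^\rho\le\bar q/\underline p.
\]
For the upper bound, fix $\varepsilon>0$, pick $R_\varepsilon$ so that $q(r)\le(\bar q+\varepsilon)r^\rho$ and $p(r)\ge\underline p$ for $r\ge R_\varepsilon$, and try $\bar U(r)=A r^\rho$ with $A=(\bar q+\varepsilon)/\underline p$. A direct computation gives
\[
-d\Delta\bar U-\bar U(q(r)-p(r)\bar U)\ \ge\ r^\rho\bigl[-dA\rho(\rho+N-2)r^{-2}+A p(r)\bar U - A q(r)\bigr],
\]
and since $\rho\in(-2,0]$ the first bracket term is of lower order as $r\to\infty$, while the combination $Ap\bar U-Aq$ is nonnegative by the choice of $A$; adding a sufficiently large barrier on $\{r=R_\varepsilon\}$ and applying the comparison principle on $\{r>R_\varepsilon\}$ gives $u\le Ar^\rho$ for large $r$. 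The lower bound is symmetric, using $\underline U(r)=ar^\rho$ with $a$ slightly smaller than $\underline q/\bar p$ and comparing $\underline U$ with the solutions $u_R$ on large annuli. The main obstacle is controlling the lower-order Laplacian term uniformly in $\varepsilon$ and $R$; the condition $\rho>-2$ is exactly what is needed so that $r^{\rho-2}=o(r^\rho)$, allowing the barrier argument to close.
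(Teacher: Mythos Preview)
The paper gives no proof of this proposition at all; it merely records it as ``a special case of Theorem~7.12 in \cite{D}'' and moves on. So there is nothing to compare against beyond the citation, and your remark that the result is quoted is accurate.

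Your existence outline (ball exhaustion, monotonicity of $u_R$ in $R$, passage to the limit via interior estimates) is the standard route and is fine. The uniqueness sketch is thin but points in the right direction; in \cite{D} the sweeping argument actually uses the asymptotic bounds as input, since on an unbounded domain one needs both candidates to have comparable behaviour at infinity before one can slide one over the other.

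The genuine gap is in Step~4. For the upper bound you want to compare $u$ with $\bar U=Ar^\rho$ on $\{r>R_\varepsilon\}$, but when $\rho<0$ you cannot secure $u\le\bar U$ on the inner sphere: all you know is $u\le M=\|q\|_\infty/\underline p$, whereas $\bar U(R_\varepsilon)=AR_\varepsilon^\rho$ is small. Enlarging $A$ to force the boundary inequality makes $A$ depend on $R_\varepsilon$ and destroys the sharp constant; ``adding a sufficiently large barrier'' has the same defect. Moreover, on the unbounded exterior domain the maximum principle needs control of $u-\bar U$ at infinity, which is exactly what you are trying to prove. The lower-bound proposal fails for the symmetric reason: on an annulus $\{R'<r<R\}$ one has $u_R(R)=0<\underline U(R)$, so the comparison with $u_R$ does not start.

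What actually closes the argument (and what you can see carried out explicitly in the paper's own Proposition~\ref{pr4.2}) is not a direct barrier comparison but an iterative amplification: assuming $u(x_*)>m_* C|x_*|^\rho$ at some far-out point with $m_*>1$, one uses the maximum principle on a small ball of radius $\delta_0|x_*|^{-\rho/2}$ to produce a nearby point where the ratio exceeds $(1+c_0)m_*$, and iterates to a contradiction. The restriction $\rho>-2$ enters there to make $\lambda_1$ of those balls beat the coefficient $|x_*|^\rho$, not merely to make $r^{\rho-2}=o(r^\rho)$ in a supersolution inequality. Your sketch identifies $\rho>-2$ as the key hypothesis but attaches it to the wrong mechanism.
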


Let us point out that $(\ref{4.2})$ implies the assumption $({\bf A2})$. Besides, we denote the unique positive (radial) solution for problem $(\ref{4.1})$ by $\hat u(r)$, which will be used in the sequel.

\begin{theorem}\label{the4.2}
Suppose that $q(r)$  satisfies {\rm(\ref{4.2})}, and $\phi(r)\not\equiv0$ is a continuous, nonnegative and bounded function. Let $u(t,r)$ be the unique solution of the parabolic problem
\begin{eqnarray*}
\left\{\begin{array}{ll}
u_t-d\Delta u=u(q(r)-p(r) u),&t>0,\ 0\leq r<\infty,\\[1mm]
u_r(t,0)=0,&t\geq0,\\[1mm]
u(0,r)=\phi(r),&0\leq r<\infty.
\end{array}
\right.
\end{eqnarray*}
Then $\lim_{t\to\infty}u(t,r)=\hat u(r)$ uniformly on any compact subset of $[0,\infty)$.
\end{theorem}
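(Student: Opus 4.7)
The plan is to sandwich $u(t,r)$ between an upper and a lower barrier, each of which converges as $t\to\infty$ to $\hat u(r)$ on compact subsets, and then to use the uniqueness in Proposition \ref{pr4.1} to identify both limits with $\hat u$.

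For the upper barrier, I would fix a constant $M\ge \max\{\|\phi\|_\infty,\bar q/\underline p\}$, so that the constant $M$ is a stationary super-solution. For each $R>0$ let $\bar u_R(t,r)$ solve the logistic equation on $B_R$ with Dirichlet boundary value $M$ on $\partial B_R$ and initial value $\bar u_R(0,r)=M$. Then $\bar u_R$ is monotone nonincreasing in $t$ and, by standard arguments, converges to the unique positive stationary solution $U_R$ of the Dirichlet problem on $B_R$ with boundary value $M$. Parabolic comparison gives $u(t,r)\le \bar u_R(t,r)$ for $0\le r\le R$, hence $\limsup_{t\to\infty}u(t,r)\le U_R(r)$ on $[0,R]$. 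A second comparison in the domain variable shows that $U_R$ is nonincreasing in $R$; letting $R\to\infty$ and invoking interior elliptic estimates, $U_R$ converges to a bounded positive solution of (\ref{4.1}) on $\mathbb{R}^N$, which by Proposition \ref{pr4.1} must equal $\hat u$. This yields $\limsup_{t\to\infty}u(t,r)\le\hat u(r)$ uniformly on compact subsets.

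For the lower barrier, observe that hypothesis (\ref{4.2}) with $\rho\in(-2,0]$ implies condition {\bf(A2)} of Proposition \ref{p3.2}: take $\beta=\underline q/2$, any $k>1$ and any sequence $r_n\to\infty$. Therefore there exists $R_0>0$ such that $\lambda_1(R;q,d)<0$ for every $R>R_0$, and for such $R$ the Dirichlet problem $-d\Delta V=V(q-pV)$ on $B_R$ with $V=0$ on $\partial B_R$ has a unique positive solution $V_R$. Since $\phi\not\equiv 0$, the strong parabolic maximum principle gives $u(1,r)>0$ for $0\le r\le R$, and by Hopf's lemma at $r=R$ one finds $\varepsilon>0$ with $u(1,r)\ge\varepsilon V_R(r)$ on $[0,R]$. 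Because $\varepsilon V_R$ is a stationary sub-solution on $B_R$ for every $0<\varepsilon\le 1$, the Dirichlet parabolic problem with initial data $\varepsilon V_R$ is monotone nondecreasing in $t$ and converges to $V_R$; comparing it with $u(t+1,\cdot)$ on $B_R$ (where $u\ge 0\,{=}\,$ Dirichlet value of the comparison problem on $\partial B_R$) yields $\liminf_{t\to\infty}u(t,r)\ge V_R(r)$ for $r\in[0,R]$. Finally, $V_R$ is nondecreasing in $R$ and bounded above by the constant $M$, so it converges as $R\to\infty$ to a bounded positive solution of (\ref{4.1}); by Proposition \ref{pr4.1} this limit is $\hat u$, and hence $\liminf_{t\to\infty}u(t,r)\ge\hat u(r)$ uniformly on compact subsets.

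The two one-sided inequalities combine to give the desired uniform convergence on compact sets. I expect the main obstacle to be the lower bound: one must carefully translate the growth condition (\ref{4.2}) into the eigenvalue information $\lambda_1(R;q,d)<0$ via Proposition \ref{p3.2}, produce a positive sub-solution $\varepsilon V_R$ genuinely dominated by $u(1,\cdot)$ (Hopf's lemma is what handles the boundary point $r=R$), and then justify the monotone convergence $V_R\nearrow\hat u$ through interior elliptic estimates and the uniqueness statement in Proposition \ref{pr4.1}. The upper bound is by contrast relatively routine, once $M$ is chosen large enough to dominate both the initial data and the super-solution level $\bar q/\underline p$.
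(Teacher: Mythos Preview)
Your strategy---sandwich $u$ between monotone barriers and identify both limits with $\hat u$ via the uniqueness in Proposition~\ref{pr4.1}---is exactly the paper's. The lower barrier is identical: condition (\ref{4.2}) feeds into Proposition~\ref{p3.2} to give $\lambda_1(R;q,d)<0$ for large $R$, one picks the Dirichlet solution $V_R$ and a sub-solution $\varepsilon V_R$ below $u(1,\cdot)$, and then lets $R\to\infty$ with $V_R\nearrow\hat u$. For the upper barrier the paper takes a slightly shorter route than yours: it solves the whole-space problem with constant initial data $C=\max\{\|\phi\|_\infty,\|q\|_\infty/\underline p\}$, observes that this solution is nonincreasing in $t$, and passes to the limit directly; your detour through bounded domains $B_R$ with boundary value $M$ also works but requires the extra monotonicity of $U_R$ in $R$.

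One small slip to fix: for the constant $M$ to be a stationary super-solution you need $q(r)\le p(r)M$ for all $r\ge 0$, so you should take $M\ge \|q\|_\infty/\underline p$ (as the paper does) rather than $M\ge \bar q/\underline p$. The quantity $\bar q=\limsup_{r\to\infty}q(r)/r^\rho$ from (\ref{4.2}) controls only the tail behaviour of $q$ and need not dominate $q(r)/p(r)$ at finite $r$, especially when $\rho<0$. With that correction your argument goes through.
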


\begin{proof}
 Taking account of Proposition \ref{p3.2}, the condition (\ref{4.2}) results in $\lambda_1(\infty;q,d)<0$. Consequently, for $\ell\gg1$, the elliptic problem
\begin{eqnarray}\label{4.3}
\left\{\begin{array}{ll}
-d\Delta u=u(q(|x|)- p(|x|)u),& x\in B_\ell,\\[1mm]
u(|x|)=0,&x\in \partial B_\ell
\end{array}\right.
\end{eqnarray}
admits a unique positive (radial) solution, denoted by $\hat u_\ell(r)$.

From the positivity of parabolic equations it follows that $u(t,r)>0$ for  all $t>0$ and $r\geq0$. Thus we may suppose that $\phi(r)>0$ for all $r\geq0$. For $\ell\gg1$, let $u_\ell(t,r)$ be the unique solution of the initial-boundary value problem
\begin{eqnarray}\label{4.4}
\left\{\begin{array}{ll}
u_t-d\Delta u=u(q(r)- p(r)u),&t>0,\ 0\leq r<\ell,\\[1mm]
u_r(t,0)=0=u(\ell),&t\geq0,\\[1mm]
u(0,r)=\phi(r),&0\leq r\leq\ell.
\end{array}
\right.
\end{eqnarray}
By the comparison principle we derive
\begin{equation}\label{4.5}
u(t,r)\geq u_\ell(t,r), \ \ \forall\ t\geq0,\ 0\leq r\leq\ell.
\end{equation}

 Let $\psi(r)$ denote the  positive eigenfunction corresponding to $\lambda_1(\ell;q,d)<0$.  Then it is not hard to verify that  $\delta_1\psi(r)$ is a lower solution of problem (\ref{4.3}) if $\delta_1$ is a sufficiently small positive number. According to the above analysis on $\phi(r)$, we know that there exists a sufficiently small $\delta_2>0$ so that $\delta_2\psi(r)\leq\phi(r)$ on $[0,\ell]$. If  $\delta=\min\{\delta_1,\delta_2\}$, then $\delta\psi(r)\leq\phi(r)$ on $[0,\ell]$ and is a lower solution of (\ref{4.3}). On the other hand, it is evident that a suitably large $C>0$ is an upper solution of (\ref{4.3}). Let $\bar u_\ell(t,r)$ and $\underline u_\ell(t,r)$ be the unique solution of problem (\ref{4.4}) with $\phi(r)=C$ and $\phi(r)=\delta\psi(r)$, respectively. With the aid of the comparison principle one can derive
\begin{equation}\label{4.6}
\bar u_\ell(t,r)\geq u_\ell(t,r)\geq \underline u_\ell(t,r),\ \ \forall\ t\geq0,\ 0\leq r\leq\ell,
\end{equation}
and $\bar u_\ell(t,r)$ is decreasing and $\underline u_\ell(t,r)$ is increasing with respect to $t$. Furthermore, it follows that both $\lim_{t\to\infty}\bar u_\ell(t,r)=\bar u_\ell(r)$ and $\lim_{t\to\infty}\underline u_\ell(t,r)=\underline u_\ell(r)$ are positive solutions of (\ref{4.3}). Due to the uniqueness one can achieve $\bar u_\ell(r)\equiv\underline u_\ell(r)\equiv\hat u_\ell(r)$. Combining this with (\ref{4.6}) and (\ref{4.5}) gives
\begin{equation}\label{4.7}
\liminf_{t\to\infty} u(t,r)\geq \hat u_\ell(r)\ \ {\rm uniformly\ on\ }[0,\ell].
\end{equation}
 In addition, by virtue of the regularity theory and compactness argument, we can deduce that $\hat u_\ell(r)\to\hat u(r)$ in $C^{2+\gamma}_{\rm loc}([0,\infty))$ as $\ell\to\infty$. Therefore, it follows from (\ref{4.7}) that
\begin{equation}\label{4.8}
\liminf_{t\to\infty}u(t,r)\geq\hat u(r)\ \ {\rm uniformly\ on\ any\ compact\ subset\ of}\  [0,\infty).
\end{equation}

Let $C=\max\{\|\phi\|_\infty,\|q\|_\infty/\underline{p}\}$ and $u_C(t,r)$ be the unique solution of
\begin{eqnarray*}
\left\{\begin{array}{ll}
u_t-d\Delta u=u(q(r)- p(r)u),&t>0,\ 0\leq r<\infty,\\[1mm]
u_r(t,0)=0,&t\geq0,\\[1mm]
u(0,r)=C,&0\leq r<\infty.
\end{array}
\right.
\end{eqnarray*}
Then it is not too difficult to obtain that $u_C(t,r)$ is decreasing with respect to $t$, $u_C(t,r)\geq u(t,r)$ for all $t\geq0$ and $r\geq0$, $u_C(t,r)\geq\hat u(r)$ for all $t\geq0$ and $r\geq0$ because of $\hat u(r)<C$, and $\lim_{t\to\infty}u_C(t,r)=\tilde u(r)$ uniformly on any compact subset of $[0,\infty)$, where $\tilde u(r)$ is some positive solution of (\ref{4.1}). By the uniqueness of solution for problem (\ref{4.1}), we easily derive that
\begin{equation}\label{4.9}
\limsup_{t\to\infty}u(t,r)\leq\hat u(r)\ \ {\rm uniformly\ on\ any\ compact\ subset\ of}\  [0,\infty).
\end{equation}
The desired result immediately follows from (\ref{4.8}) and (\ref{4.9}). The proof is finished.
\end{proof}

Let $q_i(r)\in C^\gamma([0,\infty))$, $i=1,\ 2$. Assume that there exist $\rho\in(-2,0]$, and positive constants $\underline{q}_i$ and $\bar{q}_i$, such that $$
\underline{q}_i=\liminf_{r\to\infty}\frac{ q_i(r)}{r^\rho},\ \ \  \bar{q}_i=\limsup_{r\to\infty}\frac{q_i(r)}{r^\rho},$$
By means of Proposition \ref{pr4.1}, problem (\ref{4.1}) with $q(r)$ replaced by $ q_i(r)$  admits a unique positive solution, denoted by $u_i(r)$, which satisfies
\begin{equation}\label{4.10}
\underline{q}_i/{\bar{p}}\leq\liminf_{r\to\infty}\frac{u_i(r)}{r^\rho},\ \ \ \limsup_{r\to\infty}\frac{u_i(r)}{r^\rho}\leq \bar{q}_i/{\underline{p}}.
\end{equation}
\begin{proposition}\label{pr4.2}
If $q_i(r)$ $(i=1,2)$  satisfies the above assumptions, and $q_1(r)\leq q_2(r)$ for all $r\geq0$, then
$$u_1(r)\leq u_2(r),\ \ \forall\ r\geq0.$$
\end{proposition}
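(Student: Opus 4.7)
The plan is to reduce Proposition \ref{pr4.2} to the long-time convergence result Theorem \ref{the4.2}. The crucial observation is that $u_2$ is automatically a stationary super-solution for the logistic equation with coefficient $q_1$: since $q_1\le q_2$ and $u_2>0$ on $[0,\infty)$, one has
$$-d\Delta u_2 \;=\; u_2\bigl(q_2(r)-p(r)u_2\bigr)\;\geq\; u_2\bigl(q_1(r)-p(r)u_2\bigr)$$
pointwise. So running the parabolic flow associated with $q_1$ from initial datum $u_2$ should stay below $u_2$, while by Theorem \ref{the4.2} it converges to $u_1$, yielding $u_1\leq u_2$.

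First I would verify that $u_2$ is a valid initial datum for Theorem \ref{the4.2}: by (\ref{4.10}) with $\rho\in(-2,0]$, together with continuity of $u_2$ on $[0,\infty)$, $u_2$ is continuous, nonnegative, non-trivial and bounded (if $\rho<0$ it even decays). Consider the Cauchy problem
\begin{eqnarray*}
\left\{\begin{array}{ll}
u_t-d\Delta u=u(q_1(r)-p(r)u),& t>0,\ 0\le r<\infty,\\[1mm]
u_r(t,0)=0,& t\ge 0,\\[1mm]
u(0,r)=u_2(r),& 0\le r<\infty,
\end{array}\right.
\end{eqnarray*}
with unique solution $u(t,r)$. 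The parabolic comparison principle applied with the time-independent super-solution $u_2$ (which matches the initial data) yields $u(t,r)\le u_2(r)$ for all $t\ge 0$, $r\ge 0$. Meanwhile, Theorem \ref{the4.2} with $q=q_1$ and $\phi=u_2$ gives $u(t,r)\to u_1(r)$ as $t\to\infty$ uniformly on compact subsets of $[0,\infty)$. Passing to the limit in $u(t,r)\le u_2(r)$ delivers $u_1(r)\le u_2(r)$.

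The main obstacle is invoking the parabolic comparison principle on the unbounded domain $[0,\infty)$; this is standard for bounded solutions with bounded coefficients, but a cleaner alternative would be to argue entirely through the Dirichlet-ball approximation already used in the proof of Theorem \ref{the4.2}. Let $\hat u_{i,\ell}(r)$ denote the unique positive solution of (\ref{4.3}) with $q$ replaced by $q_i$. Starting the monotone iteration described in the proof of Theorem \ref{the4.2} from a common constant super-solution, the inequality $q_1\le q_2$ is preserved at every iteration step, giving $\hat u_{1,\ell}(r)\le \hat u_{2,\ell}(r)$ on $[0,\ell]$. Letting $\ell\to\infty$ along the $C^{2+\gamma}_{\rm loc}$ convergence $\hat u_{i,\ell}\to u_i$ already established in the proof of Theorem \ref{the4.2} then yields $u_1\le u_2$ on $[0,\infty)$, sidestepping the unbounded-domain comparison entirely.
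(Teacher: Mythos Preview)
Your argument is correct and takes a genuinely different route from the paper's. The paper proves Proposition~\ref{pr4.2} by a direct elliptic method in three steps: (1) a delicate local iteration lemma showing that if $u_1(x_*)>m_*u_2(x_*)$ at some far-away point then $u_1>(1+c_0)m_*u_2$ at a nearby point (using the Berestycki--Nirenberg--Varadhan maximum principle on small balls); (2) iterating this to a contradiction to conclude $u_1\le u_2$ for large $|x|$; (3) a bounded-domain comparison (Lemma~5.6 in \cite{D}) to propagate the inequality inward.

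Your approach instead leverages Theorem~\ref{the4.2} (or its ingredients), which is logically prior and independent of Proposition~\ref{pr4.2}, so there is no circularity. Both your parabolic variant and the Dirichlet-ball variant work; the latter is indeed cleaner since the comparison $\hat u_{1,\ell}\le\hat u_{2,\ell}$ on $B_\ell$ is completely standard and the convergence $\hat u_{i,\ell}\to u_i$ in $C^2_{\rm loc}$ is already established inside the proof of Theorem~\ref{the4.2} (monotonicity in $\ell$, uniform bound, elliptic regularity, plus uniqueness from Proposition~\ref{pr4.1}). The trade-off: your route is shorter and avoids the intricate pointwise iteration, but it relies on the global uniqueness in Proposition~\ref{pr4.1} and on the parabolic/approximation machinery already developed; the paper's argument is a self-contained elliptic comparison technique that is reusable in contexts where a nice parabolic convergence theorem or global uniqueness may not be readily available.
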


\begin{proof} For convenience, we denote $g(x)=g(r)$, where $g$ may be one of the functions $p,q_i,u_i, i=1,2$. The proof will be divided into three steps.

{\it Step 1.} It will be shown that there exists $L>1$ large enough so that, if $|x_*|>L$ and $u_1(x_*)>m_*u_2(x_*)$ for some $m_*\geq m>1$, then we can find $y_*\in \mathbb{R}^N$, and positive constants $c_0=c_0(L, m)$ and $\delta_0=\delta_0(L,m)$ independent of $x_*$ and $m_*$, such that
$$|y_*-x_*|=\delta_0x_*^{-\rho/2},\ \ \ u_1(y_*)>(1+c_0)m_*u_2(y_*).$$

By virtue of the assumptions for $q_i(x)$ and (\ref{4.10}), it is easy to see that for all large $L>1$ and $|x|>L$,
\begin{equation}\label{4.11}
\frac{\underline{q}_i} 2|x|^\rho<q_i(x)<2 \bar{q}_i|x|^\rho,\ \  \frac12\underline{p}<p(x)<2\bar{p},\ \ \frac{\underline{q}_i}{2\bar{p}}{|x|^\rho}<{u_i(x)} <\frac{2\bar{q}_i}{\underline{p}}{|x|^\rho},\ \ i=1,2.
\end{equation}
We now fix $L>1$ large enough such that $L^{-1-\rho/2}<1/2$ and (\ref{4.11}) holds for any $|x|>L/2$. Define
$$\Omega_0=\{x\in\mathbb{R}^N:\ u_1(x)>m_*u_2(x)\}\cap B_\delta(x_*),$$
where $\delta=\delta_0x_*^{-\rho/2}$, $B_\delta(x_*)=\{x\in\mathbb{R}^N:\ |x-x_*|<\delta\}$, and $\delta_0\in(0,1)$ is to be determined later. On account of $|x_*|>L$ and our choice of $L$, $x\in \Omega_0$ implies
\begin{equation}\label{4.12}
|x_*|/2<|x|<3|x_*|/2.
\end{equation}

Next, we consider $u_1(x)-m_*u_2(x)$ in $\Omega_0$.  By virtue of (\ref{4.11}), (\ref{4.12}) and the assumption that $u_1(x)>m_*u_2(x)$ in $\Omega_0$, it can be deduced that for $x\in \Omega_0$,
  \begin{eqnarray*}
-d\Delta(u_1-m_*u_2)&=&q_1(x)u_1- p(x)u_1^2-m_*(q_2(x)u_2- p(x)u_2^2) \\[1mm]
&\leq&  q_2(x)(u_1-m_*u_2)-p(x)(m_*^2u_2^2-m_*u_2^2)\\[1mm]
&\leq&2\bar{q}_2|x|^\rho(u_1-m_*u_2)-(\underline{p}/8\bar{p})
\underline{q}_2^2m_*(m_*-1)|x|^{2\rho}\\[1mm]
&\leq& 2^{1-\rho} \bar{q}_2 |x_*|^\rho(u_1-m_*u_2)-(\underline{p}/8\bar{p})\underline{q}_2^2
(3/2)^{2\rho}m_*(m_*-1)|x_*|^{2\rho}\\[1mm]
   &\leq& C_*|x_*|^\rho(u_1-m_*u_2)-c_*m_*|x_*|^{2\rho},
\end{eqnarray*}
where $C_*=2^{1-\rho} \bar{q}_2$ and  $c_*=(\underline{p}/8\bar{p})\underline{q}_2^2(3/2)^{2\rho} (m-1)$.

Now we define
$$w(x)=(2dN)^{-1}c_*m_*|x_*|^{2\rho}(\delta^2-|x-x_*|^2).$$
Obviously, $w(x)>0$ in $B_\delta(x_*)$ and $-\Delta w(x)=d^{-1}c_*m_*|x_*|^{2\rho}$. It follows that for $x\in \Omega_0$,
\begin{equation}\label{4.13}
-d\Delta(u_1-m_*u_2+w)\leq C_*|x_*|^\rho(u_1-m_*u_2)\leq C_*|x_*|^\rho(u_1-m_*u_2+w).
\end{equation}

Let $\lambda_1(\Omega)$ denote the first eigenvalue of $-\Delta$ over $\Omega$ under homogeneous Dirichlet boundary conditions. Then
$$\lambda_1(\Omega_0)\geq\lambda_1(B_\delta(x_*))=\delta^{-2}\lambda_1(B_1(x_*)).$$
By use of $\delta=\delta_0|x_*|^{-\rho/2}$, it is easy to get that  $\lambda_1(\Omega_0)\geq \delta_0^{-2}|x_*|^{\rho}\lambda_1(B_1(x_*))$. Note that $\lambda_1(B_1(x_*))$ is independent of $x_*$. We now select $\delta_0\in(0,1)$ small enough such that $\delta_0^{-2}\lambda_1(B_1(x_*))>C_*$ and hence $\lambda_1(\Omega_0)>C_*|x_*|^{\rho}$. Then making use of the maximum principle (Theorem 2.8 in \cite{BNV}) and (\ref{4.13}), we derive
$$u_1(x_*)-m_*u_2(x_*)+w(x_*)\leq\max_{\partial \Omega_0}(u_1-m_*u_2+w).$$
It can be seen that the maximum of $(u_1-m_*u_2+w)$ over $\partial \Omega_0$ has to be achieved by some $y_*\in \partial B_\delta(x_*)$ since $y\in\partial \Omega_0\setminus\partial B_\delta(x_*)$ satisfies, by the definition of $\Omega_0$, $u_1(y)=m_*u_2(y)$ and hence
$$u_1(y)-m_*u_2(y)+w(y)=w(y)\leq w(x_*)<u_1(x_*)-m_*u_2(x_*)+w(x_*).$$
Consequently we can take $y_*\in\partial \Omega_0$ satisfying $|y_*-x_*|=\delta$ (thus $w(y_*)=0$) so that
\begin{eqnarray*}
u_1(y_*)-m_*u_2(y_*)&=&u_1(y_*)-m_*u_2(y_*)+w(y_*)\\[1mm]
&\geq&u_1(x_*)-m_*u_2(x_*)+w(x_*)\\[1mm]
&>&w(x_*)=(2dN)^{-1}c_*m_*|x_*|^{2\rho}\delta^2\\[1mm]
&=&(2dN)^{-1}c_*m_*\delta_0^2|x_*|^{\rho}\geq c_1m_*y_*^\rho,
\end{eqnarray*}
where $c_1=(2dN)^{-1}c_*\delta_0^22^\rho>0$, and (\ref{4.12}) has been used. By means of (\ref{4.11}), it follows that
$$u_1(y_*)-m_*u_2(y_*)\geq c_1m_*y_*^\rho\geq c_1(\underline{p}/2\bar{q}_2)m_*u_2(y_*).$$
Therefore we can choose $c_0=c_1(\underline{p}/2\bar{q}_2)$ and obtain our desired results.

{\it Step 2.} We  show that $u_1(x)\leq u_2(x)$ for  sufficiently large $|x|>0$.
Let
$$m_0=\inf\{m>0:\ u_1(x)\leq mu_2(x),\ \forall\ |x|\gg1\},\ i.e.,\ m_0=\limsup_{|x|\to\infty}\frac{u_1(x)}{u_2(x)}.$$
In view of (\ref{4.10}) we know that $m_0$ is finite. If $m_0\leq1$, then $u_1(x)\leq u_2(x)$ for  $|x|\gg1$.

Suppose by way of contradiction that $m_0>1$. Then there exist a constant $\tilde m\in(1,m_0)$ and a sequence $\{x_n\}$, with $|x_n|\to \infty$, such that
$$\frac{u_1(x_n)}{u_2(x_n)}>\tilde m, \ \ \ {\rm for\ }n=1,2,\cdots.$$
On the other hand, we can find an integer $j>1$ such that
$$(1+c_0)^j\tilde m>\sup_{|x|>L}\frac{u_1(x)}{u_2(x)}.$$
Since $|x_n|\to\infty$, there exists $n_0$ large enough so that
$|x_{n_0}|(1/2)^j>L$. Taking $x_*=x_{n_0}$ and $m_*=\tilde m$ in {\it Step 1}, we can find $y_*=y_1$ such that $$|y_1-x_*|=\delta_0|x_*|^{-\rho/2},\ \ u_1(y_1)>(1+c_0)\tilde mu_2(y_1).$$
Thanks to $L^{-1-(\rho/2)}<1/2$, it follows that
$|y_1|\geq |x_*|-\delta_0|x_*|^{-\rho/2}\geq |x_{n_0}|(1-L^{-1-\rho/2})>L.$
We now take $x_*=y_1$ and $m_*=(1+c_0)\tilde m$, and then can pick $y_2$ such that
$$|y_2-y_1|=\delta_0|y_1|^{-\rho/2},\ \ u_1(y_2)>(1+c_0)^2\tilde mu_2(y_2).$$
Moreover, $|y_2|\geq\frac12 |y_1|\geq(\frac12)^2|x_{n_0}|>L$.

Repeating this procedure, we must derive $y_j$ satisfying
$$u_1(y_j)>(1+c_0)^j\tilde mu_2(y_j),\ \ |y_j|\geq(\frac 12)^j |x_{n_0}|>L.$$
As a consequence
$$\frac{u_1(y_j)}{u_2(y_j)}\geq (1+c_0)^j\tilde m>\sup_{|x|>L}\frac{u_1(x)}{u_2(x)}.$$
This is a contradiction.

{\it Step 3.} It follows from {\it Step 2} that there exists $\ell>0$ sufficiently large such that $u_1(x)\leq u_2(x)$ for all $|x|>\ell$. Since $q_1(x)\leq q_2(x)$ for all $x\in\mathbb{R}^N$, it is clear that for any $R>\ell$,
$$-d\Delta u_1-q_1(|x|)u_1+ p(|x|)u_1^2=0\leq-d\Delta u_2-q_1(|x|)u_2+ p(|x|)u_2^2, \ x\in B_R(0),$$
and $\limsup_{r\to R}(u_1^2-u_2^2)\leq0.$ By means of the comparison principle (Lemma 5.6 in \cite{D}) we  can derive that  $u_1(x)\leq u_2(x)$  for all $x\in B_R(0)$.
The proof is finished.
\end{proof}

\begin{theorem}\label{theor4.2}
Assume that there exist $\rho\in(-2,0]$, and  positive numbers $\underline{a}_i,$ $\bar{a}_i$
 such that
$$
\underline{a}_i=\liminf_{r\to\infty}\frac{a_i(r)}{r^\rho},\ \ \ \bar{a}_i=\limsup_{r\to\infty}\frac{a_i(r)}{r^\rho}
$$
and $$\underline{a}_2\underline{b}_1-\bar{a}_1\bar{b}_2>0, \ \ \  \underline{a}_1\underline{c}_2-\bar{a}_2\bar{c}_1>0.$$
Then the problem
\begin{equation}\label{4.14}
\left\{\begin{array}{lll}
-d_1\Delta u=u(a_1(r)-b_1(r)u-c_1(r)v) & {\rm in \ }\mathbb{R}^N,\\[1mm]
-d_2\Delta v=v(a_2(r)-b_2(r)u-c_2(r)v)&{\rm in \ }\mathbb{R}^N
\end{array}\right.
\end{equation}
admits a positive solution. Furthermore, any positive solution  $(u,v)$ of $(\ref{4.14})$ fulfills
\begin{equation}\label{4.15}
\bar u(r)\geq u(r)\geq\underline u(r),\ \ \bar v(r)\geq v(r)\geq\underline v(r), \ \forall \ r\geq0,
\end{equation}
where $\bar u$, $\bar v$, $\underline u$ and $\underline v$ will be given in the following proof.
\end{theorem}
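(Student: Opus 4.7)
The plan is to produce four scalar logistic benchmarks $\bar u,\bar v,\underline u,\underline v$ in $\mathbb R^N$ and to show that every positive solution of (\ref{4.14}) lies between $(\underline u,\underline v)$ and $(\bar u,\bar v)$; existence of a positive solution will then follow from a monotone iteration between the ordered pairs $(\underline u,\bar v)$ and $(\bar u,\underline v)$.

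I first define $\bar u$ and $\bar v$ as the unique positive solutions of the decoupled problems
\[
-d_1\Delta \bar u=\bar u(a_1(r)-b_1(r)\bar u),\qquad -d_2\Delta\bar v=\bar v(a_2(r)-c_2(r)\bar v)\ \ \text{in}\ \mathbb R^N,
\]
whose existence and asymptotic bounds $\limsup \bar u/r^\rho\le\bar a_1/\underline b_1$, $\limsup\bar v/r^\rho\le\bar a_2/\underline c_2$ come from Proposition \ref{pr4.1}. Then I take $\underline u$ (resp.\ $\underline v$) as the unique positive solution of the scalar logistic equation with modified growth coefficient $a_1-c_1\bar v$ (resp.\ $a_2-b_2\bar u$) and weight $b_1$ (resp.\ $c_2$). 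To apply Proposition \ref{pr4.1} to these, the new growth coefficients must still satisfy (\ref{4.2}) with strictly positive liminf, and this is precisely where the two hypotheses enter: for instance
\[
\liminf_{r\to\infty}\frac{a_1(r)-c_1(r)\bar v(r)}{r^\rho}\ge\underline a_1-\bar c_1\cdot\frac{\bar a_2}{\underline c_2}=\frac{\underline a_1\underline c_2-\bar a_2\bar c_1}{\underline c_2}>0,
\]
and analogously $\liminf(a_2-b_2\bar u)/r^\rho\ge(\underline a_2\underline b_1-\bar a_1\bar b_2)/\underline b_1>0$. Proposition \ref{pr4.2} applied with $q_1=a_1-c_1\bar v\le a_1=q_2$ and common weight $p=b_1$ then yields $\underline u\le\bar u$, and symmetrically $\underline v\le\bar v$.

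Because $c_1,b_2\ge 0$, the pairs $(\bar u,\underline v)$ and $(\underline u,\bar v)$ form ordered coupled super/sub-solutions of (\ref{4.14}) in the quasi-monotone non-increasing sense: inserting any admissible $v\in[\underline v,\bar v]$ into the $u$-equation turns $\bar u$ into a super-solution and $\underline u$ into a sub-solution of the resulting scalar problem, and symmetrically for the $v$-equation. I would run the standard monotone iteration first on balls $B_\ell$, producing two monotone sequences with uniform bounds $\underline u\le\cdot\le\bar u$, $\underline v\le\cdot\le\bar v$; interior Schauder estimates and a diagonal argument in $\ell\to\infty$ then deliver a positive solution $(u^*,v^*)$ of (\ref{4.14}) satisfying (\ref{4.15}). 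For an arbitrary positive solution $(u,v)$ of (\ref{4.14}), I observe that $-d_1\Delta u\le u(a_1-b_1u)$ since $c_1v\ge 0$. Let $U(t,r)$ solve the parabolic logistic equation with $U(0,\cdot)=u$; at $t=0$, $U_t=d_1\Delta u+u(a_1-b_1u)\ge c_1vu\ge 0$, and the maximum principle applied to the linear equation satisfied by $W=U_t$ gives $U_t\ge 0$ throughout, so $U(t,\cdot)$ is non-decreasing in $t$. By Theorem \ref{the4.2}, $U(t,\cdot)\to\bar u$, forcing $u\le\bar u$; symmetrically $v\le\bar v$. Feeding $v\le\bar v$ back into the $u$-equation makes $u$ a super-solution of the equation defining $\underline u$; an analogous parabolic argument starting from a sufficiently large constant, decreasing monotonically to $\underline u$, traps $u$ from below, yielding $u\ge\underline u$, and likewise $v\ge\underline v$.

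The step I expect to be the main obstacle is this last one: running parabolic comparison on the unbounded $\mathbb R^N$ without boundary data requires that $W=U_t$ be bounded for the maximum principle to apply and that the hypotheses of Theorem \ref{the4.2} be met by the initial datum $u$. I will therefore need to justify that any positive solution $u$ of the elliptic problem is automatically bounded—either via a maximum principle at infinity using the asymptotic growth of $a_1$, or via a preliminary comparison with $\bar u$ on large balls using the machinery of Proposition \ref{pr4.2}. A secondary technical point is that the monotone iteration in the existence step must be performed on truncations $B_\ell$ with suitable boundary data, then passed to the limit via the uniform $L^\infty$ bounds from $\bar u,\bar v$ and interior Schauder estimates.
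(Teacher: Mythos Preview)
Your construction of $\bar u,\bar v,\underline u,\underline v$, the verification that the perturbed growth rates $a_1-c_1\bar v$ and $a_2-b_2\bar u$ still fall under Proposition~\ref{pr4.1} thanks to the two sign hypotheses, and the existence argument via coupled ordered upper/lower solutions on balls $B_\ell$ followed by a compactness passage $\ell\to\infty$, are exactly what the paper does (down to the choice of boundary data on $\partial B_\ell$).

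The only divergence is in the enclosure step (\ref{4.15}) for an \emph{arbitrary} positive solution. The paper obtains it in one line by invoking Proposition~\ref{pr4.2}: since $u$ solves $-d_1\Delta u=u\bigl((a_1-c_1v)-b_1u\bigr)$ with $a_1-c_1v\le a_1$, the elliptic comparison gives $u\le\bar u$; feeding this into the $v$-equation yields $v\ge\underline v$, and symmetrically $v\le\bar v$, $u\ge\underline u$. Your parabolic detour through Theorem~\ref{the4.2} reaches the same conclusion but is heavier, and it is precisely this detour that generates the boundedness worry you flag at the end. In fact your own fallback suggestion---``a preliminary comparison with $\bar u$ on large balls using the machinery of Proposition~\ref{pr4.2}''---\emph{is} the paper's route. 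So the proposal is correct, just with an unnecessary loop; dropping the parabolic argument and going straight through Proposition~\ref{pr4.2} aligns you with the paper and dissolves the obstacle you were anticipating.
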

\begin{proof}
{\it Step 1.} The construction of $\bar u$, $\bar v$, $\underline u$ and $\underline v$.

By Proposition \ref{pr4.1}, the problem
\begin{eqnarray}\label{4.16}
 -d_1\Delta u=u(a_1(r)-b_1(r)u)\ \ {\rm in \ }\mathbb{R}^N
\end{eqnarray}
has a unique positive solution, denoted by $\bar u(r)$, satisfying $$\frac{\underline{a}_1}{\bar{b}_1}\leq\liminf_{r\to\infty}\frac{\bar u(r)}{r^\rho},\ \ \ \limsup_{r\to\infty}\frac{\bar u(r)}{r^\rho}\leq \frac{\bar{a}_1}{\underline{b}_1}. $$
Since $\underline{a}_2\underline{b}_1-\bar{a}_1\bar{b}_2>0$, we have $$\liminf_{r\to\infty}\frac{a_2(r)-b_2(r)\bar u(r)}{r^\rho}>0.$$
Again making use of Proposition \ref{pr4.1} we know that the problem
\begin{eqnarray}\label{4.17}
 -d_2\Delta v=v(a_2(r)-b_2(r)\bar u-c_2(r)v) \ \ {\rm in \ }\mathbb{R}^N
\end{eqnarray}
admits a unique positive solution, denoted by $\underline v(r)$. Similar to the above we easily see that the problems
\begin{eqnarray}\label{4.18}
 -d_2\Delta v=v(a_2(r)-c_2(r)v) \ \ {\rm in \ }\mathbb{R}^N
\end{eqnarray}
and
\begin{eqnarray}\label{4.19}
 -d_1\Delta u=u(a_1(r)-b_1(r) u-c_1(r)\bar v) \ \ {\rm in \ }\mathbb{R}^N
\end{eqnarray}
possess unique positive solutions $\bar v(r)$ and  $\underline u(r)$, respectively.

Applying the comparison principle (Proposition \ref{pr4.2}) asserts that $\underline u(r)\leq \bar u(r),\ \underline v(r)\leq \bar v(r)$ for all $r\geq0$.

{\it Step 2.} Existence of positive solution to problem (\ref{4.14})

We know from {\it Step 1} that $\underline u,\ \underline v,\ \bar u$ and $\bar v$ are the coupled ordered lower and upper solutions of problem (\ref{4.14}). For any given $\ell>0$, it is evident that
$\underline u,\ \underline v,\ \bar u$ and $\bar v$ are also the coupled ordered lower and upper solutions of the problem
\begin{equation}\label{4.20}
\left\{\begin{array}{lll}
-d_1\Delta u=u(a_1(r)-b_1(r)u-c_1(r)v) &{\rm in \ }B_\ell,\\[1mm]
-d_2\Delta v=v(a_2(r)-b_2(r)u-c_2(r)v)&{\rm in \ }B_\ell,\\[1mm]
 u(\ell)=\bar u(\ell),\ v(\ell)=\underline v(\ell).
\end{array}\right.
\end{equation}
With the aid of the standard upper and lower solutions argument we conclude that problem (\ref{4.20}) admits at least one positive solution, denoted by $(u_\ell, v_\ell)$, satisfying
$$\underline u(r)\leq u_\ell(r)\leq \bar u(r),\ \ \underline v(r)\leq v_\ell(r)\leq \bar v(r),\ \forall\ 0\leq r\leq\ell.$$
Taking advantage of the regularity theory and compactness argument infers that there exists a pair of $(u,v)$ such that $(u_\ell,v_\ell)\to (u,v)$ in $[C^2_{\rm loc}([0,\infty))]^2$ as $\ell\to\infty$ and $(u,v)$ solves (\ref{4.14}).

By virtue of Proposition \ref{pr4.2}, it can be deduced that any positive solution  $(u,v)$ of problem $(\ref{4.14})$ fulfills (\ref{4.15}). The proof is ended.
\end{proof}
Employing the comparison principle (Proposition \ref{pr4.2}),  regularity theory and compactness argument, we can demonstrate the following conclusion.
\begin{proposition}\label{pr4.4}
Suppose that $0<\epsilon\ll1$ and  $q(r)$ satisfies $(\ref{4.2})$. If   $u_\varepsilon^{\pm}(r)$ is the unique positive solution of problem
\begin{eqnarray*}
-d\Delta u=u(q(r)\pm \varepsilon r^{\rho}- p(r)u)\ \ {\rm in \ }\mathbb{R}^N,
\end{eqnarray*} then $\lim_{\varepsilon\to0}u_\varepsilon^{\pm}(r)=\hat u(r)$ uniformly on any compact subset of $[0,\infty)$.
\end{proposition}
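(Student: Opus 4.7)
My plan is to combine the comparison principle of Proposition \ref{pr4.2}, interior elliptic regularity, and the uniqueness part of Proposition \ref{pr4.1}. For $0 < \varepsilon \ll 1$, the perturbed rates $q(r) \pm \varepsilon r^\rho$ still satisfy hypothesis $(\ref{4.2})$, with new constants $\underline{q} \pm \varepsilon$ and $\bar{q} \pm \varepsilon$ (all positive since $\varepsilon \ll \underline{q}$), so Proposition \ref{pr4.1} produces unique positive solutions $u_\varepsilon^{\pm}$. Because $q - \varepsilon r^\rho \le q \le q + \varepsilon r^\rho$ and all three functions share the same asymptotic exponent $\rho$, Proposition \ref{pr4.2} yields the sandwich
\[ u_\varepsilon^-(r) \le \hat u(r) \le u_\varepsilon^+(r), \qquad \forall\, r \ge 0. \]
Moreover, for $0 < \varepsilon_1 < \varepsilon_2$ the same proposition gives $u_{\varepsilon_1}^+ \le u_{\varepsilon_2}^+$ and $u_{\varepsilon_1}^- \ge u_{\varepsilon_2}^-$, so pointwise monotone limits exist: $u_\varepsilon^+ \searrow u^+$ with $u^+ \ge \hat u$, and $u_\varepsilon^- \nearrow u^-$ with $u^- \le \hat u$.

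Next I would upgrade this pointwise convergence to $C^2_{\mathrm{loc}}$ convergence via standard elliptic estimates. On any compact $K \subset [0,\infty)$, the family $\{u_\varepsilon^\pm\}_{0<\varepsilon\le\varepsilon_0}$ is uniformly bounded (by monotonicity, $u_\varepsilon^+ \le u_{\varepsilon_0}^+$ and $u_\varepsilon^- \le \hat u$, and both $u_{\varepsilon_0}^+, \hat u$ are bounded on $K$), and the equation coefficients are uniformly bounded in $L^\infty(K)$. Interior Schauder estimates then yield uniform $C^{2+\gamma}$ bounds on any smaller compact subset of $[0,\infty)$, and Arzel\`a--Ascoli produces $C^2_{\mathrm{loc}}$-convergent subsequences. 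Since the pointwise monotone limit forces a unique cluster point, the whole family converges in $C^2_{\mathrm{loc}}([0,\infty))$, and the limits $u^\pm$ are classical positive solutions of $-d\Delta u = u(q(r) - p(r) u)$ in $\mathbb{R}^N$.

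To identify $u^\pm \equiv \hat u$, I would invoke the uniqueness assertion in Proposition \ref{pr4.1}. From $\hat u \le u^+ \le u_\varepsilon^+$ together with the asymptotic bounds Proposition \ref{pr4.1} assigns to $\hat u$ and $u_\varepsilon^+$, one obtains
\[ \underline{q}/\bar{p} \le \liminf_{r \to \infty} u^+(r)/r^\rho, \qquad \limsup_{r \to \infty} u^+(r)/r^\rho \le (\bar{q}+\varepsilon)/\underline{p}, \]
and letting $\varepsilon \to 0$ shows $u^+$ fulfills the growth conditions required by Proposition \ref{pr4.1} for the unperturbed problem; hence $u^+ \equiv \hat u$. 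A symmetric argument gives $u^- \equiv \hat u$. The desired uniform convergence on compact subsets of $[0,\infty)$ is then immediate from the $C^2_{\mathrm{loc}}$ convergence already established.

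The one delicate point is this last identification step: one must verify that the asymptotic estimates at infinity persist under the passage to the limit in $\varepsilon$, so that the uniqueness clause of Proposition \ref{pr4.1} applies to $u^\pm$. Everything else is a routine combination of comparison, monotonicity, and interior regularity.
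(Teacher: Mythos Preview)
Your proposal is correct and follows precisely the route the paper indicates: it explicitly says the result is obtained by ``employing the comparison principle (Proposition~\ref{pr4.2}), regularity theory and compactness argument,'' which is exactly your monotonicity--sandwich, interior Schauder, and Arzel\`a--Ascoli scheme. Your added care in verifying the asymptotic bounds for $u^\pm$ before invoking the uniqueness clause of Proposition~\ref{pr4.1} is a reasonable precaution, though in fact the uniqueness there (from \cite[Theorem~7.12]{D}) holds among all positive solutions, so the identification $u^\pm\equiv\hat u$ is immediate once you know $u^\pm$ solves~(\ref{4.1}).
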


\begin{theorem}\label{the4.1}
Let $a_1(r)$ and $a_2(r)$ be as in Theorem $\ref{theor4.2}$.
If  $h(\infty)=\infty$ and $(u(t,r), v(t,r),h(t))$ is the solution of problem {\rm(\ref{1.1})}, then the following inequalities hold uniformly on any compact subset of $[0,\infty)$,
\begin{eqnarray}
\underline  u(r)\leq\liminf_{t\to\infty}u(t,r),&&\ \limsup_{t\to\infty}u(t,r)\leq\bar u(r),\label{4.21}\\[1mm]
\underline v(r)\leq\liminf_{t\to\infty}v(t,r),&&\ \limsup_{t\to\infty}v(t,r)\leq\bar v(r).\label{4.22}
\end{eqnarray}
Here $\underline u(r)$, $\bar u(r)$, $\underline v(r)$ and $\bar v(r)$ are given in the proof of Theorem $\ref{theor4.2}$.
\end{theorem}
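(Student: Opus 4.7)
The overall plan is a standard two-stage iteration: first derive the upper bounds $\limsup u\le\bar u$, $\limsup v\le\bar v$ by comparison with scalar logistic equations, then feed these upper estimates into the equations for $v$ and $u$ respectively and construct subsolutions on expanding balls to obtain the lower bounds. The hypothesis $h(\infty)=\infty$ enters by ensuring that any fixed radius $L$ eventually lies inside $[0,h(t)]$, so that the free boundary is harmless for comparison on compacts.

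For the upper bounds, let $U(t,r)$ solve the entire-space scalar problem
\begin{eqnarray*}
\left\{\begin{array}{ll}
U_t-d_1\Delta U=U(a_1(r)-b_1(r)U), & t>0,\ r\ge 0,\\[1mm]
U_r(t,0)=0,\ U(0,r)=K, & r\ge 0,
\end{array}\right.
\end{eqnarray*}
with $K$ the $L^\infty$ bound from Theorem \ref{t2.1}. Since $u_t-d_1\Delta u\le u(a_1-b_1u)$, setting $w=U-u$ in $\{0\le r\le h(t)\}$ gives $w_t-d_1\Delta w\ge[a_1-b_1(U+u)]w+c_1uv$, with $w_r(t,0)=0$, $w(t,h(t))=U(t,h(t))\ge 0$ and $w(0,\cdot)\ge 0$, so the linear maximum principle yields $u\le U$. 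Theorem \ref{the4.2} applied to $U$ (with $q=a_1$, $p=b_1$, valid since $a_1$ satisfies (\ref{4.2})) gives $U(t,\cdot)\to\bar u$ uniformly on compacts, hence $\limsup_{t\to\infty}u(t,r)\le\bar u(r)$. The estimate $\limsup v\le\bar v$ is obtained identically by dropping the $-b_2u$ term.

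For the lower bound of $v$, fix $\epsilon>0$ small and $L>0$ large. By the upper bound just proved and $h(t)\to\infty$, choose $T_1$ so that $h(T_1)>2L$ and $u(t,r)\le\bar u(r)+\epsilon$ on $[T_1,\infty)\times[0,L]$. Then on this set
\begin{equation*}
v_t-d_2\Delta v\ge v\big(a_2(r)-b_2(r)(\bar u(r)+\epsilon)-c_2(r)v\big).
\end{equation*}
Let $\tilde v$ be the unique solution of the corresponding fixed-boundary problem on $B_L$ with Neumann condition at $r=0$, Dirichlet condition at $r=L$, and initial datum $\psi_L\le v(T_1,\cdot)$ with $\psi_L(L)=0$, $\psi_L>0$ on $[0,L)$. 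Standard comparison gives $\tilde v\le v$ on $[T_1,\infty)\times[0,L]$. Because the hypothesis $\underline a_2\underline b_1-\bar a_1\bar b_2>0$ together with $\limsup\bar u(r)/r^\rho\le\bar a_1/\underline b_1$ ensures that $a_2-b_2(\bar u+\epsilon)$ still satisfies (\ref{4.2}) with positive liminf for $\epsilon$ small, the principal eigenvalue $\lambda_1(L;a_2-b_2(\bar u+\epsilon),d_2)$ is negative for $L$ large, and a standard monotone-iteration/parabolic convergence argument (as in the proof of Theorem \ref{the4.2}) yields $\tilde v(t,\cdot)\to V_L$, the unique positive solution of the corresponding stationary problem on $B_L$. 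Letting $L\to\infty$ and invoking regularity plus the uniqueness asserted by Proposition \ref{pr4.1}, $V_L\to\underline v_\epsilon$ (the positive entire-space solution of $-d_2\Delta v=v(a_2-b_2(\bar u+\epsilon)-c_2v)$) uniformly on compacts. Finally, using Proposition \ref{pr4.2} (to get $\underline v_\epsilon\le\underline v$ and monotonic dependence) together with a compactness argument and the uniqueness of the entire solution, $\underline v_\epsilon\to\underline v$ as $\epsilon\to 0$ uniformly on compacts, giving $\liminf_{t\to\infty}v(t,r)\ge\underline v(r)$. The lower bound $\liminf u\ge\underline u$ is obtained symmetrically: use $\limsup v\le\bar v$ to derive $u_t-d_1\Delta u\ge u(a_1-b_1u-c_1(\bar v+\epsilon))$, and run the same ball-approximation scheme, now appealing to $\underline a_1\underline c_2-\bar a_2\bar c_1>0$ to ensure well-posedness of the limiting elliptic problems.

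The main technical obstacle is the triple limit $t\to\infty$, $L\to\infty$, $\epsilon\to 0$: one must ensure that the stationary solutions $V_L$ actually exist (handled by the sign-positivity of the perturbed $q$'s guaranteed by the structural inequalities $\underline a_2\underline b_1-\bar a_1\bar b_2>0$ and $\underline a_1\underline c_2-\bar a_2\bar c_1>0$), that $V_L$ is monotone enough in $L$ to pass to the limit (handled by the uniqueness part of Proposition \ref{pr4.1} and standard interior estimates), and that $\underline v_\epsilon\to\underline v$ as $\epsilon\to 0$ — the last step is the place where Proposition \ref{pr4.4} or, equivalently, the comparison principle Proposition \ref{pr4.2} combined with elliptic compactness, does the essential work. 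Apart from these limit passages, the argument is routine.
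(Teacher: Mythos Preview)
Your approach coincides with the paper's: upper bounds via scalar comparison and Theorem~\ref{the4.2}, then lower bounds by comparison on expanding balls with a perturbed coefficient, followed by the limits $L\to\infty$ and $\epsilon\to0$ (invoking Propositions~\ref{pr4.1}, \ref{pr4.2}, \ref{pr4.4}).

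There is, however, a technical error in the lower-bound step. You perturb by a \emph{constant}, writing $u\le\bar u(r)+\epsilon$, and then assert that $a_2-b_2(\bar u+\epsilon)$ still satisfies (\ref{4.2}). When $\rho<0$ this is false: since $b_2\ge\underline b_2>0$ and $r^{-\rho}\to\infty$, one has $b_2(r)\epsilon/r^\rho\to\infty$, whence
\[
\liminf_{r\to\infty}\frac{a_2(r)-b_2(r)(\bar u(r)+\epsilon)}{r^\rho}=-\infty.
\]
Proposition~\ref{pr4.1} then no longer applies, the entire-space limit equation need not have a unique positive solution $\underline v_\epsilon$, and Proposition~\ref{pr4.4} (which is formulated precisely for perturbations $\pm\,\varepsilon r^\rho$) cannot be invoked. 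The paper avoids this by using the perturbation $\bar u(r)+\varepsilon r^\rho$ throughout; since $b_2$ is bounded, this keeps the asymptotic structure (\ref{4.2}) intact for every intermediate coefficient. With this single replacement (and the symmetric one for the $u$ lower bound) your argument goes through essentially verbatim.
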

\begin{proof}
{\it Step 1.} Let $\tilde u(t,r)$ be the only positive solution for $u_t-d_1\Delta u=u(a_1(r)-b_1(r) u)$, $t>0,\ 0\leq r< \infty$ with the boundary condition $u_r(t,0)=0$ for $t>0$ and the initial data
\begin{eqnarray*}
u(0,r)=\left\{\begin{array}{ll}
u_0(r),&0\leq r\leq h_0,\\[1mm]
0,&r> h_0.
\end{array}
\right.
\end{eqnarray*}
By means of the comparison principle we obtain $u(t,r)\leq\tilde u(t,r)$ for all $t\geq0$ and $0\leq r\leq h(t)$. In light of Theorem \ref{the4.2}, $\lim_{t\to\infty}\tilde u(t,r)=\bar u(r)$ uniformly on any compact subset of $[0,\infty)$, where $\bar u(r)$ is the only positive solution of problem (\ref{4.16}). Due to $h(\infty)=\infty$, we easily get the second inequality of (\ref{4.21}).

Similar to the above, we can derive $\limsup_{t\to\infty}v(t,r)\leq\bar v(r)$ uniformly on any compact subset of $[0,\infty)$, where $\bar v(r)$ is the only positive solution for problem (\ref{4.18}).

{\it Step 2.} In this step we shall show the remaining two inequalities of this theorem.

For any given $0<\varepsilon\ll1$ and $\ell\gg1$, we can choose $T$ sufficiently large so that
$$h(t)>\ell,\ \ u(t,r)<\bar u(r)+\varepsilon r^{\rho},\ \forall\ t\geq T,\ 0\leq r<\ell.$$
In addition, for $\ell\gg1$, the elliptic problem
\begin{eqnarray}\label{4.23}
\left\{\begin{array}{ll}
-d_2\Delta v=v(a_2(r)-c_2(r)v-b_2(r)(\bar u(r)+\varepsilon r^{\rho}))&{\rm in \ }B_\ell,\\[1mm]
v(\ell)=0
\end{array}\right.
\end{eqnarray}
admits a unique positive solution, denoted by $v_\ell^0(r)$.
Since $v(T,r)>0$ for any $0\leq r\leq\ell$, there exists a positive  number $\delta<1$ so that $v(T,r)\geq\delta v_\ell^0(r)$ for any $0\leq r\leq\ell$. It is easy to verify that $\delta v_\ell^0(r)$ is a lower solution of problem (\ref{4.23}).

Let $v_\ell^0(t,r)$ be the unique solution of the following parabolic problem with fix boundary condition
\begin{eqnarray}\label{4.24}
\left\{\begin{array}{ll}
v_t-d_2\Delta v=v(a_2(r)-c_2(r)v-b_2(r)(\bar u(r)+\varepsilon r^{\rho})),&t>T,\ 0\leq r<\ell,\\[1mm]
v_r(t,0)=0=v(t,\ell),&t\geq T,\\[1mm]
v(T,r)=\delta v_\ell^0(r),&0\leq r\leq \ell.
\end{array}
\right.
\end{eqnarray}
The comparison principle asserts $v(t,r)\geq v_\ell^0(t,r)$ for any $t\geq T$ and $0\leq r\leq\ell$, and $v_\ell^0(t,r)$ is increasing with respect to $t$. Note that $C=\max\{K, \|a_2(r)\|_\infty/\underline{c}_2\}$ is an upper solution of problem (\ref{4.24}), where $K$ is established in Theorem \ref{t2.1}. Thus it can be deduced that $\lim_{t\to\infty}v_\ell^0(t,r)=v_\ell^0(r)$ uniformly on $[0,\ell]$. We further derive
\begin{equation}\label{4.25}
\liminf_{t\to\infty}v(t,r)\geq v_\ell^0(r)\ \ {\rm uniformly\  on}\ [0,\ell].
\end{equation}

Let $v_\varepsilon(r)$ be the unique positive solution of
\begin{eqnarray}\label{4.26}
-d_2\Delta v=v(a_2(r)-b_2(r)(\bar u(r)+\varepsilon r^{\rho})-c_2(r)v)\ \ {\rm in \ }\mathbb{R}^N.
\end{eqnarray}
By virtue of the comparison principle we obtain $v_\ell^0(r)\leq v_\varepsilon(r)$ on $[0,\ell]$ and $v_\ell^0(r)$ is increasing with respect to $\ell$. Making use of the regularity theory and compactness argument, it can be deduced that there exists a positive function $\tilde v(r)$ so that $v_\ell^0(r)\to\tilde  v(r)$ in $C^2_{\rm loc}([0,\infty))$ as $\ell\to\infty$, and $\tilde v(r)$ satisfies (\ref{4.26}). Owing to the uniqueness of solution we have $\tilde v(r)=v_\varepsilon(r)$, and so
\begin{equation}\label{4.27}
\lim_{\ell\to\infty}v_\ell^0(r)=v_\varepsilon(r)\ \ {\rm uniformly\ on\ any\ compact\ subset\ of}\  [0,\infty).
\end{equation}
In terms of Proposition \ref{pr4.4} we derive
\begin{equation}\label{d14}
\lim_{\varepsilon\to0}v_\varepsilon(r)=\underline v(r)\ \ {\rm uniformly\ on\ any\ compact\ subset\ of}\  [0,\infty),
\end{equation}
where $\underline v(r)$ is the only positive solution of problem (\ref{4.17}).

Now the first inequality of (\ref{4.22}) immediately follows from (\ref{4.25}), (\ref{4.27}) and (\ref{d14}). Similarly we can show
 $$\underline  u(r)\leq\liminf_{t\to\infty}u(t,r)\ \ {\rm uniformly\ on\ any\ compact\ subset\ of}\  [0,\infty),$$
where $\underline u(r)$ denotes the unique positive solution of problem (\ref{4.19}).
This completes the proof of Theorem \ref{the4.1}.
\end{proof}

\section{Corresponding results for problem (\ref{a1})}\label{sec5}
\setcounter{equation}{0}
In this section, we explain how the techniques developed for treating (\ref{1.1}) can be modified to derive similar results for (\ref{a1}).

We start with the counterpart of Theorem \ref{t2.1}.
\begin{theorem}\label{t5.1}
For any given $u_0$ and $v_0$ satisfying {\rm(\ref{a2})}, problem {\rm(\ref{a1})} has a unique global solution $(u,v,h)$, and
$$
(u,v,h)\in C^{1+\frac{\gamma}{2},2+\gamma}( Q)\times C^{1+\frac{\gamma}{2},2+\gamma}( D)\times C^{1+\frac{1+\gamma}{ 2}}(0,\infty),
$$
where $Q=\{(t,r)\in \mathbb{R}^2:t\in(0,\infty),\ r\in[0,h(t)]\}$ and $D=\{(t,r)\in \mathbb{R}^2:t\in(0,\infty),\ r\in[0,\infty)\}$. Moreover, there exist  positive constants $K$ and $K_1$ dependent on $d_i,\mu,\beta, \underline{b}_i, \bar b_i, \underline{c}_i, \bar{c}_i, h_0$ and $\|a_i,u_0,v_0\|_\infty$ such that
$$
0<u(t,r)\leq K, \ \forall\  t>0,\ 0\leq r<h(t); \ 0<v(t,r)\leq K,\ \forall\  t>0,\ 0\leq r<\infty;
$$
$$
\ 0<h'(t)\leq K,\ \forall\  t>0;\ \  \|u(t,\cdot)\|_{C^1[0,h(t)]}\leq K_1,\ \forall \ t\geq1;\ \ \|h'\|_{C^{\frac{\gamma}{2}}([n+1,n+3])}\leq K_1, \ \forall\ n\geq0.
$$
\end{theorem}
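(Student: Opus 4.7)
The overall plan is to mimic the four-step strategy used for Theorem~\ref{t2.1}, making only those modifications forced by the two new features of problem~(\ref{a1}): the prey $v$ lives on the entire half-line $[0,\infty)$ rather than on the moving ball $[0,h(t))$, and the free boundary condition $h'(t)=-\mu u_r(t,h(t))$ no longer involves $v$. Because $v$ does not vanish at $r=h(t)$, no transformation of the $v$-equation is needed, and we treat $v$ by standard parabolic theory on $[0,\infty)$ throughout.

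First, I would establish local existence and uniqueness on a short interval $(0,\tau]$ by a contraction mapping argument essentially as in \cite{GW,WZhao}. Apply the straightening $y=x/h(t)$ only to the $u$-equation on $[0,h(t))$, obtaining a fixed-domain initial-boundary problem on $[0,1]$ with coefficients continuous in $t$ and H\"older in $s$; meanwhile keep $v$ on $[0,\infty)$ and treat the $v$-equation as a semilinear parabolic equation on the half-line with Neumann condition at $r=0$ and the prescribed bounded initial data $v_0\in C^2\cap L^\infty$. Coupling the two via the Lipschitz dependence on the coupling terms $c_1 v$ and $b_2 u$ (with $u$ extended by zero for $r\ge h(t)$), the usual Banach fixed-point scheme on a small ball in $C^{(1+\nu)/2,1+\nu}$ delivers a unique local solution with $h\in C^{1+\nu/2}$.

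Next, I would bootstrap to the claimed H\"older-class regularity. For $u$ I would apply Theorem~10.1 of \cite{LSU} to the transformed equation on rectangles $[\varepsilon,\tau]\times[\varepsilon,1]$ and $[\varepsilon,\tau]\times[0,1-\varepsilon]$, obtaining $u\in C^{1+\gamma/2,2+\gamma}$ on $Q$ just as in Step~1 of Theorem~\ref{t2.1}. For $v$ I would localize on strips $[\varepsilon,\tau]\times[0,R]$ and $[\varepsilon,\tau]\times[R,\infty)$ and apply interior and boundary parabolic Schauder estimates; since the coefficients $a_2,b_2,c_2\in C^\gamma\cap L^\infty$, $u$ is already $C^{\gamma/2,\gamma}$ up to $r=h(t)$ and vanishes beyond, and $v_0\in C^2\cap L^\infty$, this yields $v\in C^{1+\gamma/2,2+\gamma}(D)$. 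Feeding $u_r\in C^{(1+\gamma)/2,1+\gamma}$ on $\{r=h(t)\}$ into the free boundary relation $h'(t)=-\mu u_r$ then gives $h'\in C^{(1+\gamma)/2}$.

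For the a priori bounds, the lower bound $u,v>0$ follows from the strong maximum principle. The upper bound on $u$ is obtained exactly as before by comparison with the ODE $U'=U(\|a_1\|_\infty-\underline{b}_1 U)$, so $u\le K:=\max\{\|a_1\|_\infty/\underline{b}_1,\|u_0\|_\infty\}$. For $v$ the key observation is that $u\ge 0$ implies $v_t-d_2\Delta v\le v(a_2-c_2 v)\le v(\|a_2\|_\infty-\underline{c}_2 v)$, and comparison with the corresponding ODE with initial data $\|v_0\|_\infty$ yields $v\le\max\{\|a_2\|_\infty/\underline{c}_2,\|v_0\|_\infty\}$; this is where working on all of $[0,\infty)$ causes no trouble since no boundary condition for $v$ at a moving point is needed. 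Then $h'(t)>0$ follows from Hopf's lemma (\cite[Lemma 2.6]{Lie}) applied to $u$ at $r=h(t)$, and $h'(t)\le K$ is produced by the standard auxiliary-function construction near the free boundary as in \cite{GW,DL2}: test $u$ against a barrier of the form $M\bigl(2\eta(h(t)-r)-\eta^2(h(t)-r)^2\bigr)$ in a thin strip $\{h(t)-\eta^{-1}<r<h(t)\}$, where $M$ depends only on the stated data. A standard continuation argument using the uniformity of these bounds in $\tau$ then yields a global solution, completing (\ref{2.1})-type regularity and the first block of estimates.

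Finally, for the long-time estimates (the analogue of~(\ref{2.3})), I would use the same window-shift trick: set $w^n(t,s)=u(t+n,h(t+n)s)$ on $[0,3]\times[0,1]$, extract uniformly bounded coefficients from the already-established global bounds on $h$, $h'$, and $u$, and apply interior $L^p$ estimates (\cite[Thms.~7.15, 7.20]{Lie}) together with Sobolev embedding to obtain $\|w\|_{C^{(1+\gamma)/2,1+\gamma}([1,3]\times[0,1])}\le K_1$ uniformly in $n$. Translating back through $y=x/h(t)$ yields the $C^1$ bound on $u$ for $t\ge 1$, and inserting into $h'=-\mu u_r$ gives the $C^{\gamma/2}$ estimate on $h'$ over $[n+1,n+3]$. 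I expect the main technical obstacle to be the boundary/interface estimates for $v$ near the free curve $r=h(t)$: although $v$ does not vanish there, the coupling coefficient $b_2 u$ has only H\"older regularity across $r=h(t)$ (being $0$ for $r\ge h(t)$ and smooth for $r<h(t)$), so care is needed to verify that $v$ attains the full $C^{1+\gamma/2,2+\gamma}$ regularity claimed on all of $D$ and not merely on each side of the curve; this is handled by noting that the transmission is through a lower-order zeroth-order term and appealing to the interior H\"older theory on the whole half-line.
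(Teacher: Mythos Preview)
Your proposal is correct and follows essentially the same route as the paper. The paper's own proof is extremely brief: it simply says the argument is the same as Theorem~\ref{t2.1} with two modifications --- (i) for local existence one should follow \cite[Theorem~2.1]{DL2} rather than \cite[Lemma~2.1]{GW} because $v$ now lives on the whole half-line, and (ii) the regularity $v\in C^{1+\gamma/2,\,2+\gamma}((0,\tau]\times[0,\infty))$ is obtained by applying local Schauder estimates on each strip $(0,\tau]\times[m,m+1]$, $m\ge 0$. Your plan matches both points (you straighten only the $u$-equation and localize $v$ on finite $r$-intervals), and your remaining steps for the a~priori bounds, continuation, and the window-shift argument are identical to those in Theorem~\ref{t2.1}; your additional remark about the H\"older regularity of the extended coupling term $b_2u$ across $r=h(t)$ is a legitimate technical point that the paper passes over silently but which is indeed harmless for the reason you give.
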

\begin{proof}
The proof is essentially the same as that of Theorem \ref{t2.1}. We merely point out two  modifications here. Firstly, similar to the argument for Theorem 2.1 in \cite{DL2} instead of Lemma 2.1 in \cite{GW}, it can be shown that for any $\nu\in(0,1)$, there is a $\tau>0$ such that problem (\ref{a1}) admits a unique local solution $(u,v,h)\in C^{\frac{1+\nu}{2},1+\nu}( Q_\tau)\times C^{\frac{1+\nu}{2},1+\nu}(D_\tau)\times C^{1+\frac{\nu}{2}}(0,\tau)$, where $Q_\tau=\{(t,r):t\in(0,\tau],\ r\in[0,h(t)]\}$ and $D_\tau=\{(t,r):t\in(0,\tau],\ r\in[0,\infty)\}$. Secondly, the regularity $v\in C^{1+\frac{\gamma}2,\,2+\gamma}((0,\tau]\times[0,\infty))$ comes from  the fact that $v\in C^{1+\frac{\gamma}2,\,2+\gamma}\big((0,\tau]\times[m,m+1]\big)$ for any $m\geq0$.
\end{proof}
In the remainder of this section, it is always assumed that there exists positive constants $\underline{a}_i$, $\bar{a}_i$ $(i=1,2)$ such that
\begin{equation}\label{e0}
\underline{a}_i=\liminf_{r\to\infty}\frac{a_i(r)}{r^\rho},\ \ \ \bar{a}_i=\limsup_{r\to\infty}\frac{a_i(r)}{r^\rho}.
\end{equation}

We next establish the spreading-vanishing dichotomy. To do this, we first exhibit the asymptotic behavior of $(u,v)$ for vanishing situation.
\begin{theorem}\label{the5.1}
Let $(u,v,h)$ be the solution of problem $(\ref{a1})$. If $h_\infty<\infty$, then
\begin{equation}\label{e1}\lim_{t\to\infty}\|u(t,\cdot)\|_{C([0,h(t)])}=0\end{equation}
and
\begin{equation}\label{e2}
\lim_{t\to\infty}v(t,r)=V(r)\ \ {\rm uniformly\ on\ any\ compact\ subset\ of}\  [0,\infty),
\end{equation}
where $V(r)$ is the only positive solution of the problem
\begin{eqnarray}\label{e3}
-d_2\Delta v=v(a_2(r)- c_2(r)v)\ \ {\rm in \ }\mathbb{R}^N
\end{eqnarray}
This result shows that if a new competitor can not penetrate deep into the habitat of  a well established native species, it will dies out eventually.
\end{theorem}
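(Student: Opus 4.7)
The plan is to prove the two assertions separately, exploiting that $h_\infty<\infty$ forces both $u\to 0$ and the compact support of $u$. For (\ref{e1}) I will mirror the proof of Theorem \ref{the3.1}: from Theorem \ref{t5.1} we have $h'(t)>0$ and $\|h'\|_{C^{\gamma/2}([n+1,n+3])}\le K_1$, which together with $h_\infty<\infty$ forces $h'(t)\to 0$; hypothesis (\textbf{H}) and the $L^\infty$ bounds yield $|a_1-b_1 u-c_1 v|\le M_1$ and hence $u_t-d_1\Delta u\ge -M_1 u$; together with the free boundary condition $h'(t)=-\mu u_r$ at $r=h(t)$, the hypotheses of Proposition \ref{pr3.1} are met and (\ref{e1}) follows.

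For (\ref{e2}) I sandwich $v$ between an upper and a lower barrier. The upper barrier is immediate: since $-b_2 u v\le 0$, $v$ is a subsolution of
\[
\bar v_t-d_2\Delta\bar v=\bar v\bigl(a_2(r)-c_2(r)\bar v\bigr),\quad \bar v_r(t,0)=0,\quad \bar v(0,r)=v_0(r),
\]
and Theorem \ref{the4.2} --- applicable by (\ref{e0}) --- gives $\bar v(t,r)\to V(r)$ uniformly on compacta, so $\limsup v(t,r)\le V(r)$.

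For the lower barrier I exploit that $u$ is compactly supported in $[0,h_\infty]$ and vanishes uniformly there. Given small $\varepsilon>0$, choose $T$ so large that $\bar b_2\|u(t,\cdot)\|_\infty\le\varepsilon h_\infty^{\rho}$ for all $t\ge T$. Because $\rho\in(-2,0]$ yields $r^\rho\ge h_\infty^\rho$ on $(0,h_\infty]$, and $u$ vanishes outside $[0,h_\infty]$, this gives $b_2(r)u(t,r)\le\varepsilon r^\rho$ for every $r>0$ and $t\ge T$, hence
\[
v_t-d_2\Delta v\ge v\bigl(a_2(r)-\varepsilon r^\rho-c_2(r)v\bigr),\qquad t\ge T,\ r\ge 0.
\]
Letting $\underline v_\varepsilon$ solve the corresponding equality with Neumann condition at $r=0$ and initial datum $v(T,\cdot)$, comparison yields $v\ge\underline v_\varepsilon$. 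For $\varepsilon$ small, $q(r):=a_2(r)-\varepsilon r^\rho$ still satisfies (\ref{4.2}) (with $\underline a_2-\varepsilon$ and $\bar a_2-\varepsilon$), so Theorem \ref{the4.2} yields $\underline v_\varepsilon(t,r)\to V_\varepsilon^-(r)$ on compacta, where $V_\varepsilon^-$ is the unique positive solution of $-d_2\Delta V=V(a_2-\varepsilon r^\rho-c_2 V)$ on $\mathbb{R}^N$. Proposition \ref{pr4.4} finally delivers $V_\varepsilon^-\to V$ as $\varepsilon\to 0$ uniformly on compacta, so $\liminf v(t,r)\ge V(r)$ and (\ref{e2}) follows.

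The main obstacle is constructing the lower barrier in a way compatible with the growth condition (\ref{4.2}): the naive replacement $a_2\mapsto a_2-\varepsilon$ fails whenever $\rho<0$, since then $(a_2-\varepsilon)/r^\rho\to-\infty$. The observation that $h_\infty<\infty$ forces $u$ to have compact support --- allowing one to trade uniform smallness of $u$ against the pointwise bound $r^\rho\ge h_\infty^\rho$ on $(0,h_\infty]$ --- is exactly what converts the constant perturbation $\varepsilon$ into the scale-compatible $\varepsilon r^\rho$ required by Theorem \ref{the4.2} and Proposition \ref{pr4.4}.
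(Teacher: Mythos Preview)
Your proof is correct and follows essentially the same route as the paper: (\ref{e1}) via Proposition \ref{pr3.1} exactly as in Theorem \ref{the3.1}, the upper bound for $v$ by comparison with the pure logistic problem, and the lower bound by exploiting $u(t,r)<\varepsilon r^\rho$ for large $t$ (the paper asserts this inequality without your justification via $r^\rho\ge h_\infty^\rho$ on the support of $u$, and then refers to Step~2 of Theorem \ref{the4.1}). Your only deviation is cosmetic --- you invoke Theorem \ref{the4.2} directly for both barriers, whereas the paper uses a monotone-in-$t$ supersolution started from a large constant for the upper bound and the ball-approximation of Theorem \ref{the4.1} together with Proposition \ref{pr4.4} for the lower bound, but this is the same machinery repackaged.
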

\begin{proof}
The limit (\ref{e1}) can be obtained analogously to the proof of Theorem \ref{the3.1}.

Set $C=\|v_0\|_\infty+\|a_2(r)\|_\infty/\underline{c}_2$ and let $v_C(t,r)$ be the only positive solution of
\begin{eqnarray*}
\left\{\begin{array}{ll}
v_t-d_2\Delta v=v(a_2(r)- c_2(r)v),&t>0,\ 0<r<\infty,\\[1mm]
v_r(t,0)=0,&t>0,\\[1mm]
v(0,r)=C,&0\leq r<\infty.
\end{array}
\right.
\end{eqnarray*}
Then $v(t,r)\leq v_C(t,r)$ for all $t\geq0$ and $r\geq0$ and $v_C(t,r)$ is decreasing with respect to $t$. Because $V(r)$ is the only positive solution of (\ref{e3}), by the standard method we can show that $\lim_{t\to\infty}v_C(t,r)=V(r)$ uniformly on any compact subset of $[0,\infty)$. Thus
\begin{eqnarray*}
\limsup_{t\to\infty}v(t,r)\leq V(r)\ \ {\rm uniformly\ on\ any\ compact\ subset\ of}\  [0,\infty).
\end{eqnarray*}
Since $u(t,r)\equiv0$ for any $t\geq0$ and $r\geq h(t)$, we can select sufficiently large $T>0$ such that for any $0<\varepsilon\ll1$,
$$u(t,r)<\varepsilon r^\rho, \ \ {\rm \forall \ t\geq T},\ r\geq0.$$
Similar to the discussion of {\it Step 2} in Theorem \ref{the4.1}, it is not difficult to derive that
\begin{eqnarray*}
\liminf_{t\to\infty}v(t,r)\geq V(r)\ \ {\rm uniformly\ on\ any\ compact\ subset\ of}\  [0,\infty).
\end{eqnarray*}
Therefore, (\ref{e2}) is verified. This completes the proof.
\end{proof}

Assume in the sequel that
\vspace{-1mm}\begin{quote}
 {\bf(A3)}\,there exists $0<R^*<\infty$ so that $\lambda_1(R^*;a_1(r)-c_1(r)V(r),d_1)=0$.
 \vspace{-1mm}\end{quote}
Let us point out that this assumption can hold when $\underline{a}_1\underline{c}_2-\bar{a}_2\bar{c}_1>0$.
Actually, it follows from Proposition \ref{pr4.1} that $V(r)$ satisfies
  $${\underline{a}_2}/{\bar{c}_2}\leq\liminf_{r\to\infty}\frac{V(r)}{r^\rho}\leq \limsup_{r\to\infty}\frac{V(r)}{r^\rho}\leq {\bar{a}_2}/{\underline{c}_2}. $$
Therefore, if $\underline{a}_1\underline{c}_2-\bar{a}_2\bar{c}_1>0$, then $\liminf_{r\to\infty}\frac{a_1(r)-c_1(r)V(r)}{r^\rho}>0$, which implies that the function $a_1(r)-c_1(r)V(r)$ fulfills condition {\bf (A2)}. In view of Proposition \ref{p3.2}, the assumption {\bf(A3)} can be achieved.
\begin{lemma}\label{lem5.1}
Let $(u,v,h)$ be the solution of problem $(\ref{a1})$. If $h_\infty<\infty$, then
$h_\infty\leq R^*$.
\end{lemma}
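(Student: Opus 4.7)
The plan is to argue by contradiction: assume $h_\infty>R^*$ and use the asymptotic profile provided by Theorem \ref{the5.1} to construct a strict positive lower bound for $u$ on a fixed ball, contradicting the first assertion of Theorem \ref{the5.1}.

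First I would record what Theorem \ref{the5.1} gives us under the hypothesis $h_\infty<\infty$: namely $u(t,\cdot)\to 0$ uniformly on $[0,h(t)]$ and $v(t,r)\to V(r)$ uniformly on compact subsets of $[0,\infty)$, where $V$ solves \eqref{e3}. Under the contradiction hypothesis $h_\infty>R^*$, pick $\ell_0\in(R^*,h_\infty)$. By Proposition \ref{pr3.2}(ii), the strict monotonicity of $\lambda_1(\ell;\,a_1-c_1V,\,d_1)$ in $\ell$ together with the defining identity $\lambda_1(R^*;\,a_1-c_1V,\,d_1)=0$ from assumption (A3) yields $\lambda_1(\ell_0;\,a_1-c_1V,\,d_1)<0$. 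By the continuity of $\lambda_1$ in the potential (Proposition \ref{pr3.2}(i)) we can fix $\varepsilon>0$ so small that
\[
\lambda_1\bigl(\ell_0;\,a_1-c_1(V+\varepsilon),\,d_1\bigr)<0.
\]

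Next I would choose $T$ so large that simultaneously $h(T)\ge\ell_0$ and $v(t,r)\le V(r)+\varepsilon$ for all $t\ge T$ and $0\le r\le \ell_0$; the first is possible because $h_\infty>\ell_0$ and $h$ is increasing (Theorem \ref{t5.1}), the second because $v\to V$ uniformly on $[0,\ell_0]$. Then on the fixed domain $[0,\ell_0]\subset[0,h(t)]$ the function $u$ satisfies
\[
u_t-d_1\Delta u \;\ge\; u\bigl(a_1(r)-c_1(r)(V(r)+\varepsilon)-b_1(r)u\bigr),\qquad t\ge T,\ 0\le r\le\ell_0,
\]
together with $u_r(t,0)=0$ and $u(t,\ell_0)\ge 0$. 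Let $w(t,r)$ be the solution of the fixed-boundary logistic problem
\[
\left\{\begin{array}{ll}
w_t-d_1\Delta w = w\bigl(a_1(r)-c_1(r)(V(r)+\varepsilon)-b_1(r)w\bigr), & t>T,\ 0\le r<\ell_0,\\[1mm]
w_r(t,0)=0=w(t,\ell_0), & t\ge T,\\[1mm]
w(T,r)=u(T,r), & 0\le r\le\ell_0.
\end{array}\right.
\]
Since $u(T,r)>0$ in $[0,\ell_0)$, the comparison principle gives $u(t,r)\ge w(t,r)$ for $t\ge T$, $0\le r\le\ell_0$. Because $\lambda_1(\ell_0;\,a_1-c_1(V+\varepsilon),\,d_1)<0$, the classical theory for logistic equations on a bounded ball yields $w(t,r)\to W(r)$ uniformly on $[0,\ell_0]$, where $W$ is the unique positive steady state. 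Hence $\liminf_{t\to\infty}u(t,r)\ge W(r)>0$ on $(0,\ell_0)$, which contradicts $\lim_{t\to\infty}\|u(t,\cdot)\|_{C([0,h(t)])}=0$ from Theorem \ref{the5.1}.

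The main obstacle is the combined use of continuity and monotonicity of $\lambda_1$ to propagate the strict inequality from $\lambda_1(R^*;a_1-c_1V,d_1)=0$ through a small perturbation $V\mapsto V+\varepsilon$ while simultaneously ensuring $\ell_0<h_\infty$ and $v\le V+\varepsilon$ on $[0,\ell_0]$; once this is in place, the rest is a standard comparison with an auxiliary fixed-boundary logistic problem exactly as in the proof of Lemma \ref{lem3.1}.
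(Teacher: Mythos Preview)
Your argument is correct and follows essentially the same route as the paper's proof: assume $h_\infty>R^*$, invoke Theorem~\ref{the5.1}, use the monotonicity and continuity of $\lambda_1$ from Proposition~\ref{pr3.2} to obtain a negative principal eigenvalue on a fixed ball with the perturbed potential, and then compare $u$ with the solution of a fixed-boundary logistic problem converging to a positive steady state, contradicting $u\to 0$. The only cosmetic differences are that the paper works on $[0,h(T)]$ rather than a pre-chosen $\ell_0\in(R^*,h_\infty)$ and perturbs by $\varepsilon/c_1(r)$ instead of $\varepsilon$, but these are immaterial.
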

\begin{proof}We easily know from Theorem \ref{the5.1} that
$$\lim_{t\to\infty}\|u(t,\cdot)\|_{C([0,h(t)])}=0$$
and
$$
\lim_{t\to\infty}v(t,r)=V(r)\ \ {\rm uniformly\ on\ any\ compact\ subset\ of}\  [0,\infty),
$$

Assume on the contrary that $R^* <h_\infty< \infty$. By means of Proposition \ref{pr3.2}(ii) we know that $\lambda_1(h_\infty;a_1-c_1V,d_1)<0$. Similar to the proof of Lemma \ref{lem3.1}, there are $0<\varepsilon\ll1$ and $T\gg1$ such that
$$v(t,r)\leq V(r)+\frac{\varepsilon}{c_1(r)},\ \ \forall\ t\geq T, \ 0\leq r\leq h(t),$$
and
$\lambda_1(h(T);a_1-c_1V-\varepsilon,d_1)<0$. Let $w(t,r)$ and $W(r)$ denote the unique positive solution of
\begin{eqnarray*}
\left\{\begin{array}{ll}
w_t-d_1\Delta w=w(a_1(r)-b_1(r)w-c_1(r)V-\varepsilon),&t>T, \ 0\leq r<h(T),\\[1mm]
w_r(t,0)=0=w(t,h(T)),&t\geq T,\\[1mm]
w(T,r)=u(T,r),&0\leq r\leq h(T)
\end{array}
\right.
\end{eqnarray*}
and the corresponding stationary problem
\begin{eqnarray*}
\left\{\begin{array}{ll}
-d_1\Delta W=W(a_1(r)-c_1(r)V-\varepsilon-b_1(r)W),&0\leq r<h(T),\\[1mm]
W_r(0)=0=W(h(T)),
\end{array}
\right.
\end{eqnarray*}
respectively. Then, we deduce that  $\liminf_{t\to\infty}u(t,r)\geq\lim_{t\to\infty}w(t,r)=W(r)>0$ in $(0,h(T))$, which is in contradiction to the fact that $\lim_{t\to\infty}\|u(t,\cdot)\|_{C([0,h(t)])}=0$. This finishes the proof.
\end{proof}
The next lemma can be obtained in a similar manner to Lemma \ref{lem3.3}(ii).
\begin{lemma}\label{lem5.2}
If $h_0<R^*$, then there exists a positive number $\bar\mu$ so that $h_\infty=\infty$ if $\mu>\bar\mu$.
\end{lemma}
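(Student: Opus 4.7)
My plan is to mimic the second half of the proof of Lemma \ref{lem3.3}(ii), with $h^*$ replaced by $R^*$ and Lemma \ref{lem3.3}(i) replaced by Lemma \ref{lem5.1}. The first ingredient is a $\mu$-independent bound on the reaction: using only hypothesis $({\bf H})$ together with the a priori estimates
$$u(t,r)\leq\max\{\|a_1\|_\infty/\underline{b}_1,\|u_0\|_\infty\},\qquad v(t,r)\leq\max\{\|a_2\|_\infty/\underline{c}_2,\|v_0\|_\infty\}$$
that are derived exactly as in Step 2 of the proof of Theorem \ref{t2.1} (and hence do not involve $\mu$), one may pick a constant $C>0$, depending only on the data and on $(u_0,v_0)$, such that $|a_1(r)-b_1(r)u(t,r)-c_1(r)v(t,r)|\leq C$ pointwise.

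Next, I would introduce the scalar auxiliary free boundary problem
\begin{eqnarray*}
\left\{\begin{array}{ll}
\underline u_t-d_1\Delta\underline u= -C\underline u, & t>0,\ 0\leq r<\underline h(t),\\
\underline u_r(t,0)=0=\underline u(t,\underline h(t)), & t>0,\\
\underline h'(t)=-\mu\,\underline u_r(t,\underline h(t)), & t>0,\\
\underline u(0,r)=u_0(r),\ \underline h(0)=h_0.
\end{array}\right.
\end{eqnarray*}
Since $u$ from (\ref{a1}) satisfies $u_t-d_1\Delta u\geq -Cu$ with matching boundary conditions on $r=0$ and on the free boundary, the standard scalar free-boundary comparison principle (as used, e.g., in Lemma 3.5 of \cite{DL}) gives $h(t)\geq \underline h(t)$ for all $t\geq 0$. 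Now fix any $R>R^*$ and apply Proposition \ref{pr3.4} to $(\underline u,\underline h)$: this yields a threshold $\bar\mu>0$ such that $\underline h(\infty)>R>R^*$ for every $\mu>\bar\mu$. Consequently $h_\infty\geq \underline h(\infty)>R^*$, and Lemma \ref{lem5.1}, in its contrapositive form ($h_\infty>R^*$ rules out $h_\infty<\infty$), forces $h_\infty=\infty$.

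The only genuinely delicate point is securing the constant $C$ independently of $\mu$: if one naively substituted the constant $K$ from Theorem \ref{t5.1}, which depends on $\mu$, then the threshold $\bar\mu$ produced by Proposition \ref{pr3.4} would in turn depend on $\mu$ and the scheme would be circular. The remedy above, via the $\mu$-free $L^\infty$ bounds, avoids this. A secondary minor point is that Proposition \ref{pr3.4} is stated with a positive constant $C$ on the right-hand side rather than $-C$, but its proof extends unchanged to the exponentially decaying equation $\underline u_t-d_1\Delta\underline u=-C\underline u$: the mechanism driving $\underline h(\infty)$ past any prescribed $R$ for large $\mu$ is the free-boundary expansion rate over a finite time interval, which dominates a bounded exponential decay in the unknown.
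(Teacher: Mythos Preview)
Your proof is correct and follows exactly the route the paper indicates (``in a similar manner to Lemma~\ref{lem3.3}(ii)''): the same auxiliary scalar free-boundary problem $\underline u_t-d_1\Delta\underline u=-C\underline u$, the same appeal to Proposition~\ref{pr3.4}, and then Lemma~\ref{lem5.1} in place of Lemma~\ref{lem3.3}(i). Your explicit attention to the $\mu$-independence of $C$ (via the $L^\infty$ bounds from Step~2 of Theorem~\ref{t2.1}) and to the sign convention in Proposition~\ref{pr3.4} are points the paper glosses over but handles implicitly in the same way.
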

The hypothesis (\ref{e0}) ensures that there exists $0<\tilde R<\infty$ such that $\lambda_1(\tilde R;a_1(r), d_1)=0$. By virtue of Proposition \ref{pr3.2}(ii) it follows that $\tilde R<R^*$.
\begin{lemma}\label{lem5.3}
If $h_0<\tilde R$, then there exists a positive number $\underline \mu$ so that $h_\infty<\infty$ when $\mu\leq\underline\mu$.
\end{lemma}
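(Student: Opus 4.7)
The strategy is to adapt the supersolution construction of Lemma \ref{lem3.2} to the single-species free boundary in problem (\ref{a1}). A crucial simplification is that only $u$ carries a moving boundary in (\ref{a1}), while $v\ge 0$ enters the $u$-equation only through the nonpositive absorption term $-c_1(r)v\cdot u$; thus any supersolution of the purely logistic free-boundary problem for $u$ will automatically dominate the true $u$, regardless of $v$. In particular, we do not need a second smallness hypothesis involving $a_2$, which is consistent with the lemma's statement.

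Since $\lambda_1(\tilde R;a_1,d_1)=0$ and $h_0<\tilde R$, the strict $\ell$-monotonicity in Proposition \ref{pr3.2}(ii) gives $\lambda_1^1:=\lambda_1(h_0;a_1,d_1)>0$. Let $\psi_1(r)$ be the normalized positive radial eigenfunction. For small $\delta,\sigma>0$ and a large $M>0$ (all to be selected later), I would set
\begin{eqnarray*}
\bar h(t)=h_0(1+2\delta-\delta e^{-\sigma t}),\ \ s=\frac{h_0r}{\bar h(t)},\ \ \bar u(t,r)=Me^{-\sigma t}\psi_1(s),
\end{eqnarray*}
exactly as in Lemma \ref{lem3.2}. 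The same chain of estimates leading to (\ref{3.4}) yields $\bar u_t-d_1\Delta\bar u\ge\bar u(a_1(r)-b_1(r)\bar u)$ on $0\le r<\bar h(t)$, which, because $-c_1(r)v\cdot\bar u\le 0$, upgrades to the full inequality $\bar u_t-d_1\Delta\bar u\ge\bar u(a_1(r)-b_1(r)\bar u-c_1(r)v)$. The radial condition $\bar u_r(t,0)=0$, the vanishing $\bar u(t,\bar h(t))=0$, and the initial dominance $u_0\le\bar u(0,\cdot)$ (by enlarging $M$) are then arranged verbatim from the proof of Lemma \ref{lem3.2}.

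It remains to match the free-boundary condition and to compare. One has $\bar h'(t)=h_0\delta\sigma e^{-\sigma t}>0$, while $-\mu\bar u_r(t,\bar h(t))=-\mu Me^{-\sigma t}(h_0/\bar h(t))\psi_1'(h_0)$ is of order $\mu$ (with $\psi_1'(h_0)<0$ by the Hopf boundary lemma), so a choice such as $\underline\mu=h_0\delta\sigma(1+2\delta)/(M|\psi_1'(h_0)|)$ secures $\bar h'(t)\ge-\mu\bar u_r(t,\bar h(t))$ for all $0<\mu\le\underline\mu$. A single-species version of the comparison principle of Proposition \ref{pr3.3}, applied to the $u$-component alone, then forces $h(t)\le\bar h(t)\le h_0(1+2\delta)$ for every $t\ge0$, whence $h_\infty<\infty$. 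The only step that is not already verbatim from Lemma \ref{lem3.2} is formulating this one-species comparison principle, but since $v$ enters the $u$-equation only via the nonnegative absorption $-c_1(r)v$, the argument is in fact strictly easier than Proposition \ref{pr3.3}; this is the main technical point to record carefully in the write-up.
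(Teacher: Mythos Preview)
Your proposal is correct and follows essentially the same approach as the paper: both exploit that $-c_1(r)v\,u\le 0$ so that $(u,h)$ lies below the single-species logistic free boundary problem, and then invoke the supersolution construction of Lemma~\ref{lem3.2} (which is available since $h_0<\tilde R$ forces $\lambda_1(h_0;a_1,d_1)>0$). The only cosmetic difference is that the paper inserts the intermediate logistic problem $(\bar u,\bar h)$ and compares $h(t)\le\bar h(t)$ via the single-equation free-boundary comparison principle, whereas you compare directly against the explicit barrier $Me^{-\sigma t}\psi_1(h_0 r/\bar h(t))$; the content is the same.
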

\begin{proof}
Obviously, $(u,h)$ satisfies
\begin{eqnarray*}
\left\{
\begin{array}{ll}
u_t-d_1\Delta u\leq u(a_1(r)-b_1(r)u),&t>0,\ 0\leq r< h(t),\\[1mm]
u_r(t,0)=0,\ u(t,h(t))=0,&t>0,\\[1mm]
h'(t)=-\mu u_r(t,h(t)),&t>0,\\[1mm]
u(0,r)=u_0(r),&0\leq r\leq h_0,
\end{array}
\right.
\end{eqnarray*}
which implies that $(u,h)$ is a lower solution to the problem
\begin{eqnarray*}
\left\{
\begin{array}{ll}
\bar u_t-d_1\Delta \bar u= \bar u(a_1(r)-b_1(r)\bar u),&t>0,\ 0\leq r< \bar h(t),\\[1mm]
\bar u_r(t,0)=0,\ \bar u(t,\bar h(t))=0,&t>0,\\[1mm]
\bar h'(t)=-\mu \bar u_r(t,\bar h(t)),&t>0,\\[1mm]
\bar u(0,r)=u_0(r),\ \bar h_0=h_0,&0\leq r\leq \bar h_0.
\end{array}
\right.
\end{eqnarray*}
Note $h_0<\tilde R$, from the proof of Lemma \ref{lem3.2} (also see Theorem 3.4 in \cite{Wjde14}) it is easy to deduce that there exists $\underline \mu>0$ such that $\bar h_\infty<\infty$ when $\mu\leq\underline\mu$. Making use of the comparison principle for single equation with a free boundary gives $h_\infty<\infty$ when $\mu\leq\underline\mu$.
\end{proof}

To find the sharp criteria governing the alternatives in the spreading-vanishing dichotomy, we require the following comparison principle, which can be argued as in Lemma 2.6 of \cite{DL2}.
\begin{lemma}\label{lem5.4}
Assume that $T\in(0,\infty)$, $\bar h,\underline h\in C^1([0,T])$, $\bar u\in C(\overline{Q_T^*})\cap C^{1,2}(Q_T^*)$ with $Q_T^*=\{(t,r)\in\mathbb{R}^2:t\in(0,T], r\in(0,\bar h(t))\}$,  $\underline u\in C(\overline{Q_T^{**}})\cap C^{1,2}(Q_T^{**})$ with $Q_T^{**}=\{(t,r)\in\mathbb{R}^2:t\in(0,T], r\in(0,\underline h(t))\}$, $\bar v,\underline v\in(L^\infty\cap C)([0,T]\times[0,\infty))\cap C^{1,2}((0,T]\times[0,\infty))$ and
\begin{eqnarray*}
\left\{
\begin{array}{ll}
\bar u_t-d_1\Delta \bar u\geq \bar u(a_1(r)-b_1(r)\bar u-c_1(r)\underline v),&0<t\leq T,\  0\leq r<\bar h(t),\\[1mm]
\underline u_t-d_1\Delta \underline u\leq \underline u(a_1(r)-b_1(r)\underline u-c_1(r)\bar v),&0<t\leq T,\ 0\leq r<\underline h(t),\\[1mm]
\bar v_t-d_2\Delta \bar v\geq \bar v(a_2(r)-b_2(r)\underline u-c_2(r)\bar v),&0<t\leq T,\ 0\leq r<\infty,\\[1mm]
\underline v_t-d_2\Delta \underline v\leq \underline v(a_2(r)-b_2(r)\bar u-c_2(r)\underline v),&0<t\leq T,\ 0\leq r<\infty,\\[1mm]
\bar u_r(t,0)=\underline v_r(t,0)=0,\ \bar u(t,r)=0,&0<t\leq T,\ \bar h(t)\leq r <\infty,\\[1mm]
\underline u_r(t,0)=\bar v_r(t,0)=0,\ \underline u(t,r)=0,&0<t\leq T,\ \underline h(t)\leq r <\infty,\\[1mm]
\bar h'(t)\geq-\mu\bar u_r(t,\bar h(t)),\ \underline h'(t)\leq-\mu\underline u_r(t,\underline h(t)), &0<t\leq T,\\[1mm]
\bar h(0)\geq h_0\geq\underline h(0),\\[1mm]
\bar u(0,r)\geq u_0(r)\geq\underline u(0,r),&0\leq r\leq h_0,\\[1mm]
\bar v(0,r)\geq v_0(r)\geq\underline v(0,r),&0\leq r<\infty.
\end{array}
\right.
\end{eqnarray*}
Let $(u,v,h)$ be the unique solution of $(\ref{a1})$. Then
\begin{eqnarray*}
h(t)\leq\bar h(t)\ {\rm for}\  0<t\leq T,\ u(t,r)\leq\bar u(t,r),\ v(t,r)\geq\underline v(t,r)\ {\rm for }\ 0<t\leq T,\ 0\leq r<\infty,\\[1mm]
h(t)\geq\underline h(t)\ {\rm for }\  0<t\leq T,\ u(t,r)\geq\underline u(t,r),\ v(t,r)\leq\bar v(t,r)\ {\rm for }\  0<t\leq T,\ 0\leq r<\infty.
\end{eqnarray*}
\end{lemma}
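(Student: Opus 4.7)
The plan is to establish the comparison by a contradiction argument combined with a small perturbation, exploiting the fact that the underlying competition system becomes \emph{cooperative} once one pairs an upper solution for $u$ with a lower solution for $v$ (and vice versa). By symmetry of the statement, it suffices to prove the first triple of inequalities $h \le \bar h$, $u \le \bar u$, $v \ge \underline v$; the second triple follows by interchanging the roles of the barred and underlined functions.

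First I would extend $\bar u$ by zero to all of $r \ge \bar h(t)$ and introduce the differences $W := \bar u - u$ and $Z := v - \underline v$. A direct algebraic manipulation of the assumed inequalities (using $u^2 - \bar u^2 = -(u+\bar u)W$ and $uv - \bar u\underline v = u Z - \underline v W$) gives, on the natural domains,
\begin{eqnarray*}
W_t - d_1\Delta W &\ge& \alpha_1(t,r)\, W + c_1(r)\, u(t,r)\, Z,\\
Z_t - d_2\Delta Z &\ge& \alpha_2(t,r)\, Z + b_2(r)\, \underline v(t,r)\, W,
\end{eqnarray*}
with bounded coefficients $\alpha_i$ and nonnegative coupling terms, i.e.\ a weakly coupled cooperative system. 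To rule out pathological tangency along the moving front, I would then perturb the upper solution: for small $\ep>0$, set $\bar h^\ep(t) = \bar h(t)+\ep(1+t)$, $\bar u^\ep = (1+\ep)\bar u$ rescaled appropriately to its new support, and $\underline v^\ep = (1-\ep)\underline v$. One checks, using boundedness of all relevant coefficients, that these satisfy the corresponding differential inequalities with strict sign, and that the free-boundary relation becomes strict: $(\bar h^\ep)'(t) > -\mu\, \bar u^\ep_r(t,\bar h^\ep(t))$.

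Next, define $t^*$ to be the supremum of times $t \in [0,T]$ for which the strict inequalities $h<\bar h^\ep$, $u<\bar u^\ep$, and $v>\underline v^\ep$ hold on $[0,t]$. Suppose for contradiction that $t^*<T$. There are three exclusive possibilities at $t = t^*$. (i) If $h(t^*) = \bar h^\ep(t^*)$, then $W^\ep := \bar u^\ep - u$ is nonnegative inside, vanishes at $r = h(t^*)$, and is positive just inside; Hopf's lemma yields $u_r(t^*,h(t^*)) < \bar u^\ep_r(t^*, \bar h^\ep(t^*))$, and hence $h'(t^*) > (\bar h^\ep)'(t^*)$ via the two free-boundary relations — contradicting the definition of $t^*$. (ii) If $W^\ep$ first touches zero at an interior point of $[0,h(t^*))$, the strong maximum principle applied to the cooperative system forces $W^\ep\equiv 0$ on a parabolic neighborhood, in contradiction with $W^\ep(0,\cdot)>0$. (iii) The case where $Z^\ep$ first touches zero is treated identically on the half-line using the fact that $u$ is truncated to $0$ outside $\{r<h(t)\}$. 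Thus $t^* = T$, and passing $\ep \to 0$ gives the desired weak inequalities.

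The main obstacle is the construction and verification of the perturbed upper barrier $(\bar u^\ep, \underline v^\ep, \bar h^\ep)$: one must simultaneously preserve all four strict PDE inequalities, the strict free-boundary inequality, and the initial ordering, while keeping the perturbation small enough that convergence as $\ep\to 0$ recovers the original claim. The other delicate point is applying the strong maximum principle for $Z^\ep$ on the unbounded domain $[0,\infty)$ under the mild $L^\infty$ bounds guaranteed by Theorem \ref{t5.1}; this is standard but needs the usual Phragmén–Lindelöf-type justification. Once these technical points are in place, everything else is routine bookkeeping, so a reader comfortable with Lemma 2.6 of \cite{DL2} and Lemma 3.5 of \cite{DL} can reconstruct the full argument without difficulty.
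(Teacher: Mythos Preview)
Your argument is correct and is precisely the standard perturbation-plus-first-touching-time argument that underlies Lemma~2.6 of \cite{DL2} (and Lemma~3.5 of \cite{DL}); the paper gives no independent proof but simply refers to that lemma, so your sketch is in fact more detailed than what appears there. The only point worth tightening is the construction of the perturbed barrier: rather than ``rescaling $\bar u$ to its new support'', the cleanest route---and the one actually used in \cite{DL2}---is to perturb the \emph{solution} side by replacing $\mu$ with $\mu-\ep$ and slightly shrinking $(u_0,h_0)$, then invoke continuous dependence to pass to the limit; this avoids having to re-verify the four differential inequalities for the barrier.
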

We can now use Lemmas \ref{lem5.1}-\ref{lem5.4} to show the following sharp criteria for spreading and vanishing.
\begin{theorem}\label{the5.3}
{\rm (i)} If $h_0\geq R^*$, then $h_\infty=\infty$ for all $\mu>0$.

{\rm (ii)} If $h_0< R^*$, then there exists  $\hat \mu>0$ such that $h_\infty=\infty$ for any $\mu>\hat\mu$, whereas $h_\infty\leq R^*$ for any $\mu\leq\hat\mu$.
\end{theorem}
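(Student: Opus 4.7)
The plan is to mirror the proof of Theorem \ref{the3.2}, with $R^*$ (from hypothesis (A3)) taking the role of the threshold radius $h^*$, and to exploit monotone dependence of $h_\infty$ on $\mu$. Part (i) is immediate: if $h_0 \geq R^*$, Theorem \ref{t5.1} gives $h'(t) > 0$ and hence $h_\infty > h_0 \geq R^*$; a finite $h_\infty$ would contradict Lemma \ref{lem5.1}, so $h_\infty = \infty$ for every $\mu > 0$.

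For part (ii), I would proceed in three stages. \emph{Stage 1 (monotonicity).} Given $0 < \mu_1 < \mu_2$ with corresponding solutions $(u_i,v_i,h_i)$ of (\ref{a1}), apply Lemma \ref{lem5.4} to the $\mu_1$-problem with upper solution $(\bar u,\underline v,\bar h) := (u_2,v_2,h_2)$ and lower solution $(\underline u,\bar v,\underline h) := (u_1,v_1,h_1)$. The interior differential (in)equalities reduce to equalities, and the key free-boundary check
$$\bar h'(t) = -\mu_2 (u_2)_r(t,h_2(t)) \ \geq\ -\mu_1(u_2)_r(t,h_2(t))$$
holds because $(u_2)_r(t,h_2(t)) < 0$. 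Lemma \ref{lem5.4} then yields $h_1(t)\leq h_2(t)$ for all $t\geq 0$, so $\mu\mapsto h_\infty(\mu)$ is non-decreasing. \emph{Stage 2 (thresholding).} Combining Lemmas \ref{lem5.3} and \ref{lem5.1} produces $\underline\mu>0$ with $h_\infty\leq R^*$ for every $\mu\in(0,\underline\mu]$, while Lemma \ref{lem5.2} produces $\bar\mu>0$ with $h_\infty=\infty$ for $\mu>\bar\mu$. The set $\mathcal{M}:=\{\mu>0:\,h_\infty(\mu)\leq R^*\}$ is therefore nonempty and bounded above, and $\hat\mu:=\sup\mathcal{M}$ lies in $[\underline\mu,\bar\mu]$; monotonicity then gives $h_\infty(\mu)=\infty$ for $\mu>\hat\mu$ and $h_\infty(\mu)\leq R^*$ for $\mu<\hat\mu$. \emph{Stage 3 (closure at the threshold).} Show $\hat\mu\in\mathcal{M}$ by contradiction: if $h_{\hat\mu,\infty}=\infty$, then $h_{\hat\mu}(T)>R^*$ for some $T>0$, and continuous dependence of the solution on $\mu$ over $[0,T]$ (via the straightening transformation used in the proof of Theorem \ref{t5.1}) forces $h_\mu(T)>R^*$ in a left neighborhood of $\hat\mu$, contradicting $\hat\mu=\sup\mathcal{M}$.

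The main obstacle I anticipate is twofold. In the monotonicity step, the roles of $v$ invert between the two solutions: the $\mu_2$-solution serves as an upper solution for $u$ but as a lower solution for $v$, so one must match the hypotheses of Lemma \ref{lem5.4} with care; this is a bookkeeping hurdle rather than an analytical one. The more substantive difficulty is guaranteeing $\underline\mu>0$ over the entire range $h_0<R^*$: Lemma \ref{lem5.3} as stated only supplies this when $h_0<\tilde R$, while in the intermediate regime $\tilde R\leq h_0<R^*$ the single-equation majorant no longer vanishes for small $\mu$. Here one likely has to build a refined upper solution exploiting the asymptotic profile $V(r)$ of $v$, so that the effective reaction for $u$ becomes $a_1-c_1 V$, whose principal-eigenvalue threshold is precisely $R^*$; making this rigorous — in particular, propagating a lower bound $v\geq V-\varepsilon$ while $u$ is being controlled from above — is the subtlest point of the proof.
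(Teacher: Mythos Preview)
Your plan coincides with the paper's: part (i) is exactly as you describe, and for part (ii) the paper likewise invokes monotonicity of $h_\infty$ in $\mu$ via Lemma~\ref{lem5.4}, defines a threshold, and closes at the threshold by the continuous-dependence argument of Theorem~\ref{the3.2}. The only cosmetic difference is that the paper works with the complementary set $\mathcal{X}=\{\mu>0:\,h_\infty>R^*\}$ and sets $\hat\mu=\inf\mathcal{X}$, whereas you take $\hat\mu=\sup\mathcal{M}$. Your explicit verification that the $\mu_2$-solution furnishes the pair $(\bar u,\underline v,\bar h)$ in Lemma~\ref{lem5.4} for the $\mu_1$-problem is correct; the ``role inversion'' of $v$ is exactly how the coupled hypotheses of Lemma~\ref{lem5.4} are designed, so this is only a bookkeeping issue, as you suspected.

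The substantive gap you flag is real, and it is \emph{not} resolved in the paper's sketch either. The paper writes ``In terms of Lemmas~\ref{lem5.1} and~\ref{lem5.2}, we know that $\hat\mu=\inf\mathcal{X}\in(0,\infty)$,'' but Lemma~\ref{lem5.2} yields only $\hat\mu<\infty$, while Lemma~\ref{lem5.1} is the conditional $h_\infty<\infty\Rightarrow h_\infty\le R^*$ and does not by itself produce any $\mu$ with $h_\infty<\infty$. The only vanishing lemma available is Lemma~\ref{lem5.3}, which requires $h_0<\tilde R$; for $\tilde R\le h_0<R^*$ neither argument supplies $\hat\mu>0$. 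Your proposed repair---use that $h'(t)=\mu|u_r(t,h(t))|\le\mu M_0$ with $M_0$ independent of $\mu$, so that for small $\mu$ the front advances slowly enough to let $v$ relax toward $V$ on a fixed ball before $h$ reaches $R^*$, and then restart a Lemma~\ref{lem3.2}-type upper-solution construction with potential $a_1-c_1(V-\varepsilon)$, whose eigenvalue threshold is close to $R^*$---is the natural route. The delicate point, as you note, is obtaining a $\mu$-independent lower bound $v\ge V-\varepsilon$ on $[0,R^*]$ by some fixed time $T$ while still $h(T)<R^*$, since $v$ is depressed by the term $-b_2 u$ wherever $u>0$; the paper simply does not address this, so your identification of it as the crux is accurate.
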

\begin{proof} We only sketch the proof of assertion(ii), since (i) can be manifested by the same argument as in Theorem \ref{the3.2}(i).

Define $\mathcal{X}=\{\mu>0:\ h_\infty>R^*\}$. In terms of Lemmas \ref{lem5.1} and \ref{lem5.2}, we know that $\hat\mu=\inf\mathcal{X}\in(0,\infty)$. Notice that Lemma \ref{lem5.4} implies the monotonicity of $h_\infty$ with respect to $\mu$. Therefore, it follows from Lemma \ref{lem5.1} that $h_\infty<\infty$ if $\mu<\hat\mu$ and $h_\infty=\infty$ if $\mu>\hat\mu$. Same to the proof for $\mu^*\in\mathcal{X}^*$ in Theorem \ref{the3.2}, it can be derived that $\hat \mu\notin\mathcal{X}$.
\end{proof}
Then, we consider the long time behavior   for the spreading of $u$. Actually, by the argument of Theorem \ref{the4.1} with some obvious modifications we can show
\begin{theorem}\label{the5.4}
Suppose that $\underline{a}_2\underline{b}_1-\bar{a}_1\bar{b}_2>0$, $\underline{a}_1\underline{c}_2-\bar{a}_2\bar{c}_1>0$ and $(u,v,h)$ is the the unique solution of $(\ref{a1})$ with $h_\infty=\infty$. Then
\begin{eqnarray*}
\underline  u(r)\leq\liminf_{t\to\infty}u(t,r), \ \  \limsup_{t\to\infty}u(t,r)\leq\bar u(r)& {\rm\ uniformly\ on\ any\ compact\ subset\ of\ [0,\infty)},\\[1mm]
\underline v(r)\leq\liminf_{t\to\infty}v(t,r),\ \  \limsup_{t\to\infty}v(t,r)\leq\bar v(r)& {\rm\ uniformly\ on\ any\ compact\ subset\ of\ [0,\infty)},
\end{eqnarray*}
where $\underline u(r)$, $\bar u(r)$, $\underline v(r)$ and $\bar v(r)$ are given in the proof of Theorem $\ref{theor4.2}$.
\end{theorem}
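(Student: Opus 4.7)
The plan is to mirror the two-step argument that established Theorem \ref{the4.1}, with the only genuine adjustment coming from the fact that in problem (\ref{a1}) the species $v$ is defined on all of $[0,\infty)$ while $u$ still lives on the free-boundary region $[0,h(t)]$ and is extended by zero outside. As in Theorem \ref{the4.1}, the argument splits cleanly into upper bounds (obtained by comparison with the decoupled logistic problems) and lower bounds (obtained via truncated problems with $\varepsilon r^\rho$ perturbations, followed by letting $\ell\to\infty$ and $\varepsilon\to 0$).

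For Step 1, the upper bound for $u$ goes exactly as in Theorem \ref{the4.1}: since $u_t-d_1\Delta u\leq u(a_1(r)-b_1(r)u)$ on $[0,h(t)]$, I would compare $u$ with the unique solution $\tilde u$ of the half-line logistic problem with the initial data $u_0$ extended by zero outside $[0,h_0]$, and invoke Theorem \ref{the4.2} to conclude $\tilde u(t,r)\to \bar u(r)$ locally uniformly on $[0,\infty)$; the hypothesis $h_\infty=\infty$ then gives $\limsup_{t\to\infty}u(t,r)\leq\bar u(r)$ uniformly on compacts. The upper bound for $v$ is even simpler and is the only place where problem (\ref{a1}) differs substantively from (\ref{1.1}): since $v_t-d_2\Delta v\leq v(a_2(r)-c_2(r)v)$ now holds on the whole half-line, a direct comparison with the decoupled logistic solution $\tilde v$ on $[0,\infty)$ with the same initial and Neumann data combined with Theorem \ref{the4.2} yields $\limsup_{t\to\infty}v(t,r)\leq\bar v(r)$ locally uniformly; no free-boundary gymnastics are needed.

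For Step 2, I would copy the lower-bound construction of Theorem \ref{the4.1} verbatim on each side. Given large $\ell$ and small $\varepsilon>0$, the just-established upper bound on $u$ together with $h_\infty=\infty$ produces some $T$ with $h(t)>\ell$ and $u(t,r)<\bar u(r)+\varepsilon r^\rho$ on $[0,\ell]$ for all $t\geq T$. Comparing $v$ from below with the truncated parabolic problem on $B_\ell$ carrying coefficient $a_2(r)-c_2(r)v-b_2(r)(\bar u(r)+\varepsilon r^\rho)$, started from a small multiple $\delta v^0_\ell(r)$ of the associated stationary Dirichlet solution, gives a time-monotone family whose limit is $v^0_\ell(r)$ uniformly on $[0,\ell]$. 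Sending $\ell\to\infty$ recovers the unique positive solution $v_\varepsilon$ of the full-space elliptic problem with source $a_2-b_2(\bar u+\varepsilon r^\rho)$, and Proposition \ref{pr4.4} then gives $v_\varepsilon\to\underline v$ locally uniformly as $\varepsilon\to 0$. The symmetric argument for $u$ uses the upper bound on $v$ and is carried out on $B_\ell$ with $\ell<h(T)$ so that the Dirichlet cap $w(t,\ell)=0$ lies below the trace of $u$, which is positive there; Proposition \ref{pr4.4} again delivers $\underline u$ in the limit.

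There is no serious analytic obstacle here; the author's own phrase ``obvious modifications'' is accurate. The only bookkeeping I would be careful about is ensuring that the hypotheses $\underline a_2\underline b_1-\bar a_1\bar b_2>0$ and $\underline a_1\underline c_2-\bar a_2\bar c_1>0$ are used exactly where needed, namely to guarantee that the functions $a_2(r)-b_2(r)(\bar u(r)+\varepsilon r^\rho)$ and $a_1(r)-c_1(r)(\bar v(r)+\varepsilon r^\rho)$ still satisfy the hypothesis (\ref{4.2}) of Proposition \ref{pr4.1} for all sufficiently small $\varepsilon$, so that the comparison solutions $v_\varepsilon$, $u_\varepsilon$ exist and Proposition \ref{pr4.4} applies to send them to $\underline v$, $\underline u$, respectively. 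Once these coefficient-positivity checks are in place, the four limit inequalities follow immediately.
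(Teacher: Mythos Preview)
Your proposal is correct and matches the paper's intended approach exactly: the paper gives no detailed proof of Theorem \ref{the5.4} at all, merely stating that it follows ``by the argument of Theorem \ref{the4.1} with some obvious modifications,'' and your two-step outline is precisely the fleshing-out of those modifications. The one substantive difference you identify---that the upper bound for $v$ in problem (\ref{a1}) is obtained by a direct comparison on all of $[0,\infty)$ since $v$ already lives there, without needing to extend by zero or pass through the free boundary---is exactly the ``obvious modification'' the authors had in mind, and your remarks about where the sign conditions $\underline a_2\underline b_1-\bar a_1\bar b_2>0$ and $\underline a_1\underline c_2-\bar a_2\bar c_1>0$ enter are accurate.
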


Except for these above results corresponding to problem (\ref{1.1}) discussed in Sections \ref{sec2}-\ref{sec4},  we also obtain the asymptotic spreading speed of the free boundary $h(t)$ for problem (\ref{a1}) when $\rho$ is restricted to 0, i.e.,
\begin{equation}\label{e5}
\underline{a}_i=\liminf_{r\to\infty}{a_i(r)},\ \ \ \bar{a}_i=\limsup_{r\to\infty}{a_i(r)}.
\end{equation}

Let us first state the following known consequence, which plays an important role in later discussion. One can find the proof in \cite[Proposition 2.1]{BDK}.
\begin{proposition}
For any given positive constants $a,b,d$ and $k\in [0,2\sqrt{ad})$, the problem
$$-dw''+kw'=aw-bw^2\ \ {\rm in}\ 0< r< \infty,\ \ w(0)=0$$
admits a unique positive solution $w=w_k=w_{a,b,d,k}$, which satisfies $w(r)\to {\frac{a}{b}}$ as $r\to \infty.$ Moreover, $w_k'(r)>0$ for all $r\geq0$, $w_{k_1}'(r)>w_{k_2}'(r)$ for any $r>0$ and $k_1<k_2$, and for each $\mu>0$, there exists a unique $k_0=k_0(\mu, a,b,d)\in [0,2\sqrt{ad})$ such that $\mu w_{k_0}'(0)=k_0$. Furthermore, $$\lim_{\frac{\mu a}{bd}\to\infty}\frac{k_0}{\sqrt{ad}}=2,\ \ \ \lim_{\frac{\mu a}{bd}\to0}\frac{k_0}{\sqrt{ad}}\frac{bd}{\mu a}=\frac{1}{\sqrt 3}. $$
\end{proposition}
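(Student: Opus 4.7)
The whole proposition lives in the phase plane of the autonomous system $w' = p$, $d\,p' = kp - aw + bw^2$, whose equilibria are $E_0 = (0,0)$ and $E_1 = (a/b, 0)$. At $E_1$ the eigenvalues $\lambda_\pm(k) = (k \pm \sqrt{k^2+4ad})/(2d)$ have opposite signs, so $E_1$ is a saddle with a one-dimensional stable manifold; at $E_0$ the eigenvalues $(k \pm \sqrt{k^2 - 4ad})/(2d)$ are complex with non-negative real part since $k \in [0, 2\sqrt{ad})$, so $E_0$ is a center (when $k=0$) or an unstable spiral (when $k>0$). My plan is to define $w_k$ as the branch of the stable manifold of $E_1$ that enters the strip $\{0 < w < a/b,\ p > 0\}$, traversed backward in $r$. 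The Lyapunov-type function $H(w,p) := \tfrac{d}{2}p^2 + \tfrac{a}{2}w^2 - \tfrac{b}{3}w^3$ satisfies $\tfrac{d}{dr}H = k p^2 \geq 0$ along trajectories; since $H(E_1) = a^3/(6b^2)$, this confines the backward orbit to $\{H \leq a^3/(6b^2)\}\cap\{0\leq w\leq a/b\}$ and, because $E_0$ is only backward-attracting through spiraling motion, forces the orbit to cross $\{w=0\}$ transversally at some finite $r$. Translating that crossing to $r=0$ gives $w_k(0)=0$, $w_k'(0)=:\alpha_k>0$, and $w_k(r)\to a/b$ as $r\to\infty$. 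Uniqueness is inherited from uniqueness of the stable manifold, and $w_k'(r)>0$ on $[0,\infty)$ follows because an interior critical point in $(0,a/b)$ would be a strict local maximum ($w_k''<0$ there), forcing the orbit into $\{p<0\}$ and missing $E_1$.

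For monotonicity in $k$ I would parameterize the orbit by $w$, writing $p = P_k(w)$ on $[0, a/b]$; then $d\, P_k P_k' = k P_k - (aw - bw^2)$ with $P_k(a/b) = 0$, and near $w = a/b$ one has $P_k(w) \sim \lambda_-(k)(w - a/b)$. The direct calculation $\lambda_-'(k) = (1 - k/\sqrt{k^2+4ad})/(2d) > 0$ shows $P_{k_1} > P_{k_2}$ close to $w=a/b$ when $k_1 < k_2$. At any putative intersection $w_0$, dividing the ODE by $P_k$ yields $(P_{k_1}' - P_{k_2}')(w_0) = (k_1 - k_2)/d < 0$; choosing $w_0$ to be the largest intersection in $[0, a/b)$ contradicts the order $P_{k_1} > P_{k_2}$ on $(w_0, a/b)$, so $P_{k_1}(w) > P_{k_2}(w)$ throughout $[0, a/b)$. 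Evaluating at $w = 0$ gives $w_{k_1}'(0) > w_{k_2}'(0)$. The pointwise statement $w_{k_1}'(r) > w_{k_2}'(r)$ for $r>0$ then follows by a standard comparison between the two profiles issuing from the common value $w(0)=0$: the difference $W := w_{k_1} - w_{k_2}$ satisfies a linear equation whose forcing $(k_2 - k_1)w_{k_2}'$ has a sign compatible with propagating the strict inequality $W'(0)>0$.

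The constant $k_0$ is then extracted from $F(k) := k - \mu w_k'(0) = k - \mu P_k(0)$ on $[0, 2\sqrt{ad})$. Continuous dependence of the stable manifold gives continuity of $F$, the previous step gives strict monotonicity, integrating $H' \geq 0$ from $r=0$ to $r=\infty$ yields the uniform bound $P_k(0)^2 \leq a^3/(3b^2 d)$, so $F(k) \to 2\sqrt{ad}$ as $k \to 2\sqrt{ad}^-$; combined with $F(0) = -\mu P_0(0) < 0$, the intermediate value theorem produces a unique $k_0$. For the asymptotics I would rescale $r = \sqrt{d/a}\,\rho$, $w = (a/b)\,\phi(\rho)$ to reach the normalized profile problem $-\phi'' + \tilde k \phi' = \phi(1 - \phi)$, $\phi(0) = 0$, $\phi(\infty) = 1$, with $\tilde k := k/\sqrt{ad} \in [0, 2)$, in which the boundary relation becomes $\tilde k_0 = \gamma\,\phi_{\tilde k_0}'(0)$ with $\gamma := \mu a/(bd)$. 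As $\gamma \to 0$, $\tilde k_0 \to 0$ and $\phi'(0)$ tends to its $\tilde k=0$ value, computed from the first integral $\tfrac12(\phi')^2 + \tfrac12\phi^2 - \tfrac13\phi^3 \equiv \tfrac16$ to be $1/\sqrt{3}$, giving the second stated limit. As $\gamma \to \infty$, continuity and strict positivity of $\phi_{\tilde k}'(0)$ on every compact $[0, 2-\delta]$ prevent $\tilde k_0$ from staying in $[0, 2-\delta]$, so $\tilde k_0 \to 2$, i.e., $k_0/\sqrt{ad} \to 2$.

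The hard step will be the $k$-monotonicity of $w_k'(0)$: a naive maximum-principle comparison applied directly to the second-order ODE is obstructed because the forcing $(k_1 - k_2)w'$ has the wrong sign, which is precisely why moving to the first-order $P_k$-formulation (the orbit in the phase plane) is the natural route, as there the obstruction evaporates.
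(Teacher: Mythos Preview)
The paper gives no proof of this proposition; it is quoted verbatim from \cite[Proposition~2.1]{BDK}. Your phase-plane construction of $w_k$ as the branch of the stable manifold of $E_1$, the Lyapunov function $H$, and the first-order reduction $p=P_k(w)$ constitute the standard route used in that reference; the existence, uniqueness, positivity of $w_k'$, and the monotonicity $P_{k_1}(0)>P_{k_2}(0)$ for $k_1<k_2$ are handled correctly.

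There are, however, two genuine problems.

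First, the assertion $w_{k_1}'(r)>w_{k_2}'(r)$ for \emph{every} $r>0$ is false, and your ``standard comparison'' for it cannot be made to work. Your own argument shows $w_{k_1}(r)>w_{k_2}(r)$ for all $r>0$; hence $W:=w_{k_1}-w_{k_2}$ satisfies $W(0)=0$, $W>0$ on $(0,\infty)$, and $W(r)\to 0$ as $r\to\infty$, so $W$ attains an interior maximum at some $r_0>0$, where $W'(r_0)=0$, i.e.\ $w_{k_1}'(r_0)=w_{k_2}'(r_0)$. (The tail asymptotics $w_{k_i}'(r)\sim|\lambda_-(k_i)|\,C_i\,e^{\lambda_-(k_i)r}$ with $\lambda_-(k_1)<\lambda_-(k_2)<0$ show in fact $w_{k_1}'(r)<w_{k_2}'(r)$ for all large $r$.) The clause ``for any $r>0$'' in the paper is a mis-transcription of \cite{BDK}; what is proved there, and the only thing used downstream (only $k\mapsto w_k'(0)$ enters the definition of $k_0$), is the inequality at $r=0$, which your $P_k$-argument already delivers.

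Second, your existence argument for $k_0$ has a gap when $\mu$ is large. The energy bound $P_k(0)^2\le a^3/(3b^2d)$ yields only $F(k)\ge k-\mu\sqrt{a^3/(3b^2d)}$, which need not become positive on $[0,2\sqrt{ad})$ once $\mu a/(bd)>2\sqrt{3}$, so the intermediate value theorem is not yet available. What is actually needed is $P_k(0)\to 0$ as $k\uparrow 2\sqrt{ad}$; this follows from continuous dependence of the stable manifold together with the fact that at $k=2\sqrt{ad}$ the stable manifold of $E_1$ becomes the monotone KPP heteroclinic $E_0\to E_1$, whose intersection with $\{w=0\}$ is $E_0$ itself. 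With $P_k(0)\to 0$ in hand, $F(k)\to 2\sqrt{ad}>0$, your IVT argument works for every $\mu>0$, and your asymptotic argument for $\gamma\to\infty$ is no longer circular.
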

Taking advantage of the function $k_0(\mu, a,b,d)$, we can derive the following estimates for the asymptotic spreading speed of $h(t)$.
\begin{theorem}
Suppose that $\underline{a}_1\underline{c}_2-\bar{a}_2\bar{c}_1>0$ and $h_\infty=\infty$. Then
$$k_0(\mu, \underline{a}_1-\bar{a}_2\bar{c}_1/\underline{c}_2,\bar{b}_1,d_1)\leq \liminf_{t\to\infty}\frac{h(t)}{t} \leq\limsup_{t\to\infty}\frac{h(t)}{t} \leq k_0(\mu, \bar{a}_1,\underline{b}_1,d_1). $$
\end{theorem}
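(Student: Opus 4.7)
My plan is to sandwich $h(t)$ between two functions moving asymptotically at speeds $k_0(\mu,\bar a_1,\underline b_1,d_1)$ from above and $k_0(\mu,\underline a_1-\bar a_2\bar c_1/\underline c_2,\bar b_1,d_1)$ from below. The key input is the semi-wave $w_k=w_{a,b,d,k}$ supplied by the preceding proposition: for each choice of coefficients one constructs a solution of the form $U(t,r)=w_{k_0}(k_0 t+L-r)$ on a moving interval whose right endpoint travels at speed $k_0$, and the identity $\mu w_{k_0}'(0)=k_0$ is precisely what makes the Stefan-type free boundary condition compatible. Because the free boundary condition in (\ref{a1}) depends only on $u_r$, and because $u$ is controlled above by a pure logistic reaction and below (eventually) by one with the competition penalty, the two one-species bounds decouple and the $v$-component enters only through its asymptotic size.

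For the upper bound, I would first observe that the positivity of $b_1,c_1,v$ gives
\[
u_t-d_1\Delta u\le u\bigl(\bar a_1-\underline b_1 u\bigr),\qquad 0<t,\ 0\le r<h(t).
\]
Fix $\varepsilon>0$ and let $k^+=k_0(\mu,\bar a_1+\varepsilon,\underline b_1,d_1)$, $w^+=w_{\bar a_1+\varepsilon,\underline b_1,d_1,k^+}$. For $L$ large enough so that $w^+(L-r)\ge u_0(r)$ on $[0,h_0]$ and $L\ge h_0$, define $\bar h(t)=k^+ t+L$ and $\bar u(t,r)=w^+(\bar h(t)-r)$. A direct computation (using the radial Laplacian $\Delta=\partial_r^2+\frac{N-1}{r}\partial_r$ and the fact that $w^+$ is increasing, so the lower order term has the correct sign once $\bar h$ is large) shows that $\bar u$ is an upper solution to the equation for $u$, and $\bar h'(t)=k^+=\mu w^{+\,\prime}(0)=-\mu \bar u_r(t,\bar h(t))$. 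Comparison (the single-equation Stefan comparison, essentially Lemma \ref{lem5.4} with $v$ replaced by $0$ in the upper position and using the monotonicity of the $u$-equation in $v$) yields $h(t)\le\bar h(t)$, and sending $\varepsilon\to 0$ together with continuity of $k_0$ in its parameters gives $\limsup_{t\to\infty}h(t)/t\le k_0(\mu,\bar a_1,\underline b_1,d_1)$.

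For the lower bound, I would first use the argument of Theorem \ref{the4.2} applied to the $v$-equation, together with the comparison that $v$ is dominated by the solution of $v_t-d_2\Delta v=v(a_2(r)-c_2(r)v)$, to conclude $\limsup_{t\to\infty}v(t,r)\le \bar a_2/\underline c_2$ uniformly on compacts (and by a standard barrier in $r$, a uniform estimate $v\le \bar a_2/\underline c_2+\varepsilon$ for $t\ge T_\varepsilon$ on all of $[0,\infty)$). Consequently, for $t\ge T_\varepsilon$ and $0\le r<h(t)$,
\[
u_t-d_1\Delta u\ge u\bigl(\underline a_1-\bar a_2\bar c_1/\underline c_2-\varepsilon-\bar b_1 u\bigr).
\]
Letting $k^-=k_0(\mu,\underline a_1-\bar a_2\bar c_1/\underline c_2-\varepsilon,\bar b_1,d_1)$ and $w^-$ the corresponding semi-wave, I would construct a moving sub-region $\underline h(t)=k^-(t-T_\varepsilon)+\ell$ (with $\ell$ small) and a compactly supported lower solution of the form $\underline u(t,r)=\delta\,w^-(\underline h(t)-r)_+$ (after a truncation near the right endpoint to ensure $\underline u(t,\underline h(t))=0$; this is the standard one-sided traveling-wave sub-solution used in \cite{DL2} and related papers). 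Provided $h(T_\varepsilon)>\ell$, which holds because $h_\infty=\infty$, comparison gives $h(t)\ge\underline h(t)$, so $\liminf h(t)/t\ge k^-$, and $\varepsilon\to 0$ finishes.

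The main obstacle, in my view, is the lower bound: unlike the one-species free boundary problem, here the semi-wave for the $u$-equation is only a subsolution asymptotically (once $v$ has settled near its upper envelope), so the sub-solution must be launched at a time $T_\varepsilon$ where $u(T_\varepsilon,\cdot)$ is still positive on a prescribed interval and $h(T_\varepsilon)$ is large enough to contain the initial support of the constructed subsolution. This requires a preliminary ``ignition'' step: using $h_\infty=\infty$ together with the uniform lower bound $u\ge\underline u(r)$ from Theorem \ref{the5.4} to ensure a quantitative positive lower bound for $u(T_\varepsilon,\cdot)$ on $[0,\ell]$, so that $\delta>0$ can indeed be chosen to make $\underline u(T_\varepsilon,\cdot)\le u(T_\varepsilon,\cdot)$. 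The $N$-dimensional radial Laplacian's drift term $\tfrac{N-1}{r}u_r$ is favorable for the upper estimate (it makes the true equation ``more dissipative'' than the 1D wave equation) and unfavorable for the lower estimate, but since $\underline h(t)\to\infty$ the drift term decays like $1/r$ and can be absorbed into the $\varepsilon$ perturbation.
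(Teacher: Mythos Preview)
Your overall strategy---sandwich $(u,h)$ between two single-species logistic free boundary problems and read off the speeds from the semi-wave proposition---is exactly what the paper does. The gap is in how you pass to constant coefficients. You write $u_t-d_1\Delta u\le u(\bar a_1-\underline b_1 u)$ and later claim a uniform estimate $v\le\bar a_2/\underline c_2+\varepsilon$ on all of $[0,\infty)$; but here $\bar a_i,\underline a_i,\bar c_i,\underline c_i$ are defined via (\ref{e5}) as $\limsup$ and $\liminf$ at infinity (and via Hypothesis ({\bf H}) for $b_i,c_i$), not as global extrema of $a_i$. So $a_1(r)$ may exceed $\bar a_1+\varepsilon$ on a bounded set, and $\limsup_{t\to\infty}v(t,r)$ is only controlled by $\bar v(r)$ (Theorem \ref{the5.4}) on compacts, which need not lie below $\bar a_2/\underline c_2$. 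Consequently your displayed differential inequalities for $u$ fail for small $r$, and the semi-wave super/sub-solutions $w^{\pm}(\bar h(t)-r)$ do not verify the required inequalities on the full interval $[0,\bar h(t)]$. (A secondary issue: the sub-solution $\delta\,w^-(\underline h(t)-r)_+$ does not satisfy the nonlinear equation after scaling by $\delta$, and the ``truncation near the right endpoint'' is misplaced---$w^-(0)=0$ already; it is the left end that needs cutting.)

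The paper sidesteps all of this by never replacing the coefficients by constants in the comparison step. It compares $(u,h)$ directly with the solutions of the \emph{variable-coefficient} single-species Stefan problems $\bar u_t-d_1\Delta\bar u=\bar u(a_1(r)-b_1(r)\bar u)$ for the upper bound and $\underline u_t-d_1\Delta\underline u=\underline u\bigl[a_1(r)-c_1(r)(v^*(r)+\varepsilon)-b_1(r)\underline u\bigr]$ for the lower bound, where $v^*\in C^\gamma$ is built so that $v^*(r)=\bar a_2/\underline c_2$ only for $r\ge R$, while on $[0,R]$ it absorbs the compact-set bound $\bar v(r)$. The asymptotic speeds of these heterogeneous one-species problems are then quoted from \cite{ZX}, whose proof already contains the semi-wave construction restricted to large $r$ that your direct approach would need. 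Your argument can be repaired along these lines---build the traveling super/sub-solutions on $[R,\bar h(t)]$ only and handle the inner region separately---but as written the comparison breaks down near the origin.
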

\begin{proof}
Because
\begin{eqnarray*}
\left\{
\begin{array}{ll}
u_t-d_1\Delta u=u(a_1(r)-b_1(r)u-c_1(r)v)\leq u(a_1(r)-b_1(r)u),&t>0,\ 0\leq r< h(t),\\[1mm]
u_r(t,0)=0,\ u(t,h(t))=0,&t>0,\\[1mm]
h'(t)=-\mu u_r(t,h(t)),&t>0,\\[1mm]
u(0,r)=u_0(r),&0\leq r\leq h_0.
\end{array}
\right.
\end{eqnarray*}
This indicates that $(u,h)$ is a lower solution to the problem
\begin{eqnarray*}
\left\{
\begin{array}{ll}
\bar u_t-d_1\Delta \bar u= \bar u(a_1(r)-b_1(r)\bar u),&t>0,\ 0\leq r< \bar h(t),\\[1mm]
\bar u_r(t,0)=0,\ \bar u(t,\bar h(t))=0,&t>0,\\[1mm]
\bar h'(t)=-\mu \bar u_r(t,\bar h(t)),&t>0,\\[1mm]
\bar u(0,r)=u_0(r),\ \bar h_0=h_0,&0\leq r\leq \bar h_0.
\end{array}
\right.
\end{eqnarray*}
By means of the comparison principle, it is easy to obtain that $\bar h(t)\geq h(t)$ as $t\to\infty$. A similar argument as in \cite[Theorem 6.1]{ZX} gives rise to
$$\lim_{t\to\infty}\frac{\bar h(t)}{t}=k_0(\mu, \bar{a}_1,\underline{b}_1,d_1).$$
And it then follows that
$$\limsup_{t\to\infty}\frac{h(t)}{t} \leq k_0(\mu, \bar{a}_1,\underline{b}_1,d_1).$$

Next, by constructing a suitable lower solution, we want to show
$$\liminf_{t\to\infty}\frac{h(t)}{t}\geq k_0(\mu, \underline{a}_1-\bar{a}_2\bar{c}_1/\underline{c}_2,\bar{b}_1,d_1).$$
Note  (\ref{e5}) and the assumption ({\bf H}). So, for any $\varepsilon'>0$, there exists $R'=R'(\varepsilon')>0$ such that for any $r\geq R'$,
$$\underline{a}_i-\varepsilon'\leq a_i(r)\leq \bar{a}_i+\varepsilon',\ \ \ \underline{c}_i-\varepsilon'\leq c_i(r)\leq \bar{c}_i+\varepsilon'.$$
Hence it is not difficult to show that
$$\limsup_{t\to\infty}v(t,r)\leq \frac{\bar{a}_2+\varepsilon'}{\underline{c}_2-\varepsilon'}\ \ {\rm uniformly\ for}\ r\in [R',\infty).$$
By virtue of Theorem \ref{the5.4}, we know that $\limsup_{t\to\infty}v(t,r)\leq\bar v(r)$ uniformly on any compact subset of $[0,\infty).$ Thanks to $h_\infty=\infty$, for any given $0<\varepsilon\ll1$, there exists $ R\gg1$, $T=T(\varepsilon)>0$ and positive function $v^*(r)\in C^\gamma([0,\infty))$
such that
\begin{eqnarray*}
 &v(t,r)\leq  v^*(r)+\varepsilon\ \  {\rm for}\  t\geq T{\rm \ and}\ 0\leq r<\infty,&  \\
  &v^*(r)={\bar{a}_2}/{\underline{c}_2}\ \ {\rm for}\   R\leq r<\infty,\  {\rm and}\ h(T)> R.&
\end{eqnarray*}

Consider the following auxiliary problem
\begin{eqnarray*}
\left\{
\begin{array}{ll}
\underline u_t-d_1\Delta \underline u= \underline u[a_1(r)-c_1(r)(v^*(r)+\varepsilon)-b_1(r)\underline u],&t>T,\ 0\leq r< \underline h(t),\\[1mm]
\underline u_r(t,0)=0,\ \underline u(t,\underline h(t))=0,&t>T,\\[1mm]
\underline h'(t)=-\mu \underline u_r(t,\underline h(t)),&t>T,\\[1mm]
\underline  u(T,r)=u(T,r),&0\leq r\leq  h(T).
\end{array}
\right.
\end{eqnarray*}
Apparently, $(u,h)$ is an upper solution of the above problem, and
$$\liminf_{r\to\infty}[a_1(r)-c_1(r)(v^*(r)+\varepsilon)]\geq \underline{a}_1-\bar{c}_1(\bar{a}_2/\underline{c}_2+\varepsilon).$$
Again making use of the argument of \cite{ZX} brings about
$$\lim_{t\to\infty}\frac{\underline h(t)}{t}=k_0(\mu, \underline{a}_1-\bar{c}_1(\bar{a}_2/\underline{c}_2+\varepsilon),\bar{b}_1,d_1)$$
which implies that
$$\liminf_{t\to\infty}\frac{h(t)}{t} \geq k_0(\mu, \underline{a}_1-\bar{c}_1(\bar{a}_2/\underline{c}_2+\varepsilon),\bar{b}_1,d_1).$$
Due to the arbitrariness of $\varepsilon$, the desired result can be derived immediately.
\end{proof}

\section{Discussion}

In this article we have studied the dynamical behavior of the two competing species $u(t,|x|)$ and $v(t,|x|)$ with expanding front $\{|x|=h(t)\}$ determined by $h'(t)=-\mu [u_x(t,h(t))+\beta v_x(t,h(t))]$, i.e., (\ref{1.1}), and also the dynamical behavior of the new competitor $u(t,|x|)$ invading into the native species $v(t,|x|)$ with expanding front $\{|x|=h(t)\}$ determined by $h'(t)=-\mu u_x(t,h(t))$, i.e., (\ref{a1}). We suppose that these species exist in a heterogeneous environment, especially, the variable intrinsic growth rate $a_i(|x|)$ (i=1,2) may be ``very negative'' in the sense that both $\int_{\mathbb{R}^N}a_i(|x|)dx=-\infty$ and  $|\{x : a_i(|x|)>0\}|\ll|\{x: a_i(|x|)<0\}|$ are allowed (see {\bf (A2)}),  where $|A|$ denotes the measure of $A$. That is,  the results in \cite{GW,WZh1,DL2} are extended to the more realistic environment.

From the above discussion we have realized that the number $h^*$ satisfying either $\lambda_1(h^*;a_1,d_1)=0$ or $\lambda_1(h^*;a_2,d_2)=0$ is crucial to problem (\ref{1.1}); To (\ref{a1}) the counterpart is $R^*$. Let $l^*=h^*$, $\nu_*=\mu_*$ and $\nu^*=\mu^*$ for the former problem, and $l^*=R^*$ and $\nu_*=\nu^*=\hat\mu$ for the latter.
We have proved that

(i) If the expanding radius of initial habitat is less than $l^*$ and the moving parameter $\mu$ of the expanding front is less than $\nu_*$, then $h_\infty<l^*$. Moveover,

(ia) for problem (\ref{1.1}), $\lim_{t\to\infty}\|u(t,\cdot)\|_{C([0,h(t)])}=\lim_{t\to\infty}\|v(t,\cdot)\|_{C([0,h(t)])}=0$;

(ib) for problem (\ref{a1}), $\lim_{t\to\infty}\|u(t,\cdot)\|_{C([0,h(t)])}=0$
and
$\lim_{t\to\infty}v(t,r)=V(r)$ uniformly on any compact subset of $[0,\infty)$,
where $V(r)$ is the only positive solution of the problem $-d_2\Delta v=v(a_2(r)- c_2(r)v)\ \ {\rm in \ }\mathbb{R}^N$.

(ii) If the expanding radius of initial habitat is not less than $l^*$, or it is less than $l^*$ but the moving parameter $\mu$ of the expanding front is greater than $\nu^*$, then $h_\infty=\infty$. Moreover,
if $\underline{a}_2\underline{b}_1-\bar{a}_1\bar{b}_2>0$, $\underline{a}_1\underline{c}_2-\bar{a}_2\bar{c}_1>0$, then $u(t,r)$ and $v(t,r)$ satisfy
$$\underline  u(r)\leq\liminf_{t\to\infty}u(t,r),\ \limsup_{t\to\infty}u(t,r)\leq\bar u(r),\ \  \underline v(r)\leq\liminf_{t\to\infty}v(t,r),\  \limsup_{t\to\infty}v(t,r)\leq\bar v(r)$$
uniformly on any compact subset of $[0,\infty)$, where $\underline u(r)$, $\bar u(r)$, $\underline v(r)$ and $\bar v(r)$ are given in the proof of Theorem $\ref{theor4.2}$.

Our conclusions not only provide the sufficient conditions for species spreading success and spreading failure, but also provide the long time behavior of $(u(t,r),v(t,r))$. If the expanding radius of initial habitat is small, and the moving parameter is sufficiently small, it turns out that no population can survive eventually for (\ref{1.1}), and no new competitor $u(t,r)$  can survive for (\ref{a1}). On the other hand, If the expanding radius of initial habitat or the moving parameter is large enough, regardless of initial population size, then the expanding domain inevitably becomes the whole existing space. The phenomenon suggests that the expanding radius of initial habitat and the moving parameter are important to the survival for the species. The better way to reduce the moving parameter may be to control the surrounding environment.

These theoretical results may be helpful in the prediction and prevention of biological invasions.

\end{document}